\def\bint{{\ifinner\rlap{\bf\kern.30em--}
\int\else\rlap{\bf\kern.35em--}\int\fi}\ignorespaces}
\def\sbint{{\ifinner\rlap{\bf\kern.32em--}
\hspace{0.078cm}\int\else\rlap{\bf\kern.45em--}\int\fi}\ignorespaces}
\def\red{\color{red}}
\def\rr{\mathbb{R}}
\def\rn{\mathbb{R}^n}
\def\nn{\mathbb{N}}
\def\zz{\mathbb{Z}}
\def\zn{\mathbb{Z}^n}
\def\lz{\lambda}
\def\dz{\delta}
\def\bz{\beta}
\def\vz{\varphi}
\def\tz{\theta}
\def\fg{\lceil\gamma\rceil}
\def\wz{\widetilde}
\def\ls{\lesssim}
\def\fz{\infty}
\def\az{\alpha}
\def\esup{\mathop\mathrm{\,ess\,sup\,}}
\def\einf{\mathop\mathrm{\,ess\,inf\,}}
\def\ca{{\mathcal A}}
\def\cb{{\mathcal B}}
\def\cf{{\mathcal F}}
\def\ck{{\mathcal K}}
\def\cm{{\mathcal M}}
\def\cq{{\mathcal Q}}
\def\cs{{\mathcal S}}
\def\lp{{L^p(\rn)}}
\def\r{\right}
\def\lf{\left}
\def\hs{\hspace{0.3cm}}
\def\r{\right}
\def\lf{\left}
\def\supp{{\mathop\mathrm{\,supp\,}}}
\def\loc{{\mathop\mathrm{\,loc\,}}}
\def\eqref#1{(\ref{#1})}
\def\func#1{\,\mathop{\mathrm{#1}}\,}
\def\supp{\func{supp}}
\newtheorem{theorem}{Theorem}[section]
\newtheorem{lemma}[theorem]{Lemma}
\newtheorem{corollary}[theorem]{Corollary}
\newtheorem{proposition}[theorem]{Proposition}
\theoremstyle{definition}
\newtheorem{remark}[theorem]{Remark}
\newtheorem{definition}[theorem]{Definition}
\newtheorem{assumption}[theorem]{Assumption}
\numberwithin{equation}{section}
\begin{document}

\title{\bf\Large Applications of
Hardy Spaces Associated with Ball Quasi-Banach Function Spaces
\footnotetext{\hspace{-0.35cm} 2010 {\it Mathematics Subject
Classification}. {Primary 42B30; Secondary 42B35, 42B25, 42B20, 47G30.}
\endgraf {\it Key words and phrases}. ball quasi-Banach
function space, Hardy space, $g$-function,
$g^\ast_\lambda$-function, atom, Calder\'on--Zygmund operator, pseudo-differential operator.
\endgraf This work is supported by the
National Natural Science Foundation  of China (Grant Nos.  11971058,
11761131002, 11671185, 11871254 and 11571289).}}
\date{ }
\author{Fan Wang, Dachun Yang\,\footnote{Corresponding author/{\red November 1,
2019}/Final version.}\, \ and Sibei Yang}
\maketitle

\vspace{-0.8cm}

\begin{center}
\begin{minipage}{13cm}
{\small {\bf Abstract}\quad Let $X$ be a ball quasi-Banach function space
satisfying some minor assumptions.
In this article, the authors establish the characterizations of $H_X(\mathbb{R}^n)$,
the Hardy space associated with $X$,
via the Littlewood--Paley $g$-functions and $g_\lambda^\ast$-functions.
Moreover, the authors obtain the boundedness of Calder\'on--Zygmund operators
on $H_X(\mathbb{R}^n)$. For the local Hardy-type space $h_X(\mathbb{R}^n)$ associated with $X$,
the authors also obtain the boundedness of $S^0_{1,0}(\mathbb{R}^n)$ pseudo-differential operators
on $h_X(\mathbb{R}^n)$ via first establishing the atomic characterization
of $h_X(\mathbb{R}^n)$. Furthermore, the
characterizations of $h_X(\mathbb{R}^n)$ by means of local molecules and
local Littlewood--Paley functions are also given.
The results obtained in this article
have a wide range of generality and can be applied to
the classical Hardy space, the weighted Hardy space,
the Herz--Hardy space, the Lorentz--Hardy space,
the Morrey--Hardy space, the variable Hardy space,
the Orlicz-slice Hardy space and their local versions.
Some special cases of these applications are even new and, particularly, in the case of
the variable Hardy space, the $g_\lambda^\ast$-function characterization obtained in this
article improves the known results via widening the range of $\lambda$.}
\end{minipage}
\end{center}

\vspace{0.3cm}

\section{Introduction}

It is well known that both the real-variable theory of function spaces and
the boundedness of operators are always two important topics of harmonic analysis.
Recall that the Lebesgue space $L^p(\rn)$ with $p\in(0,\fz)$ is defined by setting
$$L^p(\rn):=\left\{f \ \text{is measurable on} \ \rn :\
\|f\|_{\lp}:=\left[\int_{\rn}|f(x)|^p\,dx\right]^{1/p}<\infty\right\},$$
which is the most basic function space. When $p\in(1,\fz)$, the space $L^p(\rn)$ is
very useful in the study of the boundedness of operators.
However, when $p\in(0,1]$, the space $L^p(\rn)$ is not appropriate when considering
the boundedness of some operators; for instance,
the Riesz transform is not bounded on $L^1(\rn)$.

As natural generalizations and substitutes of $L^p(\rn)$ with $p\in(0,1]$,
a real-variable theory of classical Hardy spaces $H^p(\rn)$
was originally initiated  by Stein and Weiss \cite{sw} and then systematically developed by Fefferman and Stein \cite{fs}.
These celebrated articles \cite{sw} and \cite{fs} inspire many
new ideas for the real-variable theory of function spaces.
For instance, the characterizations of classical Hardy spaces reveal the important connections
among various notions in harmonic analysis, such as harmonic functions,
various maximal functions and various square functions.

On another hand, due to the need from applications for more inclusive classes of
function spaces than $L^p(\rn)$, many other function spaces are introduced;
for instance, weighted Lebesgue spaces,
Lorentz spaces, variable Lebesgue spaces, Orlicz spaces and  Morrey spaces.
These spaces and the Hardy-type spaces based on them have been investigated extensively.
Associated with these spaces, an important concept about function spaces is the
(quasi-)Banach function space, which is defined as follows
(see, for instance, \cite[Chapter 1]{br} for more details).

\begin{definition}\label{bfs}
Let $Y$ be a quasi-Banach space consisting of measurable functions on $\rn$.
Then it is called a \emph{quasi-Banach function space} if it satisfies
\begin{enumerate}
\item[(i)] $\|f\|_{Y}=0$ implies that $f=0$ almost everywhere;

\item[(ii)]  $|g|\le|f|$ in the sense of almost everywhere implies
that $\|g\|_{Y}\le\|f\|_{Y}$;

\item[(iii)]  $0\le f_{m}\uparrow f$ in the sense of almost
everywhere implies that $\|f_{m}\|_{Y}\uparrow\|f\|_{Y}$;

\item[(iv)]  $\mathbf{1}_E\in Y$ for any measurable set $E\subset\rn$ with finite measure.
Here and thereafter, $\mathbf{1}_E$ denotes the \emph{characteristic function} of $E$.
\end{enumerate}

Moreover, a Banach space $Y$, consisting of measurable functions on $\rn$, is called a
\emph{Banach function space} if it satisfies the above terms (i) through (iv) and

\begin{enumerate}
\item[(v)]  for any measurable set $E\subset\rn$ with finite measure,
there exists a positive constant $C_{(E)}$, depending on $E$,
such that, for any $f\in Y$,
\begin{equation*}
\int_{E} |f(x)|\,dx \le C_{(E)}\|f\|_{Y}.
\end{equation*}
\end{enumerate}
\end{definition}

One can show that Lebesgue spaces,
Lorentz spaces, variable Lebesgue spaces and Orlicz spaces  are (quasi-)Banach function spaces.
However, weighted Lebesgue spaces, Herz spaces, Morrey spaces and Musielak--Orlicz
spaces are not necessarily quasi-Banach
function spaces (see, for instance, \cite{st,wyyz,zwyy} for more details and examples).
Therefore, in this sense, the notion of
(quasi-)Banach function spaces is restrictive.
To extend it further so that weighted Lebesgue spaces, Herz spaces, Morrey spaces and Musielak--Orlicz
spaces are also included in the generalized framework,  Sawano et al. \cite{shyy} introduced
the following so-called ball quasi-Banach function spaces and ball Banach function spaces.

\begin{definition}
Let $Y$ be a quasi-Banach space consisting of measurable functions on $\rn$.
Then it is called a \emph{ball quasi-Banach function space} if it satisfies
(i), (ii) and (iii) of Definition \ref{bfs} and
\begin{enumerate}
\item[{\rm(vi)}] $\mathbf{1}_B\in Y$ for any ball $B\subset\rn$.
\end{enumerate}

A ball quasi-Banach function space $X$ is called a \emph{ball Banach function space} if the norm of $X$ satisfies
\begin{enumerate}
\item[{\rm(vii)}] for any $f,\ g\in X$,
$$\|f+g\|_X\leq\|f\|_X+\|g\|_X;$$
\item[{\rm(viii)}] for any ball $B\subset\rn$, there exists a positive constant $C_{(B)}$ such that,
for any $f\in X$,
\begin{equation*}
\int_B|f(x)|\,dx\leq C_{(B)}\|f\|_X.
\end{equation*}
\end{enumerate}
\end{definition}

Furthermore, for any given ball quasi-Banach function space $X$, the Hardy-type space $H_X(\rn)$
(see Definition \ref{H} below or \cite[Definition 6.17]{s18}) was introduced via the grand maximal function, and its
several equivalent characterizations, respectively, in terms of Lusin-area functions,
atoms and radial or non-tangential maximal functions, were established in \cite{shyy}.
Besides, the local Hardy-type $h_X(\rn)$ (see Definition \ref{lh} below) was also introduced
in \cite{shyy}, and the relation between $H_X(\rn)$ and $h_X(\rn)$ was established.

However, for the Hardy-type spaces $H_X(\rn)$ and $h_X(\rn)$
associated with the ball quasi-Banach function space $X$,
some natural questions could be asked.
For instance, could the spaces $H_X(\rn)$ and $h_X(\rn)$ be characterized
via Littlewood--Paley $g$-functions or $g_\lambda^\ast$-functions?
Are Calder\'on--Zygmund operators or pseudo-differential
operators bounded on the Hardy-type spaces $H_X(\rn)$ or $h_X(\rn)$?

The main targets of this article are to answer these questions under the condition that
a ball quasi-Banach function space $X$ satisfies some minor assumptions.
More precisely, let $X$ be a ball quasi-Banach function space satisfying Assumptions \ref{a} and
\ref{a2} with the same $s\in(0,1]$ as below. We establish the characterizations of $H_X(\mathbb{R}^n)$
via Littlewood--Paley $g$-functions and $g_\lambda^\ast$-functions.
Moreover, we obtain the boundedness of Calder\'on--Zygmund operators
on $H_X(\mathbb{R}^n)$. For the local Hardy-type space $h_X(\mathbb{R}^n)$,
we obtain the boundedness of $S^0_{1,0}(\rn)$ pseudo-differential operators
on it via first establishing its atomic characterization. Furthermore,
the characterizations of $h_X(\mathbb{R}^n)$ are also given by means of local molecules,
local Lusin-area functions, local Littlewood--Paley $g$-functions and local Littlewood--Paley $g_\lambda^\ast$-functions.

It is worth pointing out that the results obtained in this article
have a wide range of generality. More precisely, the Hardy
type space $H_X(\rn)$ or its local version $h_X(\rn)$
considered in this article includes the classical (local) Hardy space,
the (local) weighted Hardy space, the (local)  Herz--Hardy space,
the (local) Lorentz--Hardy space, the (local)  Morrey--Hardy space,
the (local) variable Hardy space and the (local) Orlicz-slice Hardy space.
Even for the Morrey--Hardy space, the characterizations of the Littlewood--Paley $g$-function
and  $g_\lambda^\ast$-function obtained in this article are new [see Remark \ref{r2.3}(v) below] and,
in the  case of the variable Hardy space,
the Littlewood--Paley $g_\lambda^\ast$-function characterization of the Hardy space
$H_X(\rn)$ obtained in this article improves the known results [see Remark
\ref{r2.3}(vi) below for the details]. Moreover,
for both the local Lorentz--Hardy and the local Orlicz-slice Hardy space,
the boundedness of $S^0_{1,0}(\rn)$ pseudo-differential operators established
in this article is new (see Remark \ref{r4.2} below for the details).

The layout of this article is as follows. In Section \ref{s2}, we establish the
real-variable characterizations of the Hardy space
$H_X(\rn)$ via Littlewood-Paley $g$-functions and $g_\lambda^\ast$-functions
(see Theorem \ref{g} below). It is worth pointing out that,
when $X:=L^p(\rn)$ with $p\in(0,1]$, the range of the index $\lambda$
in Theorem \ref{g} below coincides with the classical case of $\lambda\in(2/p,\infty)$
[see Remark \ref{r2.3}(i) below]. Denote by $L^{p(\cdot)}(\rn)$
the variable Lebesgue space on $\rn$. When $X:=L^{p(\cdot)}(\rn)$ with
$$0<\einf_{x\in\rn}p(x)\le1,$$
Theorem \ref{g} improves the results obtained in \cite{yyyz}
via \emph{widening the range of the parameter
$\lambda$} [see Remark \ref{r2.3}(vi) below for the details].

To characterize the Hardy space $H_X(\rn)$ via the Littlewood--Paley $g$-function,
we borrow some ideas from \cite{u} and \cite{yyyz}. More precisely,
we use a maximal function $(\phi_t^\ast f)_a$ [see \eqref{p} below for its definition],
which is similar to the maximal function of Peetre type, to control the $g$-function.
The main difficulty appeared in this case is how to estimate the norm
of $(\phi_t^\ast f)_a$ in the space $X$. Differently from the case of variable Lebesgue
spaces as in \cite{yyyz}, we do not have the concrete form of the norm of the space $X$,
so that we can not compute $\|(\phi_t^\ast f)_a\|_X$ directly as in the proof
of \cite[Theorem 6.2]{yyyz}. To overcome this difficulty, we introduce the assumption
that a Fefferman--Stein type vector-valued maximal inequality holds true for the space
$X$ [see \eqref{ma2} below for the details], which indeed holds true for many function spaces
(see Remark \ref{r2.1} below).

Moreover, to characterize the Hardy space $H_X(\rn)$ via the
Littlewood--Paley $g_\lambda^\ast$-function, we use a different
method from the case of $g$-functions. More precisely,
motivated by \cite[Theorem 1.1]{a11},
we first establish the precise quantitative control of
the (quasi-)norm of the Lusin-area function $\ca_\alpha$, with the aperture
$\alpha\in[1,\fz)$, in $X$ via the aperture $\alpha$ and
the (quasi-)norm of the Lusin-area function $\ca_1$ in $X$ (see
Lemma \ref{af} below for the details), which was
obtained in \cite[Theorem 1.1]{a11} when $X:=L^p(\rn)$ with $p\in(0,\fz)$.
By this and the relation of the Lusin-area function with aperture and
the Littlewood--Paley $g_\lambda^\ast$-function, we further obtain
the Littlewood--Paley $g_\lambda^\ast$-function characterization of the Hardy space $H_X(\rn)$.

Furthermore, in Section \ref{s3}, the boundedness of some Calder\'on--Zygmund
operators on the Hardy space $H_X(\rn)$ is established
(see Theorems \ref{ccz} and \ref{gcz} below).
The main tools to obtain the boundedness
of Calder\'on--Zygmund operators on $H_X(\rn)$ are the atomic and the maximal
function characterizations of $H_X(\rn)$ (see \cite{shyy} or
Lemmas \ref{atom}, \ref{r} and \ref{mch} below).
Moreover, in Section \ref{s4}, the boundedness of $S^0_{1,0}(\rn)$ pseudo-differential
operators on the local Hardy space $h_X(\rn)$ is obtained (see Theorem \ref{pdo} below).
The main idea to obtain the
boundedness of $S^0_{1,0}(\rn)$ pseudo-differential operators on $h_X(\rn)$
is similar to that of Calder\'on--Zygmund operators. More precisely,
we first establish the atomic characterization of the space $h_X(\rn)$.
In Section \ref{s5}, the molecular characterization of $h_X(\rn)$ is obtained
via the molecular characterization of the Hardy space $H_X(\rn)$ and the
atomic characterization of $h_X(\rn)$ (see Theorem \ref{mech} below).
Furthermore, using a Calder\'on reproducing formula (see \cite[Proposition 1.1.6]{gl14}
or \eqref{11.5a} below) and an argument similar to that used in the proof of
the Littlewood--Paley function characterizations of $H_X(\rn)$,
we establish the characterizations of $h_X(\mathbb{R}^n)$
via local Littlewood--Paley functions in Section \ref{s5}
(see Theorem \ref{lpl} below).

Finally, we make some conventions on notation. Throughout the whole article,
let $\nn:=\{1,2,\dots\}$, $\zz_+:=\nn\cup\{0\}$ and ${\vec 0}_n$
be the \emph{origin} of $\rn$. We always use $C$ to denote a \emph{positive constant},
independent of the main parameters involved, but perhaps varying from line to line.
Moreover, we also use $C_{(\alpha,\beta,\dots)}$ to denote a positive constant
depending on the parameters $\alpha, \beta,\dots$.
The \emph{symbol} $f\lesssim g$ means that $f\le Cg$ and, if $f\lesssim g$ and $g\lesssim f$,
we then write $f\sim g$. We also use the following
convention: If $f\le Cg$ and $g=h$ or $g\le h$, we then write $f\ls g\sim h$
or $f\ls g\ls h$, \emph{rather than} $f\ls g=h$
or $f\ls g\le h$. The \emph{symbol} $\lceil s \rceil$ for any $s\in\rr$
denotes the smallest integer not less than $s$ and the \emph{symbol} $\lfloor s \rfloor$ for any $s\in\rr$
denotes the largest integer not greater than  $s$. For any subset $E$ of $\rn$,
we denote by $E^\complement$ the \emph{set} $\rn\setminus E$ and by $\mathbf{1}_{E}$
its \emph{characteristic function}. For any multi-index
$\alpha:=(\alpha_1, \dots,\alpha_n)\in\zn_+:=(\zz_+)^n$, let
$|\alpha|:= \alpha_1+\cdots+\alpha_n.$
Furthermore, for any cube $Q\subset\rn$,
$rQ$ means a cube with the same center as $Q$
and $r$ times the side length of $Q$; the same convention as this is made for any
ball $B\subset\rn$. We always use $\ell(Q)$ to denote the side length
of the cube $Q$ and $Q(x, \ell(Q))$ to denote the cube
with center $x$ and side length $\ell(Q)$. The operator $M$ always
denotes the \emph{Hardy--Littlewood maximal operator}, which is defined by setting,
for any $f\in L^1_{\loc}(\rn)$ (the set of all locally integrable functions  on $\rn$) and $x\in\rn$,
$$M(f)(x):=\sup_{r\in(0,\infty)}\frac{1}{|B(x,r)|}\int_{B(x,r)}|f(y)|\,dy,$$
where $B(x,r)$ denotes the \emph{ball} with the center $x$ and the radius $r$.
We use $\mathcal{S}(\rn)$ to denote the \emph{Schwartz space}
equipped with the well-known classical topology determined by a countable family
of norms, while $\mathcal{S}'(\rn)$ means its topological dual
equipped the weak-$\ast$ topology.
For any $\varphi\in\mathcal{S}(\rn)$ and $t\in(0,\infty)$,
let $\varphi_t(\cdot):= t^{-n}\varphi(\frac{\cdot}{t})$. Finally, for any $q\in[1,\fz]$,
we denote by $q'$ its \emph{conjugate exponent}, namely, $1/q + 1/q'=1$.

\section{Characterizations of $H_X(\rn)$ via $g$-function and
$g_\lambda^\ast$-function}\label{s2}

In this section, we establish the $g$-function and the $g_\lambda^\ast$-function
characterizations of the Hardy space $H_X(\rn)$. We begin with the notion of
the Hardy space $H_X(\rn)$ associated with $X$. In what follows,
let $\mathbb{R}_+^{n+1}:=\rn\times(0,\infty)$.

\begin{definition}\label{H}
Let $X$ be a ball quasi-Banach function space. Let $\Phi \in \mathcal{S}(\rn)$
satisfy $\int_{\rn}\Phi(x)\,dx\neq 0$ and $b\in(0,\infty)$ sufficiently large. Then the
\emph{Hardy space} $H_X(\rn)$ associated with $X$ is defined by setting
$$H_X(\rn):=\lf\{f \in\mathcal{S}'(\rn):\ \|f\|_{H_X(\rn)}:=
\lf\|M_b^{\ast\ast}(f,\Phi)\r\|_X<\infty\r\},$$
where $M_b^{\ast\ast}(f,\Phi)$ is defined by setting, for any $x\in\rn$,
\begin{equation}\label{mbaa}
M_b^{\ast\ast}(f,\Phi)(x) :=\sup_{(y,t)\in\mathbb{R}_+^{n+1}}
\frac{|\Phi_t\ast f(x-y)|}{(1+t^{-1}|y|)^b}.
\end{equation}
\end{definition}

We now recall the notions of the $p$-convexification and the
convexity of $X$ (see, for instance, \cite[Chapter 2]{ors}
and \cite[Definition 1.d.3]{lt} for more details).

\begin{definition}
Let $X$ be a ball quasi-Banach function space and $p \in (0,\infty)$.
\begin{enumerate}
\item[{\rm(i)}] The \emph{$p$-convexification} $X^p$ of $X$ is defined by setting
$$X^p :=\lf\{f\ \text{is measurable on}\  \rn:\ |f|^p\in X\r\},$$
equipped with the \emph{quasi-norm} $\|f\|_{X^p}:=\||f|^p\|_X^{1/p}$.
\item[{\rm(ii)}] The space $X$ is said to be \emph{$p$-convex} if there exists a positive
constant $C$ such that, for any $\{f_j\}_{j\in\nn} \subset X^{1/p}$,
$$\left\|\sum_{j=1}^\infty|f_j|\right\|_{X^{1/p}}\leq C\sum_{j=1}^\infty\|f_j\|_{X^{1/p}}.$$
In particular, when $C=1$, $X$ is said to be strictly \emph{$p$-convex}.
\end{enumerate}
\end{definition}

Denote by $L^{1}_{\rm loc}(\rn)$ the set of all locally integral functions
on $\rn$. For any $\theta\in(0,\infty)$, the \emph{powered Hardy--Littlewood maximal operator}
$M^{(\theta)}$ is defined by setting,
for any $f\in L^{1}_{\rm loc}(\rn)$ and $x\in\rn$,
\begin{equation*}
M^{(\theta)}(f)(x):=\lf\{M\lf(|f|^\theta\r)(x)\r\}^{\frac{1}{\theta}}.
\end{equation*}

Moreover, we also need some basic assumptions on $X$ as follows
(see also \cite[(2.8) and (2.9)]{shyy}).

\begin{assumption}\label{a}
Let $X$ be a ball quasi-Banach function space. For some $\theta,\ s\in (0,1]$ and $\theta<s$,
there exists a positive constant $C$ such that, for any $\{f_j\}_{j=1}^\infty \subset L^{1}_{\rm loc}(\rn)$,
\begin{equation}\label{ma}
\left\|\left\{\sum_{j=1}^\infty\left[M^{(\theta)}(f_j)\right]^s\right\}^{\frac{1}{s}}\right\|_X\leq C
\left\|\left\{\sum_{j=1}^\infty|f_j|^s\right\}^{\frac{1}{s}}\right\|_X
\end{equation}
and
\begin{equation}\label{ma2}
\left\|\left\{\sum_{j=1}^\infty\left[M^{(\theta)}(f_j)\right]^s\right\}^{\frac{1}{s}}\right\|_{X^\frac{s}{2}}\le
C \left\|\left\{\sum_{j=1}^\infty|f_j|^s\right\}^{\frac{1}{s}}\right\|_{X^\frac{s}{2}}.
\end{equation}
\end{assumption}

\begin{remark}\label{r2.1}
The inequalities \eqref{ma} and  \eqref{ma2} are called the \emph{Fefferman--Stein vector-valued maximal inequality}.
If $X:=L^p(\rn)$ with $p\in(1,\infty)$, $\theta=1$ and $s\in(1,\infty]$, the inequality \eqref{ma}
was originally established by Fefferman and Stein \cite[Theorem 1]{fs2}.
Here we give some examples of ball quasi-Banach
function spaces satisfying \eqref{ma} and \eqref{ma2} as follows.

\begin{enumerate}
\item[{\rm(a)}] By \cite[Theorem 1]{fs2}, we know that \eqref{ma} also holds true when $\theta,\ s\in(0,1]$,
$\theta<s$ and $X:=L^p(\rn)$ with $p\in(\theta,\infty)$.
Since $(L^p(\rn))^{s/2}=L^{ps/2}(\rn)$ for any $p,\ s\in(0,\infty)$,
it follows that \eqref{ma2} holds true when $\theta,\ s\in(0,1]$, $\theta<s$ and $X:=L^p(\rn)$ with
$p\in(2\theta/s,\infty)$.

\item[{\rm(b)}] For any $q\in[1,\fz]$, denote by $A_q(\rn)$ the class of all \emph{Muckenhoupt weights} (see, for instance,
\cite[Definitions 7.1.1 and 7.1.3]{gl} for its definition). For any $p\in(0,\fz)$ and $w\in A_\fz(\rn)$,
the \emph{weighted Lebesgue space} $L^p_w(\rn)$ is defined by setting
$$L^p_w(\rn):=\left\{f \ \text{measurable} :\ \|f\|_{L^p_w(\rn)}:=\left[\int_{\rn}|f(x)|^pw(x)\,dx\right]^{1/p}<\infty\right\}.$$
Let $X:=L^p_w(\rn)$ with $p\in(0,\fz)$ and $w\in A_\fz(\rn)$. Then $X$ is a ball quasi-Banach function space
(see \cite[Section 7.1]{shyy}), but it might not be a quasi-Banach function space (see \cite[Remark 5.22(ii)]{wyyz}).
Moreover, \eqref{ma} holds true for  any $\theta,\ s\in(0,1]$,
$\theta<s$ and $X:=L^p_w(\rn)$ with $p\in(\theta,\infty)$ and $w\in A_{p/\theta}(\rn)$ (see, for instance, \cite[Theorem 3.1(b)]{aj}).
By \eqref{ma} and the fact that $(L^p_w(\rn))^{s/2}=L^{ps/2}_w(\rn)$ for any $p,\ s\in(0,\infty)$ and $w\in A_\fz(\rn)$,
we conclude that \eqref{ma2} holds true for any $\theta,\ s\in(0,1]$, $\theta<s$ and $X:=L^p_w(\rn)$
with $p\in(2\theta/s,\infty)$ and $w\in A_{p/\theta}(\rn)$.

\item[{\rm(c)}] For the cube $Q(\vec{0}_n,1)$ and any $j\in\nn$, let
$$S_j(Q(\vec{0}_n,1)):=Q(\vec{0}_n,2^{j+1})\setminus Q(\vec{0}_n,2^j).$$
Here and thereafter, $\vec{0}_{n}$ denotes the \emph{origin} of $\rr^{n}$.
For any $\alpha\in\mathbb{R}$ and $p,\ q\in(0,\infty]$, the \emph{Herz space} $\ck_{p,q}^\alpha(\rn)$
is defined to be the set of all measurable functions $f$ on $\rn$ satisfying
$$\displaystyle\|f\|_{\ck_{p,q}^\alpha(\rn)}:=
\lf\|\mathbf{1}_{Q(\vec{0}_n,2)}f\r\|_{L^p(\rn)}+
\left\{\sum_{j=1}^\infty\lf[2^{j\alpha}\lf\|\mathbf{1}_{S_j(Q(\vec{0}_n,1))}
f\r\|_{L^p(\rn)}\r]^q\right\}^\frac{1}{q}<\fz.
$$
Let $X:=\ck_{p,q}^\alpha(\rn)$ with $p,\ q\in(0,\fz)$ and $\az\in(-n/p,\fz)$.
Then \eqref{ma} holds true for the space $X$, any $s\in(0,1]$ and any
$\theta\in(0,\min\{s,p, [\alpha/n+1/p]^{-1}\})$
(see, for instance, \cite{i} and  \cite{ly} for more details). Moreover,
similarly to \eqref{ma}, \eqref{ma2} holds true for the space $X$, any
$s\in(0,1]$ and any $\theta\in(0,\min\{s,(sp)/2, [\alpha/(sn)+2/(sp)]^{-1}\})$.
By \cite[Remark 5.22(ii)]{wyyz}, it is easy to know that a Herz space might not be a quasi-Banach function  space.

\item[{\rm(d)}] The \emph{Lorentz space} $L^{p,q}(\rn)$ is defined to be
the set of all measurable functions $f$ on $\rn$ satisfying that, when $p,\ q\in(0,\fz)$,
$$\|f\|_{L^{p,q}(\rn)}:=
\left\{\int_0^\infty[t^{1/p}f^\ast(t)]^q\,\frac{dt}{t}\right\}^{\frac{1}{q}}<\fz$$
and, when $p\in(0,\fz)$ and $q=\fz$,
$$\|f\|_{L^{p,q}(\rn)}:=\sup_{t\in(0,\fz)}\lf\{t^{1/p}f^\ast(t)\r\}<\fz,$$
where $f^\ast$, the \emph{decreasing rearrangement function} of $f$, is defined by setting,
for any $t\in[0,\fz)$,
$$f^\ast(t):=\inf\{s\in(0,\fz):\ \mu_f(s)\le t\}$$
with $\mu_f(s):=|\{x\in\rn:\ |f(x)|>s\}|$.
Let $X:=L^{p,q}(\rn)$ with $p\in(0,\infty)$ and $q\in(0,\infty]$.
Then \eqref{ma} holds true for the space $X$, any $s\in(0,1]$ and any
$\theta\in(0, \min\{s, p\})$
(see, for instance, \cite[Theorem 2.3(iii)]{ccmp}); similarly to \eqref{ma},
\eqref{ma2} also holds true for the space $X$, any $s\in(0,1]$
and any $\theta\in(0,\min\{s,(sp)/2\})$.

\item[{\rm(e)}] Let $0<q\le p\le\fz$.
Recall that the \emph{Morrey space} ${\mathcal M}^p_q(\rn)$ is defined to be the set of all
$f\in L^q_{{\rm loc}}(\rn)$ such that
\begin{equation*}
\|f\|_{{\mathcal M}^p_q(\rn)}:=\sup_{B\subset\rn}
|B|^{\frac1p-\frac1q}\left[\int_{B}|f(y)|^q\,dy\right]^{\frac1q}<\fz,
\end{equation*}
where the supremum is taken over all balls $B\subset\rn$.
Let $X:=\mathcal{M}^p_q(\rn)$ with $p\in(0,\infty]$ and $q\in(0,p]$.
Then \eqref{ma} holds true
for the space $X$, any $s\in(0,1]$ and any $\theta\in(0,\min\{s,q\})$
(see, for instance, \cite{cf}, \cite{h13}, \cite{h15},
\cite{st05} and \cite{tx05}); similarly, \eqref{ma2}
also holds true for the space $X$, any $s\in(\theta,1]$ and any $\theta\in(0,\min\{s,(sq)/2\})$.
Recall that a Morrey space might not be a quasi-Banach function space (see, for instance, \cite[(1.5)]{st})

\item[{\rm(f)}] Let $p\in[0,\fz)$ and $\varphi:\ {\mathbb R}^n \times[0,\infty)\to[0,\infty)$
be a function such that, for almost every  $x\in\rn$, $\varphi(x,\cdot)$ is an Orlicz function.
The function $\varphi$ is said to be of \emph{uniformly upper} (resp.,
\emph{lower}) \emph{type $p\in[0,\fz)$} if there exists a positive constant $C$ such that,
for any $x\in\rn$, $t\in[0,\fz)$ and $s\in[1,\fz)$ (resp., $s\in[0,1]$),
$\varphi(x,st)\le Cs^p\varphi(x,t)$.

The function $\varphi$ is said to satisfy the \emph{uniform Muckenhoupt condition
for some $q\in[1,\fz)$}, denoted by $\varphi\in{\mathbb A}_q(\rn)$, if, when $q\in (1,\fz)$,
\begin{equation*}
[\varphi]_{\mathbb{A}_q(\rn)}:=\sup_{t\in
(0,\fz)}\sup_{B\subset\rn}\frac{1}{|B|^q}\int_B
\vz(x,t)\,dx \lf\{\int_B
[\vz(y,t)]^{-q'/q}\,dy\r\}^{q/q'}<\fz,
\end{equation*}
where $1/q+1/q'=1$, or
\begin{equation*}
[\vz]_{\mathbb{A}_1(\rn)}:=\sup_{t\in (0,\fz)}
\sup_{B\subset\rn}\frac{1}{|B|}\int_B \vz(x,t)\,dx
\lf(\esup_{y\in B}[\vz(y,t)]^{-1}\r)<\fz.
\end{equation*}
Here the first suprema are taken over all $t\in(0,\fz)$ and the
second ones over all balls $B\subset\rn$.

The class ${\mathbb A}_\infty(\rn)$ is defined by setting
$${\mathbb A}_\infty(\rn):=\bigcup_{q\in[1,\infty)}{\mathbb A}_q(\rn).
$$

For any given $\varphi\in{\mathbb A}_\infty(\rn)$,
the \emph{critical weight index} $q(\varphi)$ is defined by setting
$$q(\varphi):=\inf\{q \in[1,\infty):\ \varphi\in{\mathbb A}_q(\rn)\}.$$

Then the function $\varphi:\ \rn\times[0,\fz)\to[0,\fz)$ is called
a \emph{growth function} if the following hold true:
\begin{enumerate}
\item[(f)$_1$] $\varphi$ is a \emph{Musielak--Orlicz function}, namely,
\begin{itemize}
\item[] $\varphi(x,\cdot)$ is an Orlicz function for almost every given $x\in\rn$;
\item[] $\varphi(\cdot,t)$ is a measurable function for any given $t\in[0,\fz)$.
\end{itemize}
 \item[(f)$_2$] $\varphi\in {\mathbb A}_{\fz}(\rn)$.
\item[(f)$_3$] The function $\varphi$ is of
uniformly lower type $p$ for some $p\in(0,1]$ and of uniformly
upper type 1.
\end{enumerate}

For a growth function $\varphi$, a measurable function $f$ on $\rn$ is said to
be in the \emph{Musielak--Orlicz space} $L^{\varphi}(\rn)$ if $\int_{\rn}\varphi(x,|f(x)|)\,dx<\fz$,
equipped with the (quasi-)norm
$$\|f\|_{L^{\varphi}(\rn)}:=\inf\left\{\lambda\in(0,\infty): \int_{\rn}
\varphi\left(x,\frac{|f(x)|}{\lambda}\right)dx\leq 1\right\}.$$
Let $X:=L^{\varphi}(\rn)$, where $\varphi$ is a growth function with uniformly
lower type $p_\varphi^-$ and uniformly upper type $p_\varphi^+$.
Then \eqref{ma} holds true for the space $X$,
any $s\in(0,1]$ and any $\theta\in(0, \min\{s,p_\varphi^-/q(\varphi)\})$
(see, for instance, \cite[Theorem 7.14(i)]{shyy}); similarly, \eqref{ma2} also
holds true for the space $X$, any $s\in(0,1]$ and any
$\theta \in (0,\min\{s,2p_\varphi^-/(sq(\varphi))\})$.
Observe that a Musielak--Orlicz space might not be a quasi-Banach function space (see,
for instance, \cite[Remark 5.22(ii)]{wyyz}).

\item[{\rm(g)}] Let $p(\cdot):\ {\mathbb R}^n\to[0,\infty)$ be a measurable function.
Then the \emph{variable Lebesgue space} $L^{p(\cdot)}(\rn)$ is defined to be the set of
all measurable functions $f$ on $\rn$ such that
\begin{equation*}
\|f\|_{L^{p(\cdot)}(\rn)}:=\inf\left\{\lambda\in(0,\fz):\ \int_{{\mathbb R}^n}
\left[\frac{|f(x)|}{\lambda}\right]^{p(x)}dx\le 1\right\}<\infty.
\end{equation*}
For any measurable function $p(\cdot):\ {\mathbb R}^n\to(0,\infty)$, let
\begin{equation}\label{eq-p}
p_+:=\esup_{x\in\rn}p(x)\ \ \text{and}\ \ p_{-}:=\einf_{x\in\rn}p(x).
\end{equation}
A function $p(\cdot):\ \rn\to(0,\fz)$ is said to be \emph{globally log-H\"older continuous}
if there exist positive constants $p_\fz$ and $C$ such that, for any $x,\ y\in\rn$,
$$|p(x)-p(y)|\leq\frac{C}{\log(1/|x-y|)}\quad \text{when}\quad |x-y|\le\frac{1}{2},$$
and
$$|p(x)-p_\fz|\leq\frac{C}{\log(e+|x|)}.$$

Let $X:=L^{p(\cdot)}(\rn)$ with $p(\cdot)$ being globally log-H\"older continuous.
Then \eqref{ma} holds true for the space $X$, any $s\in(0,1]$
and any $\theta\in(0, \min\{s,p_-\})$ (see, for instance, \cite{uf} and \cite{uw});
similarly, \eqref{ma2} also holds true for the space $X$,
any $s\in(0,1]$ and any $\theta\in(0, \min\{s,sp_-/2\})$.

\item[{\rm(h)}] Let $t,\ r\in(0,\fz)$ and $\Phi$ be an Orlicz function
with lower type $p_\Phi^-\in(0,\fz)$ and upper type $p_\Phi^+\in(0,\fz)$.
The \emph{Orlicz-slice space} $(E_\Phi^r)_t(\rn)$ is defined to
be the set of all measurable functions $f$ such that
$$\|f\|_{(E_\Phi^r)_t(\rn)}:=\lf\{\int_{\rn}\lf[\frac{\|f\mathbf{1}_{B(x,t)}\|_{L^\Phi(\rn)}}
{\|\mathbf{1}_{B(x,t)}\|_{L^\Phi(\rn)}}\r]^r\,dx\r\}^{1/r}<\fz.
$$
Let $X:=(E_\Phi^r)_t(\rn)$. It was proved in \cite[Lemma 2.28]{zyyw}
that $X$ is a ball quasi-Banach function space; however, it might not be a quasi-Banach function space
(see, for instance, \cite[Remark 7.4(i)]{zwyy}). Moreover,
the assumption \eqref{ma} holds true for the space $X$, any
$s\in(0,1]$ and any $\tz\in(0,\min\{s,p_\Phi^-,r\})$ (see \cite[Lemma 4.3]{zyyw});
similarly, \eqref{ma2} also holds true for the space $X$,
any $s\in(0,1]$ and any $\theta\in(0,\min\{s,2p_\Phi^-/s,r\})$.
\end{enumerate}
\end{remark}

To state the following assumption on $X$, we need the notion of the associate space.
For any ball Banach function space $Y$, the \emph{associate space}
(also called \emph{K\"{o}the dual}) $Y'$ of $Y$ is defined by setting
$$Y':=\lf\{f \ \text{measurable}:\ \|f\|_{Y'}:=
\sup\{\|fg\|_{L^1(\rn)}:\ g \in Y,\ \|g\|_Y=1\}<\infty\r\}
$$
(see \cite[Chapter 1, Section 2]{br} for the details).
For any ball Banach function space $Y$, $Y'$ is also a
ball Banach function space (see \cite[Proposition 2.3]{shyy}).

\begin{assumption}\label{a2}
Assume that $X$ is a ball quasi-Banach function space, there exists $s\in(0,1]$
such that $X^{1/s}$ is also a ball Banach function space, and there exist $q\in(1,\fz]$
and $C\in(0,\fz)$ such that, for any $f\in (X^{1/s})'$,
\begin{equation}\label{ma21}
\left\|M^{((q/s)')}(f)\right\|_{(X^{1/s})'}\le  C \|f\|_{(X^{1/s})'}.
\end{equation}
\end{assumption}

\begin{remark}
We point out that \cite[Theorems 2.10, 3.7 and 3.21]{shyy} need the additional assumption
that there exists $s\in(0,1]$ such that $X^{1/s}$ is a ball Banach function space.
Indeed, this assumption ensures that $(X^{1/s})'$ is also a ball Banach function space, which implies that,
for any $f\in (X^{1/s})'$,  $f\in L^1_{\loc}(\rn)$ and hence the Hardy-Littlewood maximal
operator can be defined on $(X^{1/s})'$.
\end{remark}

\begin{remark}\label{r2.2}
By the definitions of $X^{1/s}$ and $(X^{1/s})'$, we know that \eqref{ma21} is equivalent to
that there exists a positive constant $C$ such that, for any $f\in [(X^{1/s})']^{1/(q/s)'}$,
\begin{equation}\label{ma22}
\left\|M(f)\right\|_{[(X^{1/s})']^{1/(q/s)'}}\leq C \left\|f\right\|_{[(X^{1/s})']^{1/(q/s)'}}.
\end{equation}
Here we give several examples of
function spaces satisfying \eqref{ma22} as follows.

\begin{enumerate}
\item[{\rm(a)}] Let $X:=L^p(\rn)$ with $p\in(0,\infty)$. Then,
for any $s\in(0,\min\{1,p\})$ and $q\in(\max\{1,p\},\infty]$,
it holds true that
$$[(X^{1/s})']^{1/(q/s)'} =L^{(p/s)'/(q/s)'}(\rn),$$
which, combined with the fact that $M$ is bounded on $L^r(\rn)$ for any $r\in(1,\infty)$, further
implies that \eqref{ma22} holds true in this case.

\item[{\rm(b)}] Let $X:=L^p_w(\rn)$ with $p\in(0,\infty)$ and $w\in A_\fz(\rn)$.
Then, for any $s \in (0,\min\{1,p\})$, $w\in A_{p/s}(\rn)$ and $q\in(\max\{1,p\},\infty]$
large enough such that
$w^{1-(p/s)'}\in A_{(p/s)'/(q/s)'}(\rn)$,  it holds true that $(p/s)'/(q/s)'>1$ and
$$[(X^{1/s})']^{1/(q/s)'}=L^{(p/s)'/(q/s)'}_{w^{1-(p/s)'}}(\rn),$$
which, together with the boundedness of $M$ on the weighted Lebesgue space
(see, for instance, \cite[Theorem 7.3]{d}), further implies that \eqref{ma22} holds true in this case.

\item[{\rm(c)}] Let $X:=\ck^\alpha_{p,r}(\rn)$ with $p,\ r\in(0,\infty)$ and $\alpha\in(-n/p,\infty)$.
Then, for any $s\in(0,\min\{p,r\})$ and $q\in(\max\{1,p\},\infty]$, it holds true that $p/s>1$, $r/s>1$ and $(p/s)'/(q/s)'>1$.
Moreover, by \cite[Corollary 1.2.1]{lyh}, we know that
$$[(X^{1/s})']^{1/(q/s)'} = \ck_{(p/s)'/(q/s)',
(r/s)'/(q/s)'}^{-\alpha s/(q/s)'}(\rn).$$
From this and the fact that $M$ is bounded on $\ck^\alpha_{p,r}(\rn)$ for any $p\in(1,\infty)$ (see, for instance, \cite[Corollary 4.5]{i}),
it follows that \eqref{ma22} holds true in this case.

\item[{\rm(d)}] Let $X:=L^{p,r}(\rn)$ with $p\in(0,\infty)$ and $r\in(0,\infty)$.
Then, for any $s\in(0,\min\{1,p,r\})$ and $q\in(\max\{1,p,r\},\infty]$, it holds true that
$p/s>1$, $r/s>1$, $(p/s)'/(q/s)'>1$ and $(r/s)'/(q/s)'>1$.
Moreover, by \cite[Theorem 1.4.16]{gl}, we find that
$$[(X^{1/s})']^{1/(q/s)'}=L^{(p/s)'/(q/s)',(r/s)'/(q/s)'}(\rn).$$
From this and the fact that $M$ is bounded on $L^{p,r}(\rn)$ for any $p\in(1,\infty)$ and $r\in(1,\infty)$ (see, for instance, \cite[Theorem 2.3(iii)]{ccmp}),
we deduce that \eqref{ma22} holds true in this case.

\item[{\rm(e)}] Let $X:={\mathcal M}^p_r(\rn)$ with $p\in(0,\infty)$ and $r\in(0,p]$.
Assume that $1<p_1\leq p_2<\infty$.
A function $b$ on $\rn$ is called a $(p_2', p_1')$-block if  $\supp(b)\subset Q$ with $Q\in \cq$, and
$$\left[\int_Q|b(x)|^{p_1'}\,dx\right]^{\frac{1}{p_1'}}\leq|Q|^{\frac{1}{p_2}-\frac{1}{p_1}},$$
where $\cq$ denotes the family of all cubes in $\rn$ with
sides parallel to the coordinate axes. The space $\cb_{p_1'}^{p_2'}(\rn)$ is
defined as the set of all functions $f \in L^{p_1'}_{\loc}(\rn)$
(the set of all locally $p_1'$-order integrable functions on $\rn$) equipped with
$$\|f\|_{\cb_{p_1'}^{p_2'}(\rn)}:= \inf\left\{\|\{\lambda_k\}_k\|_{l^1} : f =\sum_{k}\lambda_kb_k \right\}<\infty,$$
where $\|\{\lambda_k\}_k\|_{l^1} :=\sum_{k}|\lambda_k|$
and $\{b_k\}_k$ is a sequence of $(p_2', p_1')$-blocks, and the infimum
is taken over all possible decompositions of $f$ as above (see, for instance, \cite[p.\,666]{st}).
Then, for any
$s\in(0,\min\{1,r\})$ and $q\in(\max\{1,p\},\infty]$, by \cite[Theorem 4.1]{st}, we can show that
$$[(X^{1/s})']^{1/(q/s)'}={\mathcal B}^{(p/s)'/(q/s)'}_{(r/s)'/(q/s)'}(\rn),$$
which, combined with the fact that $M$ is bounded on
${\mathcal B}^p_r(\rn)$ for any $1<p\le r<\fz$ (see, for instance, \cite[Theorem 3.1]{ch14}),
further implies that \eqref{ma22} holds true in this case.

\item[{\rm(f)}] Let $X:=L^{p(\cdot)}(\rn)$ with $0<p_{-}\le p_+<\fz$ and
$p(\cdot)$ being globally log-H\"older continuous.
Then, for any $s\in(0,\min\{1,p_{-}\})$ and $q\in(\max\{1,p_+\},\infty]$,
let $(p(\cdot)/s)'$ be a variable exponent such that $\frac{1}{(p(\cdot)/s)'}+\frac{1}{p(\cdot)/s}=1$,
we can then show that
$$[(X^{1/s})']^{1/(q/s)'}=L^{(p(\cdot)/s)'/(q/s)'}(\rn),$$
which, together with the fact that $M$ is bounded on $L^{p(\cdot)}(\rn)$
with $p_{-}\in(1,\fz)$ (see, for instance, \cite[Theorem 3.16]{uf}),
implies that \eqref{ma22} holds true in this case.

\item[{\rm(g)}] Assume that $\Phi$ is an \emph{Orlicz function} with \emph{lower type}
$p_\Phi^- \in (0,1)$ and \emph{upper type} $p_\Phi^+ =1$. Let $X:=L^\Phi(\rn)$.
Then $X^{1/s}=L^{\widetilde{\Phi}}(\rn)$, where, for any $t \in [0,\infty)$,
$\widetilde{\Phi}(t):=\Phi(t^{1/s})$. Let $\Psi$ be the \emph{conjugate} of $\widetilde{\Phi}$, which is defined by setting, for any $t\in[0,\infty)$, $\Psi(t):=\sup_{s\in[0,\infty)}[st-\widetilde{\Phi}(s)]$ (see, for instance, \cite[p.\,83, Theorem 13.6(a)]{m83}).
For any $t\in [0,\infty)$, let $\widetilde{\Psi}(t):=\Psi(t^{1/(q/s)'})$. Then we can show that,
for any $s \in (0,p_\Phi^-)$  and $q\in (1, \infty]$,
$$[(X^{1/s})']^{1/(q/s)'}=L^{\widetilde{\Psi}}(\rn).$$
Moreover, by \cite[Proposition 7.8]{shyy},
we conclude that $\widetilde{\Psi}$ is an Orlicz function with the lower type
$p_{\widetilde{\Psi}}^-=(p_\Phi^+/s)'/(q/s)'.$
It is easy to see that $p_{\widetilde{\Psi}}^-\in(1,\fz)$.
From this and the fact that, if $\Theta$ is an Orlicz function
with lower type $p_\Theta^-\in(1,\infty)$, then $M$ is
bounded on the space $L^\Theta(\rn)$
(see, for instance, \cite[Theorem 7.12]{shyy}),
we deduce that \eqref{ma22} holds true for any $s\in(0,p_\Phi^-)$
and $q\in(1,\fz]$.

\item[\rm(h)] Let $t,\ r\in(0,\fz)$, $\Phi$ be an Orlicz function
with $p_\Phi^-,\ p_\Phi^+\in(0,\fz)$ and $X:=(E_\Phi^r)_t(\rn)$.
It was proved in \cite[Lemma 4.4]{zyyw}
that \eqref{ma22} holds true for any $q\in(\max\{1,r, p_\Phi^+\},\fz]$ and
$s\in(0,\min\{1,p_\Phi^-,r\})$.
\end{enumerate}
\end{remark}

To give the definitions of Lusin-area functions, $g$-functions and
$g^\ast_\lambda$-functions considered in this article, we need the following
notions.

Let ${\mathcal F}$ and ${\mathcal F}^{-1}$ be, respectively, the \emph{Fourier transform}
and its \emph{inverse}. Namely,
for any $f\in\cs(\rn)$ and $\xi\in\rn$,
\begin{align*}
{\mathcal F}f(\xi):=(2\pi)^{-\frac{n}{2}}
\int_{{\mathbb R}^n} f(x)e^{-i x \cdot \xi}\,d x \ \ \text{and}\ \
{\mathcal F}^{-1}f(\xi):={\mathcal F}f(-\xi),
\end{align*}
here and thereafter, $x\cdot\xi :=\sum_{i=1}^nx_i\xi_i$ for any $x:=(x_1,\dots,x_n)$, $\xi:=(\xi_1,\dots,\xi_n)\in\rn$. Then $\mathcal{F}$ and ${\mathcal F}^{-1}$ are naturally generalized to $\mathcal{S}'(\rn)$, namely, for any $f\in\mathcal{S}'(\rn)$, $\mathcal{F}f$ is defined by setting, for any $\phi\in\cs(\rn)$,
$\langle\mathcal{F}f,\phi\rangle:=\langle f,\mathcal{F}\phi\rangle$
and ${\mathcal F}^{-1}f$ is defined by setting, for any $\xi\in\rn$,
${\mathcal F}^{-1}f(\xi):={\mathcal F}f(-\xi)$.

\begin{definition}
For any $t\in(0,\infty)$, $f\in\mathcal{S}'(\rn)$ and $\varphi \in \mathcal{S}(\rn)$, let
$$\varphi(tD)(f):=\mathcal{F}^{-1}[\varphi(t\cdot)\mathcal{F}f].$$

A distribution $f\in \mathcal{S}'(\rn)$ is said to
\emph{vanish weakly at infinity}
if, for any $\varphi\in\mathcal{S}(\rn)$,
$$\lim_{t\to\infty}\varphi(tD)(f)=0\quad \text{in} \quad \mathcal{S}'(\rn).$$
\end{definition}

Now we recall the notions of the Lusin-area function, the
Littlewood--Paley $g$-function and
the Littlewood--Paley $g_\lambda^\ast$-function, respectively, as follows.

\begin{definition}
Let $\varphi \in \mathcal{S}(\rn)$. For any $f \in \mathcal{S}'(\rn)$,
the \emph{Lusin-area function $S(f)$}, the \emph{Littlewood--Paley $g$-function $g(f)$}
and the \emph{Littlewood--Paley $g_\lambda^\ast$-function $g_\lambda^\ast(f)$}
with $\lambda \in (0,\infty)$ of $f$ are, respectively,
defined by setting, for any $x\in\rn$,
\begin{equation}
S(f)(x):=\left\{\int_{\Gamma(x)}\lf|\varphi(tD)(f)(y)\r|^2\,
\frac{dydt}{t^{n+1}}\right\}^{\frac{1}{2}},
\end{equation}
where, for any $x\in\rn$, $\Gamma(x):=\{(y,t)\in\mathbb{R}_+^{n+1}:\ |x-y|<t\}$,
\begin{equation}
g(f)(x):=\left\{\int_0^\infty\lf|\varphi(tD)(f)(x)\r|^2\,\frac{dt}{t}\right\}^\frac{1}{2}
\end{equation}
and
\begin{equation}
g_\lambda^\ast(f)(x):=\left\{\int_0^\infty\int_{\rn}\left(\frac{t}{t+|x-y|}\right)^{\lambda n}
\lf|\varphi(tD)(f)(x)\r|^2\,\frac{dydt}{t^{n+1}}\right\}^\frac{1}{2}.
\end{equation}
\end{definition}

Then the main results of this section can be  stated as follows.

\begin{theorem}\label{g}
Let $\varphi\in\mathcal{S}(\rn)$ satisfy
$$\mathbf{1}_{B(\vec{0}_n,4)\setminus B(\vec{0}_n,2)}\leq\varphi\leq
\mathbf{1}_{B(\vec{0}_n,8)\setminus B(\vec{0}_n,1)}.$$
Assume that $X$ is a ball quasi-Banach function space satisfying
\eqref{ma} with some $0<\theta<s\le 1$ and Assumption \ref{a2} for some $q\in(1,\fz]$
and the same $s\in(0,1]$ as in \eqref{ma}.
\begin{itemize}
\item[\rm(i)] If $X$ satisfies \eqref{ma2} with the same $\theta$ and $s$ as in \eqref{ma}, then
$f\in H_X(\rn)$ if and only if $f\in\cs'(\rn)$, $f$ vanishes weakly at infinity and
$\|g(f)\|_X<\infty$. Moreover, for any $f\in H_X(\rn)$,
$\|f\|_{H_X(\rn)}\sim\|g(f)\|_X$ with the positive equivalence
constants independent of $f$.

\item[\rm(ii)] Let $\lambda\in(\max\{2/\theta,2/\theta+(1-2/q)\},\fz)$.
Then $f\in H_X(\rn)$ if and only if $f\in\cs'(\rn)$, $f$ vanishes weakly at infinity and
$\|g_\lambda^\ast(f)\|_X<\infty$. Moreover, for any $f\in H_X(\rn)$,
$\|f\|_{H_X(\rn)}\sim\|g_\lambda^\ast(f)\|_X$ with the positive equivalence
constants independent of $f$.
\end{itemize}
\end{theorem}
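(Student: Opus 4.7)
The plan is to reduce both parts of Theorem~\ref{g} to the Lusin-area function characterization $\|f\|_{H_X(\rn)}\sim\|S(f)\|_X$, which is already established in \cite{shyy} and will be quoted as a lemma in this paper. Granted this, the task is to prove the two-sided $X$-norm equivalences $\|g(f)\|_X\sim\|S(f)\|_X$ for part~(i) and $\|g_\lambda^\ast(f)\|_X\sim\|S(f)\|_X$ for part~(ii), under the standing hypotheses that $f\in\cs'(\rn)$ vanishes weakly at infinity.

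For part~(i), the technical vehicle is the Peetre-type maximal function
\[
(\varphi_t^\ast f)_a(x):=\sup_{y\in\rn}\frac{|\varphi(tD)f(x-y)|}{(1+|y|/t)^a},\qquad a>n/\theta.
\]
Because $\widehat\varphi$ is supported in an annulus, the band-limited Peetre inequality yields the pointwise bound $(\varphi_t^\ast f)_a(x)\lesssim M^{(\theta)}(\varphi(tD)f)(x)$; meanwhile, $|\varphi(tD)f(y)|\le 2^a(\varphi_t^\ast f)_a(x)$ whenever $(y,t)\in\Gamma(x)$, which delivers the pointwise dominations
\[
g(f)(x)\le\left\{\int_0^\infty[(\varphi_t^\ast f)_a(x)]^2\,\frac{dt}{t}\right\}^{1/2}\ \text{and}\ S(f)(x)\lesssim\left\{\int_0^\infty[(\varphi_t^\ast f)_a(x)]^2\,\frac{dt}{t}\right\}^{1/2}.
\]
The decisive step is to upgrade this to the $X$-norm estimate
\[
\left\|\left\{\int_0^\infty[(\varphi_t^\ast f)_a]^2\,\frac{dt}{t}\right\}^{1/2}\right\|_X\lesssim\|g(f)\|_X,
\]
which, after a standard discretization of the $t$-integral into dyadic intervals, reduces to the vector-valued Fefferman--Stein bound $\|\{\sum_j[M^{(\theta)}F_j]^s\}^{1/s}\|_{X^{s/2}}\lesssim\|\{\sum_j|F_j|^s\}^{1/s}\|_{X^{s/2}}$ supplied by \eqref{ma2}, applied to dyadic frequency pieces $F_j$ of $\varphi(tD)f$. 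Combining these inequalities proves $\|S(f)\|_X\lesssim\|g(f)\|_X$, hence $\|f\|_{H_X(\rn)}\lesssim\|g(f)\|_X$. The converse $\|g(f)\|_X\lesssim\|f\|_{H_X(\rn)}$ proceeds symmetrically, first dominating $g(f)$ by the same Peetre $L^2(dt/t)$-expression and then by $S(f)$ via a subharmonic-type averaging over $\Gamma(x)$.

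For part~(ii), I would decompose $g_\lambda^\ast(f)(x)^2$ dyadically in $|x-y|/t$. Setting $T_0(x):=\Gamma(x)$ and $T_k(x):=\{(y,t)\in\mathbb{R}_+^{n+1}:2^{k-1}t\le|x-y|<2^kt\}$ for $k\in\nn$, the weight obeys $(t/(t+|x-y|))^{\lambda n}\lesssim 2^{-k\lambda n}$ on $T_k(x)$, so
\[
[g_\lambda^\ast(f)(x)]^2\lesssim\sum_{k=0}^\infty 2^{-k\lambda n}[\ca_{2^k}(f)(x)]^2,
\]
where $\ca_\alpha(f)(x):=\{\int\!\int_{|x-y|<\alpha t}|\varphi(tD)f(y)|^2\,dy\,dt/t^{n+1}\}^{1/2}$ is the Lusin-area function with aperture $\alpha$ (so $\ca_1=S$). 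Raising to the $s$-th power, using the elementary subadditivity $(\sum_k a_k)^{s/2}\le\sum_k a_k^{s/2}$ valid for $s\le 1$, and then the triangle inequality in the ball Banach function space $X^{1/s}$ (provided by Assumption~\ref{a2}), the proof reduces to the aperture estimate
\[
\|\ca_\alpha(f)\|_X\lesssim\alpha^{n\max\{1/\theta,\,1/\theta+1/2-1/q\}}\|\ca_1(f)\|_X\qquad\forall\,\alpha\ge 1,
\]
which is precisely Lemma~\ref{af}, the abstract $X$-analogue of Auscher's $L^p$ result \cite[Theorem 1.1]{a11}. Summing the resulting geometric series $\sum_k 2^{kns(\beta-\lambda/2)}$ with $\beta=\max\{1/\theta,\,1/\theta+1/2-1/q\}$ then gives $\|g_\lambda^\ast(f)\|_X\lesssim\|S(f)\|_X$, with convergence exactly when $\lambda>2\beta=\max\{2/\theta,\,2/\theta+1-2/q\}$. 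The reverse inequality is immediate because $(t/(t+|x-y|))^{\lambda n}\ge 2^{-\lambda n}$ on $\Gamma(x)$, whence $g_\lambda^\ast(f)(x)\gtrsim S(f)(x)$ pointwise.

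The main obstacle is the abstract control of the Peetre-maximal square function in $X$ (part~(i)) and the sharp aperture estimate of Lemma~\ref{af} (part~(ii)). Without an explicit form of $\|\cdot\|_X$---in contrast with the variable-exponent treatment in \cite{yyyz}---one cannot compute $\|(\varphi_t^\ast f)_a\|_X$ by hand, and this is exactly what motivates building the $X^{s/2}$-valued Fefferman--Stein inequality \eqref{ma2} into Assumption~\ref{a}. The aperture estimate is more delicate: transplanting Auscher's duality-based argument from $L^p$ to an abstract ball quasi-Banach function space, one must replace the $L^{p'}$-duality by the maximal boundedness \eqref{ma21} on the associate space $(X^{1/s})'$, and this is precisely the source of the $q$-dependent term $1-2/q$ in the admissible lower bound for $\lambda$.
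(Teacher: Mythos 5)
Your proof of part~(ii) is essentially the paper's own argument: the dyadic decomposition in $|x-y|/t$, the reduction to the aperture estimate of Lemma~\ref{af}, and the geometric-series summation with threshold $\lambda>\max\{2/\theta,\,2/\theta+1-2/q\}$ all match, and the easy direction via $S(f)\lesssim g_\lambda^\ast(f)$ pointwise together with Lemma~\ref{la} is exactly how the paper argues.

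For part~(i), however, there is a genuine gap. You propose to control $g_{a,\ast}(f)$ by first invoking the scalar band-limited Peetre inequality
\begin{equation*}
(\varphi_t^\ast f)_a(x)\lesssim M^{(\theta)}\bigl(\varphi(tD)f\bigr)(x),
\end{equation*}
then ``discretizing the $t$-integral'' and finally applying the vector-valued Fefferman--Stein bound \eqref{ma2} to the dyadic frequency blocks
$F_j:=\bigl(\int_1^2|\varphi(2^{-j}\tau D)f|^2\,d\tau/\tau\bigr)^{1/2}$.
But the scalar Peetre estimate acts at a single scale, so after discretization one obtains
\begin{equation*}
\int_1^2\bigl[(\varphi_{2^{-j}\tau}^\ast f)_a(x)\bigr]^2\,\frac{d\tau}{\tau}
\lesssim\int_1^2\bigl[M^{(\theta)}\bigl(\varphi(2^{-j}\tau D)f\bigr)(x)\bigr]^2\,\frac{d\tau}{\tau},
\end{equation*}
with the maximal operator \emph{inside} the $\tau$-integral, not $\bigl[M^{(\theta)}(F_j)(x)\bigr]^2$. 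One cannot permute $M^{(\theta)}$ and the inner $L^2(d\tau/\tau)$-norm pointwise---doing so is precisely the Fefferman--Stein phenomenon, and Assumption~\ref{a} only supplies the vector-valued inequality over the discrete index $j$, not over the continuum $\tau\in[1,2]$. So ``reduces to \eqref{ma2}'' is exactly the step that does not follow from what you have written. The paper resolves this with Lemma~\ref{ep} (Ullrich's Peetre-type estimate from \cite{lsuyy}), which bounds the Peetre supremum at a fixed scale by a convergent sum over scales $k$ of integrals of $|\varphi_{2^{-(k+l)}t}\ast f|^r$ against a decaying weight; after raising to the power $r=2\theta/s$, applying Minkowski's inequality (valid since $2/r>1$), and summing the annular decomposition of the weight, one lands on $M$ acting directly on the block square function $\bigl[\int_1^2|\cdot|^2\,d\tau/\tau\bigr]^{r/2}$, which is what \eqref{ma2} can digest. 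If you wish to avoid Lemma~\ref{ep}, you would instead need a genuinely Hilbert-space-valued Peetre/Plancherel--Polya inequality for $L^2(d\tau/\tau)$-valued band-limited functions, which is an extra ingredient not present in your sketch.

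A smaller point: the converse of part~(i) is asserted to ``proceed symmetrically.'' The paper instead records this direction as Proposition~\ref{b}, obtained by the same argument as the necessity half of the area-integral characterization in \cite{shyy}; your `subharmonic averaging' route may work, but as written it is no more than a gesture.
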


To prove Theorem \ref{g}, we need the following Lusin-area function
characterization of $H_X(\rn)$ obtained in \cite[Theorem 3.21]{shyy}.

\begin{lemma}\label{la}
Assume that $X$ is a ball quasi-Banach function space satisfying
\eqref{ma} and Assumption \ref{a2} with the same $s\in (0,1]$.
Let $\varphi\in\mathcal{S}(\rn)$ satisfy
$$\mathbf{1}_{B(\vec{0}_n,4)\setminus B(\vec{0}_n,2)}\leq\varphi\leq
\mathbf{1}_{B(\vec{0}_n,8)\setminus B(\vec{0}_n,1)}.$$
Then $f \in H_X(\rn)$ if and only if $f \in \mathcal{S}'(\rn)$, $f$ vanishes weakly at infinity and
$\left\|S(f)\right\|_X<\infty$. Moreover, for any $f\in H_X(\rn)$,
$$\|f\|_{H_X(\rn)}\sim\left\|S(f)\right\|_X,$$
where the positive equivalence constants are independent of $f$.
\end{lemma}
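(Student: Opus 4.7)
The plan is to use the atomic decomposition of $H_X(\rn)$ as the central bridge between the grand-maximal-function definition and the Lusin-area characterization. Concretely, one first establishes that any $f\in H_X(\rn)$ admits a decomposition $f=\sum_j \lambda_j a_j$ in $\cs'(\rn)$, where each $a_j$ is an $(X,q,d)$-atom supported in a ball $B_j$ (with $q$ compatible with Assumption \ref{a2} and $d$ chosen sufficiently large), and the coefficients satisfy $\|\{\lambda_j\mathbf{1}_{B_j}/\|\mathbf{1}_{B_j}\|_X\}_j\|_X \ls \|f\|_{H_X(\rn)}$. Both directions of the claimed equivalence then reduce to analyzing $S$ on atoms and, conversely, to reconstructing atoms from the tent-space decomposition of $(y,t)\mapsto\varphi(tD)f(y)$.

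For the forward estimate $\|S(f)\|_X\ls \|f\|_{H_X(\rn)}$, I would estimate $S(a)$ for a single $(X,q,d)$-atom $a$ supported on a ball $B$ with radius $r_B$ and center $x_B$. Near the support, the $L^2(\rn)$-boundedness of $S$ (via Plancherel and the bump condition on $\varphi$) combined with a H\"older-type estimate adapted to $X$ gives $\|\mathbf{1}_{2B} S(a)\|_X \ls \|\mathbf{1}_B\|_X^{-1}\cdot \|\mathbf{1}_{2B}\|_X$. Far from the support, the vanishing moments of $a$ up to order $d$ yield the pointwise decay $S(a)(x)\ls [r_B/|x-x_B|]^{d+n+1}\|\mathbf{1}_B\|_X^{-1}$, which telescopes across dyadic annuli $2^k B\setminus 2^{k-1} B$. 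Assembling these pieces into a control on $\|\sum_j \lambda_j S(a_j)\|_X$ is then carried out via the Fefferman--Stein vector-valued maximal inequality \eqref{ma}, choosing the exponents $(\theta,s)$ so that the annular decay overcomes the growth of $\|\mathbf{1}_{2^k B_j}\|_X/\|\mathbf{1}_{B_j}\|_X$.

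For the converse $\|f\|_{H_X(\rn)}\ls \|S(f)\|_X$, I would invoke a Calder\'on reproducing formula $f=c_\varphi\int_0^\infty \psi(tD)\varphi(tD)f\,\frac{dt}{t}$, valid in $\cs'(\rn)$ because $f$ vanishes weakly at infinity, together with a tent-space atomic decomposition of the function $(y,t)\mapsto\varphi(tD)f(y)$ in the $X$-based tent space $T_X^2(\mathbb{R}^{n+1}_+)$ built from the condition $\|S(f)\|_X<\fz$. Substituting the tent-space atoms into the reproducing integral produces an atomic expansion $f=\sum_j \lambda_j a_j$ of $f$ by $(X,\fz,d)$-atoms with $\|\{\lambda_j\mathbf{1}_{B_j}/\|\mathbf{1}_{B_j}\|_X\}_j\|_X \ls \|S(f)\|_X$. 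Combining this with the known atomic bound on the grand maximal function then yields $\|M_b^{\ast\ast}(f,\Phi)\|_X \ls \|S(f)\|_X$.

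The main obstacle will be developing tent-space theory adapted to a general ball quasi-Banach function space and closing the atomic decomposition in that generality. In the classical $L^p$ setting, one uses the Coifman--Meyer--Stein stopping-time argument on the nontangential sets to decompose tent-space functions into $\fz$-atoms, but in the abstract $X$ setting the analogous argument requires the additional structure built into Assumption \ref{a2}: that $X^{1/s}$ is a ball Banach function space (so that the associate space $(X^{1/s})'$ is well defined) and that $M^{((q/s)')}$ is bounded on $(X^{1/s})'$. These conditions are precisely what permit the reduction of $X$-norm estimates to duality arguments against $L^q$-normalized atoms and ensure that the grand maximal function of each atom is controlled in the correct $X$-norm, thereby closing the loop of equivalences.
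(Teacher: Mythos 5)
This lemma is not actually proved in the paper; it is quoted verbatim from \cite[Theorem 3.21]{shyy}, and the paper relies on that external proof. So there is no internal argument to compare against. That said, your sketch is essentially the standard route and indeed the one taken in the cited reference; the very infrastructure it requires is exactly what the paper assembles around the lemma: Lemma \ref{atom} (atomic decomposition of $H_X(\rn)$), Lemma \ref{r} (the $L^q$-atom-to-$X$-norm reconstruction estimate), and Lemma \ref{tad} (the atomic decomposition of the $X$-tent space). Your forward direction (near-support $L^2$ estimate plus far-field pointwise decay from vanishing moments, summed via the vector-valued maximal inequality \eqref{ma}) and your converse (Calder\'on reproducing formula plus tent-space atoms becoming $(X,\infty,d)$-molecules or atoms) are both correct in outline.

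Two technical corrections. First, you cannot choose the exponents $(\theta,s)$ so that ``the annular decay overcomes the growth''; $\theta$ and $s$ are fixed by the hypotheses \eqref{ma} and Assumption \ref{a2}. What you get to choose is the moment order $d$ of the atoms, and the requirement $d\ge d_X=\lceil n(1/\theta-1)\rceil$ is precisely what guarantees the far-field bound $S(a)(x)\lesssim \|\mathbf{1}_B\|_X^{-1}\,[M^{(\theta)}(\mathbf{1}_B)(x)]$, after which \eqref{ma} closes the sum. Second, the near-support step is not a ``H\"older-type estimate adapted to $X$'': since $X$ has only a quasi-norm and no a priori duality, the way one transfers the $L^q$-bound $\|\mathbf{1}_{2B}S(a)\|_{L^q}\lesssim |B|^{1/q}\|\mathbf{1}_{B}\|_X^{-1}$ into an $X$-norm bound on the full sum $\sum_j\lambda_j S(a_j)\mathbf{1}_{2B_j}$ is precisely Lemma \ref{r}, which in turn rests on Assumption \ref{a2}. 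With those two adjustments your route is sound and matches the cited proof.
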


By an argument similar to that used in the proof of the necessity part of
Lemma \ref{la}, we obtain the following proposition with the details omitted here.

\begin{proposition}\label{b}
Let $X$ be a ball quasi-Banach function space satisfying \eqref{ma} and Assumption \ref{a2}
with the same $s\in (0,1]$. If $f \in H_X(\rn)$, then $f \in \mathcal{S}'(\rn)$,
$f$ vanishes weakly at infinity and $g(f) \in X$. Moreover,
there exists a positive constant $C$ such that, for any $f\in H_X(\rn)$,
$$\|g(f)\|_X\le  C\|f\|_{H_X(\rn)}.$$
\end{proposition}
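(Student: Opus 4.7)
The conclusions that $f\in\cs'(\rn)$ and that $f$ vanishes weakly at infinity are immediate from the hypothesis $f\in H_X(\rn)$ together with Lemma \ref{la}. It therefore suffices to establish the quasi-norm inequality $\|g(f)\|_X\ls\|f\|_{H_X(\rn)}$. My plan is to mirror the necessity part of Lemma \ref{la}: first derive a pointwise domination of $g(f)$ by a Lusin-area function with a slightly enlarged aperture, then invoke the aperture-change estimate of Lemma \ref{af} (announced in the introduction) to reduce the aperture back to $1$, and finally close the argument using Lemma \ref{la}.

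For the pointwise step, since $\supp\varphi\subset B(\vec{0}_n,8)\setminus B(\vec{0}_n,1)$, each $\varphi(tD)f$ is band-limited at scale $1/t$. I would apply the Plancherel--P\'olya--Nikolskii local mean-value estimate for band-limited functions, first with a small exponent $\theta\in(0,2)$ (where it is classical and follows from Peetre maximal function bounds), and then use H\"older's inequality to pass to the exponent $2$. This would yield constants $\alpha\in[1,\fz)$ and $C\in(0,\fz)$, independent of $t$, $x$ and $f$, such that
\begin{equation*}
|\varphi(tD)f(x)|^2\le\frac{C}{|B(x,t)|}\int_{B(x,\alpha t)}|\varphi(tD)f(y)|^2\,dy
\end{equation*}
for every $x\in\rn$ and $t\in(0,\fz)$.

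Integrating this estimate against $dt/t$ and applying Fubini's theorem would give the pointwise bound
\begin{equation*}
[g(f)(x)]^2\ls\int_0^\infty\int_{|y-x|<\alpha t}|\varphi(tD)f(y)|^2\,\frac{dy\,dt}{t^{n+1}}\sim [\ca_\alpha(f)(x)]^2,
\end{equation*}
where $\ca_\alpha$ denotes the Lusin-area function with aperture $\alpha$. Taking $X$-quasi-norms on both sides, then applying the aperture-change estimate of Lemma \ref{af} to absorb the factor $\alpha$, and finally invoking Lemma \ref{la}, would produce
\begin{equation*}
\|g(f)\|_X\ls\|\ca_\alpha(f)\|_X\ls\|S(f)\|_X\sim\|f\|_{H_X(\rn)},
\end{equation*}
as required.

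I expect the main technical hurdle to be the quantitative Plancherel--P\'olya-type bound above, in particular the verification that its constants are independent of the scale $t$; by rescaling $\varphi(tD)f$ to the unit scale this reduces to a single, scale-free local mean-value inequality for band-limited functions, but care is needed when passing from the classical $L^\theta$ statement (with $\theta<1$) to the $L^2$ statement, which is what the squared $g$-function requires. Once this ingredient is in place, the remainder of the proof is bookkeeping of apertures together with two citations to lemmas already stated in the paper.
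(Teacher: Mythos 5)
Your plan to reduce $g(f)$ to a finite-aperture Lusin-area function and then invoke Lemma \ref{af} is attractive, but it hinges on the pointwise ``local mean-value'' estimate
\begin{equation*}
|\varphi(tD)f(x)|^2\le\frac{C}{|B(x,t)|}\int_{B(x,\alpha t)}|\varphi(tD)f(y)|^2\,dy,
\end{equation*}
and this is \emph{not} the Plancherel--P\'olya/Peetre estimate. What is classical (and what actually follows from the Peetre maximal function machinery, see Lemma \ref{ep} of this very paper, quoted from \cite{lsuyy}) is an estimate with a \emph{polynomial tail}: for band-limited functions one has
\begin{equation*}
|\varphi(tD)f(x)|^r\lesssim t^{-n}\int_{\rn}\frac{|\varphi(tD)f(y)|^r}{(1+|x-y|/t)^{ar}}\,dy,
\end{equation*}
and no amount of enlarging $\alpha$ lets you replace the factor $(1+|x-y|/t)^{-ar}$ by the compactly supported indicator $\mathbf{1}_{B(x,\alpha t)}$. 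The supremum hidden inside the Hardy--Littlewood maximal function (over \emph{all} radii, not just $\rho\le\alpha t$) is essential, and dropping the tail is precisely the step that cannot be justified. So the difficulty you flag at the end is real, but you have misdiagnosed it: the problem is not in passing from a small exponent $\theta$ to $2$ (your H\"older step is harmless), it is that the claimed inequality is already wrong at exponent $\theta$.

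A correct version of your idea would keep the tail, decompose $\rn$ into dyadic rings around $x$, and obtain
\begin{equation*}
[g(f)(x)]^2\lesssim\sum_{k=0}^{\infty}2^{-kM}\bigl[\ca_{2^{k}}\bigl(\varphi(tD)f\bigr)(x)\bigr]^2
\end{equation*}
for an $M$ that can be made arbitrarily large. One could then take $X$-quasi-norms, apply Lemma \ref{af} to each aperture $2^k$, and sum the geometric series, in close analogy with how the paper itself handles the $g_\lambda^\ast$-direction in the proof of Theorem \ref{g}(ii). This is workable under the hypotheses of Proposition \ref{b}, but it is a substantially heavier argument than ``a single pointwise bound followed by bookkeeping.'' By contrast, the paper's own (omitted) proof is modelled on the necessity part of Lemma \ref{la} from \cite[Theorem 3.21]{shyy}, which proceeds through the atomic decomposition of $H_X(\rn)$ and the estimate of $g$ applied to individual atoms, and thus sidesteps the Peetre machinery entirely on this half of the argument.
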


Moreover, to show Theorem \ref{g}(i), we need the following maximal function
of Peetre type, which is motivated by Ullrich \cite{u} and Yan et al. \cite{yyyz}.

\begin{definition}
Let $\phi\in\mathcal{S}(\rn)$.
For any $t,\ a\in(0,\infty)$, $f \in \mathcal{S}'(\rn)$ and $x\in\rn$, define
\begin{equation}\label{p}
(\phi_t^\ast f)_a(x):=\sup_{y\in\rn}\frac{|\phi(tD)(f)(x+y)|}{(1+|y|/t)^a}
\end{equation}
and
$$g_{a,\ast}(f)(x):=\left\{\int_0^\infty\lf[(\phi_t^\ast f)_a(x)\r]^2\,\frac{dt}{t}\right\}^{\frac{1}{2}}.$$
\end{definition}

The following estimate, obtained in \cite[Lemma 3.5]{lsuyy}, is needed in the proof of Theorem \ref{g}.

\begin{lemma}\label{ep}
Let $\varepsilon \in (0,\infty)$, $R,\ N_0\in\mathbb{N}$ and $\phi\in\mathcal{S}(\rn)$
satisfy $|\mathcal{F}(\phi)(\xi)| >0$ on $\{\xi \in \rn: \varepsilon/2<|\xi|<2\varepsilon\}$ and
$D^\alpha(\mathcal{F}(\phi))({\vec 0}_n)=0$ for any $\alpha\in\zz_+^n$ and $|\alpha|\le  R$.
Then, for any $t\in[1,2]$, $a \in (0,N_0]$, $l\in\zz$, $f \in \mathcal{S}'(\rn)$ and $x\in\rn$,
there exists a positive constant $C_{(N_0,r)}$, depending only on $N_0$ and $r$, such that
\begin{equation*}
\sup_{y\in\rn}\frac{|\phi_{2^{-l}t}\ast f(x+y)|^r}{(1+|y|/(2^{-l}t))^{ar}}
\le  C_{(N_0,r)}\sum_{k=0}^\infty2^{-kN_0r}2^{(k+l)n}\int_{\rn}
\frac{|\phi_{2^{-(k+l)}t}\ast f(y)|^r}{(1+2^l|x-y|/t)^{ar}}\,dy.
\end{equation*}
\end{lemma}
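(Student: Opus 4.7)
The plan is to combine a discrete Calder\'on reproducing formula adapted to the scale $2^{-l}t$ (using the Tauberian and vanishing-moment hypotheses on $\phi$ together with the Schwartz decay of $\mathcal{F}(\phi)$) with the Plancherel--Polya--Nikol'skij (PPN) maximal inequality for band-limited distributions.

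First I would establish a reproducing formula of the form
\begin{equation*}
\phi_{2^{-l}t}\ast f(z)=\sum_{k=0}^\infty 2^{-kM}\,\Xi_{k,2^{-(k+l)}t}\ast\phi_{2^{-(k+l)}t}\ast f(z)\quad\text{in }\cs'(\rn),
\end{equation*}
valid for every $f\in\cs'(\rn)$, where $\{\Xi_k\}_{k\ge0}\subset\cs(\rn)$ have $\mathcal{F}(\Xi_k)$ supported in a dyadic annulus $\{|\xi|\sim 2^k\}$ and obey uniform Schwartz bounds independent of $k,l,t$. This starts from a standard Rychkov--Ullrich-type discrete Calder\'on formula $f=\sum_j\psi_{j,2^{-(j+l)}t}\ast\phi_{2^{-(j+l)}t}\ast f$, which is available because $|\mathcal{F}(\phi)|>0$ on $\{\varepsilon/2<|\xi|<2\varepsilon\}$ (inverting a dyadic frequency partition of unity) and because the vanishing moments $D^\alpha\mathcal{F}(\phi)(\vec{0}_n)=0$ for $|\alpha|\le R$ close the formula in $\cs'(\rn)$ modulo polynomials. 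Convolving with $\phi_{2^{-l}t}$ produces kernels whose Fourier transforms $\mathcal{F}(\phi)(2^{-l}t\xi)\mathcal{F}(\psi)(2^{-(j+l)}t\xi)$ are supported on $|\xi|\sim 2^{j+l}/t$; there $\mathcal{F}(\phi)$ is evaluated at $|\eta|=|2^{-l}t\xi|\sim 2^j$, so the Schwartz decay yields $|\mathcal{F}(\phi)(\eta)|\lesssim(1+|\eta|)^{-M}\lesssim 2^{-jM}$ for any prescribed $M\in\nn$. Factoring this scale-mismatch decay out of the kernel produces the $\Xi_k$ with the claimed $2^{-kM}$ prefactor.

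Next, each summand $u_k:=\Xi_{k,2^{-(k+l)}t}\ast\phi_{2^{-(k+l)}t}\ast f$ is band-limited at scale $R_k:=2^{k+l}/t$, so the PPN inequality gives, for $r\in(0,\infty)$ and $a>n/r$,
\begin{equation*}
\sup_{y\in\rn}\frac{|u_k(x+y)|^r}{(1+R_k|y|)^{ar}}\lesssim R_k^n\int_{\rn}\frac{|u_k(y')|^r}{(1+R_k|x-y'|)^{ar}}\,dy',
\end{equation*}
and a standard convolution-with-Schwartz-kernel estimate using $|\Xi_k(w)|\lesssim(1+|w|)^{-M'}$ further bounds the right-hand side by $C\cdot 2^{(k+l)n}\int_{\rn}|\phi_{2^{-(k+l)}t}\ast f(y')|^r(1+R_k|x-y'|)^{-ar}\,dy'$. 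I then raise $|\phi_{2^{-l}t}\ast f(x+y)|\le\sum_k 2^{-kM}|u_k(x+y)|$ to the $r$-th power [using $(\sum b_k)^r\le\sum b_k^r$ when $r\in(0,1]$, or the H\"older-type inequality $(\sum 2^{-kM}|u_k|)^r\lesssim\sum 2^{-kM}|u_k|^r$ when $r>1$], divide by $(1+|y|/(2^{-l}t))^{ar}=(1+2^l|y|/t)^{ar}$, and take the sup over $y$. The elementary estimate $1+R_k|y|\le 2^k(1+2^l|y|/t)$ yields the weight swap $(1+2^l|y|/t)^{-ar}\le 2^{kar}(1+R_k|y|)^{-ar}$, at the cost of a factor $2^{kar}$ which is absorbed by $2^{-kM}$ once $M\ge (N_0+a)\max\{1,r\}$; meanwhile $R_k\ge 2^l/t$ gives $(1+R_k|x-y'|)^{-ar}\le(1+2^l|x-y'|/t)^{-ar}$, coarsening the right-hand weight for free. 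Collecting, the $k$-th term assembles into $2^{-kN_0 r}\cdot 2^{(k+l)n}\int_{\rn}|\phi_{2^{-(k+l)}t}\ast f(y')|^r(1+2^l|x-y'|/t)^{-ar}\,dy'$, which is exactly the right-hand side of Lemma \ref{ep}.

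The main obstacle is the first step: producing the reproducing formula with the \emph{quantitative} $2^{-kM}$ decay per scale, and doing so in $\cs'(\rn)$ where the polynomial ambiguity must be killed by the vanishing moments. This is the technical heart of the proof and simultaneously invokes all three structural hypotheses on $\phi$ (Tauberian non-vanishing, vanishing moments, and Schwartz decay); once it is in place, the PPN inequality together with routine weight bookkeeping finishes the argument.
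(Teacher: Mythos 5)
Your plan identifies the correct ingredients (a Rychkov--Ullrich reproducing formula with per-scale $2^{-kM}$ decay, Peetre-type maximal control, and weight bookkeeping), and the final weight-swap arithmetic is correct, but there is a genuine gap in the step that passes from $u_k=\Xi_{k,\cdot}\ast\phi_{2^{-(k+l)}t}\ast f$ back to $g_k:=\phi_{2^{-(k+l)}t}\ast f$ inside the integral. You invoke a ``standard convolution-with-Schwartz-kernel estimate'' to claim
$$
R_k^{\,n}\int_{\rn}\frac{|\Xi_{k,R_k^{-1}}\ast g_k(y')|^r}{(1+R_k|x-y'|)^{ar}}\,dy'
\;\lesssim\;
R_k^{\,n}\int_{\rn}\frac{|g_k(y')|^r}{(1+R_k|x-y'|)^{ar}}\,dy',
$$
but this is only a Jensen/Minkowski-type inequality and requires $r\ge1$. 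For $r\in(0,1)$ — which is exactly the regime used in the paper, since $r=2\theta/s$ with $0<\theta<s\le1$ can be either side of $1$ — it is \emph{false}: the elementary inequality $(\sum_k b_k)^r\le\sum_k b_k^r$ has no continuous-integral analogue, because $t\mapsto t^r$ is concave and Jensen reverses. A concrete counterexample is to take $g_k$ a narrow bump of width $\varepsilon$: the left side scales like $\varepsilon^{nr}$, the right like $\varepsilon^n$, and $\varepsilon^{n(r-1)}\to\infty$ as $\varepsilon\to0$. Since $g_k$ is not band-limited (only $u_k$ is), you also cannot side-step this by invoking PPN a second time on $g_k$.

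The classical way past this obstruction — the one underlying the reference [LSUYY, Lemma 3.5] and Ullrich's Lemma A.3 / Rychkov's argument — is not to apply PPN at all, but to substitute the factorization $|g_k(w)|=|g_k(w)|^r\,|g_k(w)|^{1-r}$ directly into the pointwise bound $|\phi_{2^{-l}t}\ast f(x+y)|\lesssim\sum_k2^{-kM}\int|\Lambda_{k,l,t}(x+y-w)|\,|g_k(w)|\,dw$, estimate $|g_k(w)|^{1-r}\le[(\phi^\ast_{2^{-(k+l)}t}f)_a(x)]^{1-r}(1+2^{k+l}|w-x|/t)^{a(1-r)}$, split the resulting weight, and obtain a self-improving inequality in which the Peetre maximal function at different scales appears to the power $1-r$ on the right. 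One then needs a preliminary argument (using $f\in\cS'(\rn)$) to ensure the Peetre maxima are finite so the $1-r$ powers can be absorbed. This ``$r$ versus $1-r$'' bootstrap is the technical heart that your PPN-based plan bypasses, and without it the case $r<1$ does not close. A secondary, minor point: your PPN step also imposes $a>n/r$, whereas the lemma is stated for all $a\in(0,N_0]$; the bootstrap argument does not need that restriction.
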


Furthermore, to deal with the $g_\lambda^\ast$-function characterization of $H_X(\rn)$,
we use a method different from that used in the proof of the $g$-function characterization of $H_X(\rn)$,
which is motivated by \cite{a11}.
We first recall some notions of \emph{X-tent spaces}.

\begin{definition}
Let $\alpha \in (0,\infty)$. For any $x\in\rn$,
the \emph{cone} $\Gamma_\alpha(x)$ of aperture $\alpha$ with vertex $x$
is defined by setting
$$\Gamma_\alpha(x):=\lf\{(y,t)\in\mathbb{R}^{n+1}_+:\ |x-y|<\alpha t\r\}.$$
Moreover, for any ball $B(x,r)\subset\rn$ with $x\in\rn$ and $r\in(0,\fz)$, let
$$T_\alpha(B):=\lf\{(y,t)\in\mathbb{R}^{n+1}_+:\ 0<t<r/\alpha,\ |y-x|<r-\alpha t\r\}.$$
When $\alpha:=1$, we denote $\Gamma_\alpha(x)$ and $T_\alpha(B)$ simply,
respectively, by $\Gamma(x)$ and $T(B)$.
\end{definition}

Let $\alpha,\ p\in(0,\fz)$ and
$g:\ \mathbb{R}^{n+1}_+\to \mathbb{C}$ be a measurable function.
Then the \emph{Lusin-area function} $\ca_\alpha(g)$, with aperture $\alpha$,
is defined by setting, for any $x\in\rn$, $$\ca_\alpha(g)(x):=\left\{\int_{\Gamma_\alpha(x)}|g(y,t)|^2\,
\frac{dydt}{t^{n+1}}\right\}^{\frac{1}{2}}.$$
A measurable function $g$ is said to belong to the \emph{tent space}
$T_2^{p,\alpha}(\mathbb{R}^{n+1}_+)$  if
$$\|g\|_{T_2^{p,\alpha}(\mathbb{R}^{n+1}_+)}:=\lf\|\ca_\alpha(g)\r\|_{\lp}<\infty.$$
Recall that Coifman et al. \cite{cms} introduced the tent space $T_2^{p,\alpha}(\mathbb{R}^{n+1}_+)$
for any $p\in (0,\fz)$ and $\alpha:=1$.
For any given ball quasi-Banach function space $X$, the \emph{$X$-tent space} $T_X^\alpha(\mathbb{R}^{n+1}_+)$,
with aperture $\alpha$,  is defined to be the set of all measurable functions $g:\ \mathbb{R}^{n+1}_+\to\mathbb{C}$ such that
$\|g\|_{T_X^\alpha(\mathbb{R}^{n+1}_+)}:=\|\ca_\alpha(g)\|_X <\infty$.

\begin{definition}
Let $p\in(1,\infty)$ and $\alpha\in(0,\infty)$. A measurable function $a:\ \mathbb{R}^{n+1}_+\to\mathbb{C}$
is called a \emph{$(T_X^\alpha,p)$-atom}
if there exists a ball $B\subset\rn$ such that
\begin{enumerate}
\item[{\rm{(i)}}] $\supp(a):=\{(x,t)\in\rr^{n+1}_+:\ a(x,t)\neq0\}\subset T_\alpha(B)$;
\item[{\rm{(ii)}}] $\|a\|_{T_2^{p,\alpha}(\mathbb{R}^{n+1}_+)}
\leq|B|^{1/p}\|\mathbf{1}_B\|_X^{-1}$.
\end{enumerate}
Moreover, if $a$ is a $(T_X^\alpha,p)$-atom for any $p\in(1,\infty)$,
then $a$ is called a \emph{$(T_X^\alpha,\infty)$-atom}.
\end{definition}

The following lemma is a direct conclusion of the definition of $(T_X^\alpha,p)$-atoms;
the details are omitted here.

\begin{lemma}\label{8.13}
Let $p\in (1,\infty)$ and $\alpha \in (0,\infty)$. Then,
for any $(T_X^\alpha,p)$-atom $a$ supported in $T_\alpha(B)$, $\ca_\alpha(a)$ is
supported in $B$ and $\|\ca_\alpha(a)\|_{\lp}\leq|B|^{1/p}\|\mathbf{1}_B\|_X^{-1}$.
\end{lemma}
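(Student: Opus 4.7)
The plan is to verify both assertions directly from the definitions of the cone $\Gamma_\alpha$, the tent $T_\alpha(B)$, the Lusin-area function $\ca_\alpha$, and the notion of a $(T_X^\alpha,p)$-atom, without invoking any deeper machinery.

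First I would prove the support statement by a triangle-inequality argument. Write $B=B(x_0,r)$ and suppose $z\in\rn$ is such that $\ca_\alpha(a)(z)\neq 0$. Then, by the definition of $\ca_\alpha$, the cone $\Gamma_\alpha(z)$ must intersect $\supp(a)$, which is contained in $T_\alpha(B)$. Hence there exists $(y,t)\in\Gamma_\alpha(z)\cap T_\alpha(B)$, that is,
\[
|z-y|<\alpha t,\qquad 0<t<r/\alpha,\qquad |y-x_0|<r-\alpha t.
\]
Adding the first and third inequalities via the triangle inequality yields
\[
|z-x_0|\le |z-y|+|y-x_0|<\alpha t+(r-\alpha t)=r,
\]
so $z\in B$. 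Taking contrapositives, $\ca_\alpha(a)$ vanishes outside $B$, which proves $\supp(\ca_\alpha(a))\subset B$.

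For the norm bound, I would simply unfold the definition of the tent space norm: by construction,
\[
\|\ca_\alpha(a)\|_{L^p(\rn)}=\|a\|_{T_2^{p,\alpha}(\mathbb{R}^{n+1}_+)}.
\]
Since $a$ is a $(T_X^\alpha,p)$-atom, condition (ii) in the definition gives
\[
\|a\|_{T_2^{p,\alpha}(\mathbb{R}^{n+1}_+)}\le |B|^{1/p}\|\mathbf{1}_B\|_X^{-1},
\]
which is exactly the desired inequality.

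There is no real obstacle here: the lemma is, as the paper itself notes, a direct consequence of the definitions, and the only non-trivial observation is the geometric fact that the $\alpha$-cone over any point outside $B$ cannot meet the $\alpha$-tent $T_\alpha(B)$, which is precisely the content of the triangle-inequality step above.
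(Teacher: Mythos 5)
Your proof is correct and is exactly the argument the paper has in mind when it states that the lemma "is a direct conclusion of the definition" and omits the details: the triangle-inequality computation $|z-x_0|\le|z-y|+|y-x_0|<\alpha t+(r-\alpha t)=r$ gives the support claim, and the norm bound is just the identity $\|\ca_\alpha(a)\|_{L^p(\rn)}=\|a\|_{T_2^{p,\alpha}(\mathbb{R}^{n+1}_+)}$ combined with condition (ii) in the definition of a $(T_X^\alpha,p)$-atom. Nothing further is needed.
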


\begin{definition}
Assume that $X$ is a ball quasi-Banach function space. Let $s\in(0,1]$,
$p\in(1,\fz)$, $\alpha\in[1,\infty)$, $\{\lambda_j\}_{j\in\nn}\subset [0,\infty)$
be a sequence and $\{a_j\}_{j\in\nn}$
a sequence of $(T_X^\alpha,p)$-atoms supported, respectively,  in
$\{T_\alpha(B_j)\}_{j\in\nn}$. Then define
$$\Lambda\lf(\{\lambda_j a_j\}_{j\in\nn}\r):=\left\|\left[\sum_{j=1}^\infty
\left(\frac{\lambda_j}{\|\mathbf{1}_{B_j}\|_X}\right)^s
\mathbf{1}_{B_j}\right]^{1/s}\right\|_X.$$
\end{definition}

For the $X$-tent space $T^1_X(\mathbb{R}^{n+1}_+)$,
we have the following atomic characterization, which was obtained in
\cite[Theorem 3.19]{shyy}.

\begin{lemma}\label{tad}
Let $X$ be a ball quasi-Banach function space and
$f:\ \mathbb{R}^{n+1}_+\to\mathbb{C}$ a measurable function.
Assume further that $X$ satisfies \eqref{ma} and Assumption \ref{a2} with the same $s\in (0,1]$. Then
$f\in T^1_X(\mathbb{R}^{n+1}_+)$ if and only if there exist
a sequence $\{\lambda_j\}_{j\in\nn}\subset[0,\infty)$ and a sequence $\{a_j\}_{j\in\nn}$
of $(T_X^1,\infty)$-atoms supported, respectively,  in $\{T(B_j)\}_{j\in\nn}$
such that, for almost every $(x,t)\in\mathbb{R}^{n+1}_+$,
$$f(x,t)=\sum_{j=1}^\infty\lambda_j a_j(x,t)\quad\text{and}\quad
|f(x,t)|=\sum_{j=1}^\infty\lambda_j |a_j(x,t)|.$$
Moreover,
$$\|f\|_{T^1_X(\mathbb{R}^{n+1}_+)}\sim\Lambda\lf(\{\lambda_j a_j\}_{j\in\nn}\r),$$
where the positive equivalence constants are independent of $f$,
but may depend on $s$.
\end{lemma}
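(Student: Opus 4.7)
My plan is to treat the two implications separately and to lean on the structural assumptions \eqref{ma} (Fefferman--Stein vector-valued maximal inequality) and Assumption \ref{a2} (so that $X^{1/s}$ is a ball Banach function space, hence the triangle inequality is available at exponent $s$).

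For the sufficiency direction, I first upgrade the atomic size condition. Taking $p\to\infty$ in the defining inequality of a $(T_X^1,p)$-atom and combining with Lemma \ref{8.13}, a $(T_X^1,\infty)$-atom $a_j$ supported in $T(B_j)$ satisfies the pointwise bound $\ca_1(a_j)\le \|\mathbf{1}_{B_j}\|_X^{-1}\,\mathbf{1}_{B_j}$. The subadditivity of the Lusin area functional yields $\ca_1(f)\le \sum_j \lambda_j\,\ca_1(a_j)\le \sum_j \lambda_j\|\mathbf{1}_{B_j}\|_X^{-1}\mathbf{1}_{B_j}$. Using the elementary inequality $(\sum_j b_j)^s\le \sum_j b_j^s$ valid for $s\in(0,1]$ and then the fact that $X^{1/s}$ is a ball Banach function space (Assumption \ref{a2}), I obtain
\begin{equation*}
\|f\|_{T_X^1(\mathbb{R}^{n+1}_+)}^s=\|[\ca_1(f)]^s\|_{X^{1/s}}\le\left\|\sum_{j=1}^\infty\lf(\frac{\lambda_j}{\|\mathbf{1}_{B_j}\|_X}\r)^s\mathbf{1}_{B_j}\right\|_{X^{1/s}}=\Lambda(\{\lambda_ja_j\}_{j\in\nn})^s,
\end{equation*}
which is the desired bound.

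For the necessity direction, I would follow the Coifman--Meyer--Stein scheme adapted to the $X$-scale. Given $f\in T^1_X(\mathbb{R}^{n+1}_+)$, set $O_k:=\{x\in\rn:\ \ca_1(f)(x)>2^k\}$ for $k\in\zz$ and choose a constant $\eta\in(0,1)$ small enough so that the enlarged sets $O_k^*:=\{x\in\rn:\ M(\mathbf{1}_{O_k})(x)>\eta\}$ are open and contain $O_k$ with controlled overlap. Apply a Whitney-type decomposition to each $O_k^*$ to obtain balls $\{B_{k,j}\}_{j\in\nn}$ with bounded overlap whose union is $O_k^*$, and then partition $\mathbb{R}^{n+1}_+$ into the tent-layers $(\widehat{O_k^*}\setminus\widehat{O_{k+1}^*})\cap T(B_{k,j})$, where $\widehat{O}$ denotes the usual tent over $O$. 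On each layer I define $a_{k,j}$ to be a normalized restriction of $f$, with normalization constant $\lambda_{k,j}\sim 2^k\|\mathbf{1}_{B_{k,j}}\|_X$ chosen precisely so that $a_{k,j}$ is a $(T_X^1,\infty)$-atom supported in $T(B_{k,j})$; here I use that on $\widehat{O_k^*}\setminus\widehat{O_{k+1}^*}$ the Carleson-type averages of $|f|^2$ are bounded by $C\,2^{2k}$ via a standard argument that trades a tent not containing $\widehat{O_{k+1}^*}$ against a pointwise estimate of $\ca_1(f)$ off $O_{k+1}^*$. Since every $(x,t)\in\mathbb{R}^{n+1}_+$ lies in exactly one such layer (for $\ca_1(f)$-almost every $x$ where $\ca_1(f)(x)<\infty$), the decomposition satisfies $f=\sum_{k,j}\lambda_{k,j}a_{k,j}$ \emph{and} $|f|=\sum_{k,j}\lambda_{k,j}|a_{k,j}|$ pointwise.

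The main obstacle is the matching lower bound $\Lambda(\{\lambda_{k,j}a_{k,j}\})\ls \|f\|_{T_X^1(\mathbb{R}^{n+1}_+)}$, which is exactly where the two Assumptions enter. Unwinding the definition and using $\lambda_{k,j}\sim 2^k\|\mathbf{1}_{B_{k,j}}\|_X$ together with the bounded overlap $\sum_j\mathbf{1}_{B_{k,j}}\ls \mathbf{1}_{O_k^*}$ provided by Whitney, the claim reduces to
\begin{equation*}
\left\|\left[\sum_{k\in\zz}2^{ks}\mathbf{1}_{O_k^*}\right]^{1/s}\right\|_X\ls \|\ca_1(f)\|_X.
\end{equation*}
Since $M(\mathbf{1}_{O_k})\ge\eta$ on $O_k^*$, I may write $\mathbf{1}_{O_k^*}\ls [M(\mathbf{1}_{O_k})]^{1/\theta}=M^{(\theta)}(\mathbf{1}_{O_k})$, whence $2^{ks}\mathbf{1}_{O_k^*}\ls [M^{(\theta)}(2^k\mathbf{1}_{O_k})]^s$. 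Invoking the Fefferman--Stein vector-valued maximal inequality \eqref{ma} reduces the above display to the pointwise inequality
\begin{equation*}
\sum_{k\in\zz}2^{ks}\mathbf{1}_{O_k}(x)\ls [\ca_1(f)(x)]^s,
\end{equation*}
which follows by summing the geometric series over $\{k:\ 2^k<\ca_1(f)(x)\}$. Finally, I need to argue convergence of $\sum_{k,j}\lambda_{k,j}a_{k,j}$ in $T_X^1(\mathbb{R}^{n+1}_+)$, which I will obtain by a standard truncation and monotone convergence argument using the finiteness of $\|f\|_{T_X^1(\mathbb{R}^{n+1}_+)}$ together with the $X^{1/s}$-triangle inequality applied to partial sums. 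The technical heart is the simultaneous use of \eqref{ma} to pass from $O_k^*$ back to $O_k$ and of the $p$-convex structure $s\in(0,1]$ to linearize the outer $X$-norm via $X^{1/s}$; this is the one step where I expect the bookkeeping with $\theta$ and $s$ to require the most care.
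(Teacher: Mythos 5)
This lemma is not proved in the paper; it is quoted verbatim from \cite[Theorem~3.19]{shyy}, so there is no in-text argument to compare against. Your outline is the standard Coifman--Meyer--Stein tent-space decomposition transplanted to the $X$-scale, which is the approach used in \cite{shyy}, and the blueprint is essentially correct. A few comments. On the sufficiency side, passing $p\to\infty$ in Lemma \ref{8.13} gives $\ca_1(a_j)\le\|\mathbf{1}_{B_j}\|_X^{-1}\mathbf{1}_{B_j}$ pointwise a.e., and then $s$-subadditivity together with the \emph{lattice monotonicity} of the $X^{1/s}$-quasi-norm (not the triangle inequality, as you write; only the order property (ii) is used) finishes. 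On the necessity side, the reduction of $\Lambda(\{\lambda_{k,j}a_{k,j}\})$ via bounded overlap to $\bigl\|[\sum_k 2^{ks}\mathbf{1}_{O_k^*}]^{1/s}\bigr\|_X$, the domination $\mathbf{1}_{O_k^*}\ls M^{(\theta)}(\mathbf{1}_{O_k})$, the Fefferman--Stein inequality \eqref{ma}, and the geometric summation $\sum_{2^k<\ca_1(f)(x)}2^{ks}\sim[\ca_1(f)(x)]^s$ fit together exactly as you describe. Two remarks. First, because the $a_{k,j}$ are normalized restrictions of $f$ to pairwise disjoint layers, at each $(x,t)$ at most one term in the sum is nonvanishing, so both displayed identities hold pointwise a.e. \emph{immediately}; the truncation/monotone-convergence paragraph at the end of your proposal is unnecessary. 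Second, the single genuine technical input you leave implicit is that the layer restrictions, normalized by $\lambda_{k,j}\sim 2^k\|\mathbf{1}_{B_{k,j}}\|_X$, are actually $(T_X^1,\infty)$-atoms, i.e.\ satisfy the $L^p$ estimate for \emph{every} $p\in(1,\infty)$ (equivalently a uniform bound on $\ca_1$ of the atom). This is not a triviality: a naive averaging over the good set $(O_{k+1}^*)^\complement$, whose relative density is guaranteed by the threshold $\eta$, naturally produces aperture-$2$ area functions, and one must use the CMS ``good-set'' averaging lemma (applied carefully to the closed set $F_{k+1}:=(O_{k+1}^*)^\complement$ with the correct truncated tents) to get back to $\ca_1(f)\le 2^{k+1}$ off $O_{k+1}$. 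This step is standard and is correctly identified by you as the place where a classical lemma is invoked, but it is the heart of the necessity direction and should be spelled out or cited precisely in a full write-up.
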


Using Lemma \ref{tad}, we obtain the following lemma,
which is an expansion of the aperture estimate for the classical
tent space $T^{p,\alpha}_2(\mathbb{R}^{n+1}_+)$,
obtained in \cite[(4)]{a11}, to the case of the $X$-tent space.

\begin{lemma}\label{af}
Let $\alpha \in [1, \infty)$ and $X$ be a ball quasi-Banach
function space satisfying
\eqref{ma} with some $0<\theta<s\le 1$ and Assumption \ref{a2} for some $q\in(1,\fz]$
and the same $s\in(0,1]$ as in \eqref{ma}.
Assume that $f:\ \mathbb{R}^{n+1}_+\to\mathbb{C}$ is a measurable function.
Then there exists a positive constant $C$, independent of $\alpha$ and $f$, such that
$$\lf\|\ca_\alpha(f)\r\|_{X}\le C\max\lf\{\alpha^{(\frac{1}{2}-\frac1q)n},1\r\}
\alpha^{\frac n\theta}\lf\|\ca_1(f)\r\|_{X}.$$
\end{lemma}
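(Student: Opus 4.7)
The plan is as follows. Without loss of generality assume $\|\ca_1(f)\|_X<\fz$, for otherwise the inequality is trivial. By Lemma \ref{tad}, decompose $f=\sum_{j\in\nn}\lambda_j a_j$, where each $a_j$ is a $(T_X^1,\fz)$-atom supported in $T(B_j)$ for some ball $B_j\subset\rn$, with $\Lambda(\{\lambda_j a_j\}_{j\in\nn})\sim\|\ca_1(f)\|_X$. Since $s\in(0,1]$, pointwise subadditivity yields $\ca_\alpha(f)^s\le\sum_{j\in\nn}\lambda_j^s\,\ca_\alpha(a_j)^s$. A direct geometric check using the support $T(B_j)$ shows that $\ca_\alpha(a_j)$ is supported in $\alpha B_j$ (if $(y,t)\in T(B_j)$ and $|y-x|<\alpha t$, then $|x-x_{B_j}|<\alpha t+(r_{B_j}-t)\le\alpha r_{B_j}$); moreover, since $a_j$ is a $(T_X^1,p)$-atom for every $p\in(1,\fz)$, Auscher's classical aperture estimate \cite[Theorem 1.1]{a11} applied at the level $L^q$ gives $\|\ca_\alpha(a_j)\|_{L^q(\rn)}\ls\alpha^{n\max\{1/q,1/2\}}\,|B_j|^{1/q}\,\|\mathbf{1}_{B_j}\|_X^{-1}$.

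To convert these per-atom $L^q$ bounds into the $X$-norm bound of $\ca_\alpha(f)$, the plan is to employ the associate-space duality $\|\cdot\|_{X^{1/s}}=\sup\{\int(\cdot)\,g:g\in(X^{1/s})',\ \|g\|_{(X^{1/s})'}\le1\}$, valid since Assumption \ref{a2} guarantees that $X^{1/s}$ is a ball Banach function space. For each admissible $g$, H\"older on $\alpha B_j$ with exponents $q/s$ and $(q/s)'$ combined with the $L^q$ bound above yields $\int\ca_\alpha(a_j)^s\,g\,\mathbf{1}_{\alpha B_j}\ls\alpha^{ns\max\{1/q,1/2\}}\,|B_j|^{s/q}\,\|\mathbf{1}_{B_j}\|_X^{-s}\,\|g\,\mathbf{1}_{\alpha B_j}\|_{L^{(q/s)'}}$, and this last factor is then dominated by $\alpha^{n/(q/s)'}\,\|M^{((q/s)')}(g)\,\mathbf{1}_{B_j}\|_{L^{(q/s)'}}$ via the elementary observation that the $\alpha B_j$-average of $|g|^{(q/s)'}$ is controlled pointwise on $B_j$ by $2^n M^{((q/s)')}(g)^{(q/s)'}$ (using $\alpha B_j\subset B(x,2\alpha r_{B_j})$ for $x\in B_j$). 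Summing over $j$, rewriting the result as $\int_{\rn}[M(h)]^{1-s/q}\,F\,dx$ with $h:=[M^{((q/s)')}(g)]^{(q/s)'}$ and $F:=\sum_j(\lambda_j/\|\mathbf{1}_{B_j}\|_X)^s\mathbf{1}_{B_j}$ (for which $\|F\|_{X^{1/s}}\sim\|\ca_1(f)\|_X^s$), and invoking the $X^{1/s}$--$(X^{1/s})'$ pairing together with Remark \ref{r2.2} (which, through Assumption \ref{a2}, yields the boundedness of $M$ on $[(X^{1/s})']^{1/(q/s)'}$), one arrives at $\|\ca_\alpha(f)^s\|_{X^{1/s}}\ls\alpha^{n[1+s\max\{0,1/2-1/q\}]}\,\|\ca_1(f)\|_X^s$. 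Taking the $1/s$-th root and using $\theta<s$ with $\alpha\ge1$ to dominate $\alpha^{n/s}\le\alpha^{n/\theta}$ then produces the claimed bound with factor $\max\{\alpha^{(1/2-1/q)n},1\}\,\alpha^{n/\theta}$.

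The main obstacle will be the careful exponent bookkeeping in the duality step: one must precisely combine Auscher's $L^q$-aperture factor $\alpha^{n\max\{1/q,1/2\}}$ with the maximal-function-comparison factor $\alpha^{n/(q/s)'}$, arriving at $\alpha^{n[1+s\max\{0,1/2-1/q\}]}$, and then absorb the dual contribution $\|[M(h)]^{1-s/q}\|_{(X^{1/s})'}$ into an absolute constant via the chain of convexification identities $([(X^{1/s})']^{1/(q/s)'})^{(q/s)'}=(X^{1/s})'$ together with the $M$-boundedness provided by Assumption \ref{a2}. The final repackaging of $\alpha^{n/s}$ as $\alpha^{n/\theta}$, elementary due to $\theta<s\le1$ and $\alpha\ge1$, is what allows the bound to be phrased uniformly in terms of the Fefferman--Stein parameter $\theta$ from Assumption \ref{a}.
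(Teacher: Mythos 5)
Your argument is correct, and it reaches the stated estimate by a route that is related to, but not the same as, the one in the paper. Both proofs begin identically: decompose $f$ via Lemma \ref{tad} into $(T_X^1,\infty)$-atoms, observe $\ca_\alpha(a_j)$ is supported in $\alpha B_j$, and invoke Auscher's aperture estimate \cite[Theorem 1.1]{a11} to get the per-atom bound $\|\ca_\alpha(a_j)\|_{L^q}\lesssim\alpha^{n\max\{1/2,1/q\}}|B_j|^{1/q}\|\mathbf{1}_{B_j}\|_X^{-1}$. Where you diverge is in converting these per-atom $L^q$ estimates into an $X$-norm bound: the paper renormalizes each $a_j$ into a $(T_X^\alpha,q)$-atom on the enlarged ball $\alpha B_j$, invokes Lemma \ref{r} on those enlarged cubes/balls as a black box, and then absorbs the resulting $\mathbf{1}_{\alpha B_j}$ factors into $\mathbf{1}_{B_j}$ via $\mathbf{1}_{\alpha B_j}\lesssim\alpha^{n/\theta}M^{(\theta)}(\mathbf{1}_{B_j})$ and the Fefferman--Stein inequality \eqref{ma}, producing the $\alpha^{n/\theta}$ factor. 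You instead unroll the duality machinery: pair $\ca_\alpha(f)^s$ against $g\in(X^{1/s})'$, apply H\"older on each $\alpha B_j$, and push the ball-enlargement onto the dual side via two successive maximal-function comparisons, reducing to the $X^{1/s}$--$(X^{1/s})'$ pairing with $F=\sum_j(\lambda_j/\|\mathbf{1}_{B_j}\|_X)^s\mathbf{1}_{B_j}$ and closing with two applications of \eqref{ma21} (hence Assumption \ref{a2} rather than \eqref{ma}). Since you work on the original balls $B_j$ throughout, you never need to invoke the $M^{(\theta)}$-comparison, and you obtain the slightly sharper exponent $\alpha^{n/s}\max\{\alpha^{(1/2-1/q)n},1\}$, which you then relax to $\alpha^{n/\theta}\max\{\alpha^{(1/2-1/q)n},1\}$ using $\theta<s$ and $\alpha\geq1$. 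The trade-off is that your proof is longer and re-derives inside the argument essentially what Lemma \ref{r} packages; the two compressed steps to make fully precise if you were to write this out are the second maximal-function comparison (converting $|B_j|^{s/q}\|M^{((q/s)')}(g)\mathbf{1}_{B_j}\|_{L^{(q/s)'}}$ into $\int_{B_j}[M(h)]^{1-s/q}$), and the Lorentz--Luxemburg-type norm equality $\|\cdot\|_{X^{1/s}}=\|\cdot\|_{(X^{1/s})''}$ that underpins the opening duality step.
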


Moreover, to prove Lemma \ref{af}, we need the following conclusion,
which was obtained in \cite[Theorem 2.10]{shyy}.

\begin{lemma}\label{r}
Assume that $X$ is a ball quasi-Banach function space satisfying
\eqref{ma} with some $s\in (0, 1]$ and Assumption \ref{a2} for some $q\in(1,\fz]$
and the same $s\in(0,1]$ as in \eqref{ma}. Let $\{a_j\}_{j=1}^\infty\subset L^q(\rn)$
be supported, respectively,  in cubes $\{Q_j\}^\infty_{j=1}$,
and a sequence $\{\lambda_j\}^\infty_{j=1}\subset[0,\fz)$ such that,
for any $j \in \nn$,
$$\|a_j\|_{L^q(\rn)}\leq\frac{|Q_j|^{1/q}}{\|\mathbf{1}_{Q_j}\|_X}$$
and
$$\left\|\left[\sum_{j=1}^\infty\left(\frac{\lambda_j}{\|\mathbf{1}_{Q_j}\|_X}\right)^s
\mathbf{1}_{Q_j}\right]^{1/s}\right\|_X<\infty.$$
Then $f=\sum_{j=1}^\infty \lambda_ja_j$ converges in $\mathcal{S}'(\rn)$ and
there exists a positive constant $C$, independent of $f$, such that
$$\|f\|_X\le C\left\|\left[\sum_{j=1}^\infty\left(\frac{\lambda_j}
{\|\mathbf{1}_{Q_j}\|_X}\right)^s\mathbf{1}_{Q_j}\right]^{1/s}\right\|_X.$$
\end{lemma}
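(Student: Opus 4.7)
The inequality will be proved by a K\"othe duality argument. Assumption \ref{a2} makes $X^{1/s}$ a ball Banach function space, so its associate space $(X^{1/s})'$ is well-defined and the identity $\||F|^s\|_{X^{1/s}}=\|F\|_X^s$ realises the left-hand side as a supremum over nonnegative $h\in(X^{1/s})'$ with $\|h\|_{(X^{1/s})'}\le1$. Since $s\in(0,1]$, the $s$-subadditivity
$$|f|^s\le\sum_{j=1}^\infty\lambda_j^s|a_j|^s$$
reduces the matter to bounding $\int_{\rn}\bigl(\sum_j\lambda_j^s|a_j|^s\bigr)h\,dx$ uniformly in all such testers $h$.

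\textbf{Per-atom estimate.} For each fixed $j$, H\"older's inequality with exponents $q/s$ and $(q/s)'$ (with the natural convention when $q=\infty$) applied on $Q_j$ gives
$$\int_{Q_j}|a_j|^s h\,dx\le\|a_j\|_{L^q(\rn)}^s\left(\int_{Q_j}h^{(q/s)'}\,dy\right)^{1/(q/s)'}.$$
For each $x\in Q_j$, the inclusion $Q_j\subset B(x,\sqrt n\,\ell(Q_j))$ yields
$$\left(\frac{1}{|Q_j|}\int_{Q_j}h^{(q/s)'}\,dy\right)^{1/(q/s)'}\lesssim M^{((q/s)')}(h)(x),$$
so after multiplying by $|Q_j|^{1/(q/s)'}$ and integrating the bound in $x\in Q_j$ (dividing by $|Q_j|$), we obtain $\bigl(\int_{Q_j}h^{(q/s)'}\bigr)^{1/(q/s)'}\lesssim |Q_j|^{-s/q}\int_{Q_j}M^{((q/s)')}(h)$. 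Combining with the atomic size estimate $\|a_j\|_{L^q(\rn)}\le|Q_j|^{1/q}/\|\mathbf{1}_{Q_j}\|_X$, the powers of $|Q_j|$ cancel exactly and we arrive at
$$\int_{Q_j}|a_j|^s h\,dx\lesssim\|\mathbf{1}_{Q_j}\|_X^{-s}\int_{Q_j}M^{((q/s)')}(h)(y)\,dy.$$

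\textbf{Summing and closing.} Multiplying by $\lambda_j^s$ and summing recognises the result as a pointwise pairing:
$$\int_{\rn}|f|^s h\,dx\lesssim\int_{\rn}\Biggl[\sum_{j=1}^\infty\Bigl(\frac{\lambda_j}{\|\mathbf{1}_{Q_j}\|_X}\Bigr)^s\mathbf{1}_{Q_j}(x)\Biggr]M^{((q/s)')}(h)(x)\,dx.$$
Applying the H\"older inequality in the duality between $X^{1/s}$ and $(X^{1/s})'$, followed by the maximal bound \eqref{ma21} on $(X^{1/s})'$, dominates this by
$$C\Biggl\|\sum_{j=1}^\infty\Bigl(\frac{\lambda_j}{\|\mathbf{1}_{Q_j}\|_X}\Bigr)^s\mathbf{1}_{Q_j}\Biggr\|_{X^{1/s}}\|h\|_{(X^{1/s})'}.$$
Taking the supremum in $h$ returns $\|f\|_X^s$ on the left-hand side, while the identity $\|g\|_{X^{1/s}}=\|g^{1/s}\|_X^s$ valid for nonnegative $g$, followed by extracting the $s$-th root, produces the stated quasi-norm bound.

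\textbf{Convergence in $\mathcal{S}'(\rn)$ and anticipated obstacle.} The very same chain applied to the tails $\sum_{j\ge N}\lambda_j a_j$ shows the partial sums $f_N:=\sum_{j\le N}\lambda_j a_j$ are Cauchy in $X$, because monotone convergence inside the ball Banach function space $X^{1/s}$ drives the $X^{1/s}$-norm of the truncated sums $\sum_{j\ge N}(\lambda_j/\|\mathbf{1}_{Q_j}\|_X)^s\mathbf{1}_{Q_j}$ to zero; the local integrability built into $X^{1/s}$ as a ball Banach function space then propagates this to $L^s_{\mathrm{loc}}(\rn)$-convergence, from which $\mathcal{S}'(\rn)$-convergence follows by a standard Schwartz-decay cutoff argument combined with the $L^q$-bound on each individual $a_j$. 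The main technical subtlety, and the step I expect to require the most care, is justifying the interchange of the supremum over $h$ with the infinite sum over $j$ in the duality step; I would handle this by first running the entire argument on a finite truncation $\sum_{j\le N}\lambda_j a_j$, which gives an estimate with a constant independent of $N$, and then passing to the limit through monotone convergence on both sides.
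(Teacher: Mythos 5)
Your duality argument for the quasi-norm estimate is sound and is the standard way such estimates are closed: writing $\|f\|_X^s=\||f|^s\|_{X^{1/s}}$, using $s$-subadditivity, testing against $h\in(X^{1/s})'$, extracting the powered maximal function $M^{((q/s)')}(h)$ via H\"older on each $Q_j$, and finishing with \eqref{ma21}. The $\sup$--$\sum$ interchange you flag at the end is also correctly handled by first truncating to $j\le N$ (where everything is finite and Fubini applies) and then letting $N\to\infty$ through property (iii) of quasi-Banach function spaces; that is not the true difficulty.

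The genuine gap is in the $\cs'(\rn)$-convergence argument. You claim that \emph{monotone convergence} forces $\bigl\|\sum_{j>N}(\lambda_j/\|\mathbf{1}_{Q_j}\|_X)^s\mathbf{1}_{Q_j}\bigr\|_{X^{1/s}}\to0$. Property (iii) of a ball (quasi-)Banach function space gives $\|G_N\|_{X^{1/s}}\uparrow\|G\|_{X^{1/s}}$ for the increasing partial sums $G_N$; it does \emph{not} give $\|G-G_N\|_{X^{1/s}}\to0$. That conclusion needs $X^{1/s}$ to have an absolutely continuous (quasi-)norm, which Lemma~\ref{r} does not assume — and indeed the Morrey scale, one of the central examples of this paper, is exactly the case where it fails (this is why Theorem~\ref{ccz-new} was needed as a workaround). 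The second step is also unsafe as stated: even granting Cauchyness of $f_N$ in $X$, the local integrability built into the ball Banach function space $X^{1/s}$ only yields convergence in $L^s_{\loc}(\rn)$ for $s\in(0,1]$, and for $s<1$ the space $L^s_{\loc}(\rn)$ does not embed into $\cs'(\rn)$; a limit in $L^s_{\loc}$ need not even be a distribution. The per-atom $L^q$ bound does not propagate to any locally uniform $L^q$ bound on the partial sums, so the proposed "Schwartz-decay cutoff argument" does not obviously close either. The $\cs'$-convergence therefore needs a genuinely different argument, and it is precisely this part of the lemma (not the quasi-norm bound) that requires the most care.
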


Now we prove Lemma \ref{af} by using Lemmas \ref{tad} and \ref{r}.

\begin{proof}[Proof of Lemma \ref{af}]
Without loss of generality, we may assume that $f \in T^1_X(\mathbb{R}^{n+1}_+)$.
Then $\ca_1(f)\in X$. By Lemma \ref{tad}, we know that there exist
a sequence $\{\lambda_j\}_{j\in\nn}\subset[0,\infty)$ and a sequence $\{a_j\}_{j\in\nn}$
of $(T_X^1,\infty)$-atoms supported, respectively,  in $\{T(B_j)\}_{j\in\nn}$
such that, for almost every $(x,t)\in\mathbb{R}^{n+1}_+$,
$$f(x,t)=\sum_{j=1}^\infty\lambda_j a_j(x,t)\quad\text{and}\quad\|f\|_{T^1_X(\mathbb{R}^{n+1}_+)}\sim
\Lambda\lf(\{\lambda_j a_j\}_{j\in\nn}\r).$$

Let $\alpha\in[1,\infty)$ and $a$ be a $(T_X^1,\infty)$-atom supported in $T(B)$.
Then, from \cite[Theorem 1.1]{a11}, we deduce that, for any $p\in(1,\fz)$,
there exists a positive constant $\wz{C}$, independent of $a$ and $\alpha$,
such that
$$\lf\|\ca_\alpha(a)\r\|_{L^p(\rn)}\le\wz{C}\max\lf\{\alpha^{\frac n2},
\alpha^{\frac np}\r\}\lf\|\ca_1(a)\r\|_{L^p(\rn)},
$$
which further implies that, for any $p\in(1,\fz)$,
\begin{equation}\label{ae20}
\lf\|\ca_\alpha(a)\r\|_{L^p(\rn)}\le\wz{C}\max\lf\{\alpha^{\frac n2-\frac np},
1\r\}\frac{\|\mathbf{1}_{\alpha B}\|_X}{\|\mathbf{1}_{B}\|_X}|\alpha B|^{\frac1p}
\|\mathbf{1}_{\alpha B}\|_X^{-1}.
\end{equation}
Moreover, it is easy to see that $T(B)\subset T_\alpha(\alpha B)$ and
hence $\supp(a)\subset T_\alpha(\alpha B)$,
which, combined with \eqref{ae20}, implies that, for any $p\in(1,\fz)$,
$\|\mathbf{1}_{B}\|_X\|\mathbf{1}_{\alpha B}\|_X^{-1}[\wz{C}\max\{\alpha^{\frac n2-\frac np},
1\}]^{-1} a$ is a $(T_X^\alpha,p)$-atom supported in $T_\alpha(\alpha B)$.
By this and Lemma \ref{8.13}, we find that, for any $j\in\nn$,
$\ca_\alpha(a_j)$ is supported in $\alpha B_j$ and
\begin{equation}\label{ae1}
\lf\|\ca_\alpha\lf(\frac{1}{\wz{C}\max\{\alpha^{\frac n2-\frac nq},
1\}}\frac{\|\mathbf{1}_{B_j}\|_X}{\|\mathbf{1}_{\alpha B_j}\|_X}a_j\r)\r\|_{L^q(\rn)}
\le |\alpha B_j|^{\frac1q}
\|\mathbf{1}_{\alpha B_j}\|_X^{-1}.
\end{equation}
Furthermore, from the definition of the Hardy--Littlewood maximal function,
it follows that, for any $j\in\nn$,
$$\mathbf{1}_{\alpha B_j}\ls\alpha^{\frac n\theta} M^{(\theta)}(\mathbf{1}_{B_j}),
$$
which, together with \eqref{ae1}, \eqref{ma} and Lemma \ref{r}, further implies that
\begin{align*}
\|\ca_\alpha(f)\|_X&\le\left\|\sum_{j=1}^\infty\lambda_j\ca_\alpha(a_j)\right\|_X\\
&=\wz{C}\max\lf\{\alpha^{\frac n2-\frac nq},1\r\}\left\|\sum_{j=1}^\infty\lambda_j
\frac{\|\mathbf{1}_{\alpha B_j}\|_X}{\|\mathbf{1}_{B_j}\|_X}\ca_\alpha\lf(\frac{1}{\wz{C}\max\{\alpha^{\frac n2-\frac nq},
1\}}\frac{\|\mathbf{1}_{B_j}\|_X}{\|\mathbf{1}_{\alpha B_j}\|_X}a_j\r)\right\|_X\\
&\lesssim\max\lf\{\alpha^{\frac n2-\frac nq},1\r\}\left\|\left[\sum_{j=1}^\infty
\left(\frac{\lambda_j\|\mathbf{1}_{\alpha B_j}\|_X/\|\mathbf{1}_{B_j}\|_X}{\|\mathbf{1}_{\alpha B_j}\|_X}\right)^s
\mathbf{1}_{\alpha B_j}\right]^{1/s}\right\|_X\\
&\sim\max\lf\{\alpha^{\frac n2-\frac nq},1\r\}\left\|\left[\sum_{j=1}^\infty\left(\frac{\lambda_j}
{\|\mathbf{1}_{B_j}\|_X}\right)^s\mathbf{1}_{\alpha B_j}\right]^{1/s}\right\|_X\\
&\lesssim\max\lf\{\alpha^{\frac n2-\frac nq},1\r\}\left\|\left\{\sum_{j=1}^\infty\left(\frac{\lambda_j}{\|\mathbf{1}_{B_j}\|_X}\right)^s
\left[\alpha^{\frac n\theta}M^{(\theta)}(\mathbf{1}_{B_j})\right]^s\right\}^{1/s}\right\|_X\\
&\lesssim\max\lf\{\alpha^{\frac n2-\frac nq},1\r\}\alpha^{\frac n\theta}\left\|\left[\sum_{j=1}^\infty\left(\frac{\lambda_j}
{\|\mathbf{1}_{B_j}\|_X}\right)^s\mathbf{1}_{B_j}\right]^{1/s}\right\|_X
\sim\max\lf\{\alpha^{(\frac12-\frac1q)n},1\r\}\alpha^{\frac n\theta} \|\ca_1(f)\|_X.
\end{align*}
This finishes the proof of Lemma \ref{af}.
\end{proof}

We now prove Theorem \ref{g} by using Proposition \ref{b}
and  Lemmas \ref{ep} and \ref{af}.
\begin{proof}[Proof of Theorem \ref{g}]
We first show (i), namely, $f\in H_X(\rn)$
if and only if $f\in\cs'(\rn)$, $f$ vanishes weakly at infinity and
$\lf\|g(f)\r\|_X<\infty$. Assume that $f\in H_X(\rn)$. Then, by Definition \ref{H}, we know that $f\in\cs'(\rn)$ and, from Proposition \ref{b},
it follows that $f$ vanishes weakly at infinity and
$\|g(f)\|_X\lesssim\|f\|_{H_X(\rn)} <\infty$.

Conversely, by Lemma \ref{la}, we just need to show that,
for any $f \in \mathcal{S}'(\rn)$ vanishing weakly at infinity and $\|g(f)\|_X<\infty$,
it holds true that
\begin{equation}\label{nn}
\left\|\left\{\int_{\Gamma(\cdot)}\lf|\varphi(tD)(f)(y)\r|^2\,
\frac{dydt}{t^{n+1}}\right\}^{\frac{1}{2}}\right\|_X\lesssim\|g(f)\|_X.
\end{equation}
To this end, it is easy to see that, for any $a\in (0,\infty)$ and almost every $x\in\rn$,
\begin{equation}\label{np}
\left\{\int_{\Gamma(x)}\lf|\varphi(tD)(f)(y)\r|^2\,
\frac{dydt}{t^{n+1}}\right\}^{\frac{1}{2}}\lesssim g_{a,\ast}(f)(x).
\end{equation}
Therefore, to show \eqref{nn}, it suffices to prove that, for some $a\in (0,\infty)$,
\begin{equation}\label{gg}
\lf\|g_{a,\ast}(f)\r\|_X\lesssim\|g(f)\|_X.
\end{equation}

Observe that $\varphi(tD)(f)=(\mathcal{F}^{-1}\varphi)_t\ast f$.
Thus, by Lemma \ref{ep} and the Minkowski inequality, we conclude that,
for any given $r\in(0,2)$, $N_0\in\nn$ and $a\in(0,N_0]$, which are determined later,
and for any $x\in\rn$,
\begin{align}\label{gaa}
&[g_{a,\ast}(f)(x)]^r\notag\\
&\hs=\left\{\sum_{j\in\zz}\int_1^2[(\varphi_{2^{-j}t}^\ast f)_a(x)]^2\,
\frac{dt}{t}\right\}^{\frac{r}{2}}\notag\\
&\hs\lesssim\left\{\sum_{j\in\zz}\int_1^2\left[\sum_{k=0}^\infty2^{-kN_0r}
2^{(k+j)n}\int_{\rn}\frac{|(\mathcal{F}^{-1}\varphi)_{2^{-(k+j)}t}\ast
f(y)|^r}{(1+2^j|x-y|/t)^{ar}}\,dy\right]^{\frac{2}{r}}\,\frac{dt}{t}
\right\}^{\frac{r}{2}}\notag\\
&\hs\lesssim\left[\sum_{j\in\zz}\left\{\sum_{k=0}^\infty2^{-kN_0r}2^{(k+j)n}\int_{\rn}
\frac{[\int_1^2|(\mathcal{F}^{-1}\varphi)_{2^{-(k+j)}t}\ast f(y)|^2
\frac{dt}{t}]^{\frac{r}{2}}}{(1+2^j|x-y|)^{ar}}\,dy\right\}^{\frac{2}{r}}
\right]^{\frac{r}{2}}\notag\\
&\hs\lesssim \sum_{k=0}^\infty2^{-k(N_0r-n)}\left[\sum_{j\in\zz}
2^{j\frac{2n}{r}}\left\{\int_{\rn}\frac{[\int_1^2|
(\mathcal{F}^{-1}\varphi)_{2^{-(k+j)}t}\ast f(y)|^2
\frac{dt}{t}]^{\frac{r}{2}}}{(1+2^j|x-y|)^{ar}}\,dy\right\}^{\frac{2}{r}}\right]^{\frac{r}{2}}\notag\\
&\hs\lesssim \sum_{k=0}^\infty2^{-k(N_0r-n)}\left[\sum_{j\in\zz}
2^{j\frac{2n}{r}}\left\{\sum_{i=0}^\infty2^{-iar}\int_{|x-y|\sim2^{i-j}}\left[\int_1^2|
(\mathcal{F}^{-1}\varphi)_{2^{-(k+j)}t}\ast f(y)|^2
\frac{dt}{t}\right]^{\frac{r}{2}}\,dy\right\}^{\frac{2}{r}}\right]^{\frac{r}{2}},
\end{align}
where $|x-y|\sim2^{i-j}$ means that $|x-y|<2^{-j}$ when $i=0$ or $2^{i-j-1}<|x-y|<2^{i-j}$
when $i\in\nn$. From \eqref{gaa} and the Minkowski inequality
again, we deduce that, for any given  $r\in(0,2)$ and any $x\in\rn$,
\begin{align}\label{gaa1}
&[g_{a,\ast}(f)(x)]^r\notag\\
&\hs\lesssim\sum_{k=0}^\infty2^{-k(N_0r-n)}\sum_{i=0}^\infty\left(\sum_{j\in\zz}
\left\{2^{-iar}2^{jn}\int_{|x-y|\sim2^{i-j}}\left[\int_1^2|
(\mathcal{F}^{-1}\varphi)_{2^{-(k+j)}t}\ast f(y)|^2
\frac{dt}{t}\right]^{\frac{r}{2}}\,dy\right\}^{\frac{2}{r}}\right)^{\frac{r}{2}}\notag\\
&\hs\lesssim\sum_{k=0}^\infty2^{-k(N_0r-n)}\sum_{i=0}^\infty2^{-iar+in}
\left\{\sum_{j\in\zz}\left[M\left(\left[\int_1^2|
(\mathcal{F}^{-1}\varphi)_{2^{-(k+j)}t}\ast f(y)|^2\frac{dt}{t}
\right]^{\frac{r}{2}}\right)\right]^{\frac{2}{r}}\right\}^{\frac{r}{2}}.
\end{align}
Let $r:=\frac{2\theta}{s}$, where $\theta$ and $s$ are as
in Assumptions \ref{a} and \ref{a2}.
Choose $N_0\in\nn$ and $a\in(0,N_0]$ large enough such that $N_0r-n>0$ and
$ar-n>0$. Then, by \eqref{gaa1}, we find that, for any $x\in\rn$,
\begin{align*}
g_{a,\ast}(f)(x)&\lesssim\left\{\sum_{j\in\zz}\left[M\left
(\left[\int_1^2|(\mathcal{F}^{-1}\varphi)_{2^{-j}t}\ast f(y)|^2\frac{dt}{t}\right]^{\frac{\theta}{s}}\right)\right]^{\frac{s}{\theta}}\right\}^{\frac{1}{2}}\\
&\lesssim\left\{\sum_{j\in\zz}\left[M^{(\theta)}\left
(\left[\int_1^2|(\mathcal{F}^{-1}\varphi)_{2^{-j}t}\ast f(y)|^2
\frac{dt}{t}\right]^{\frac{1}{s}}\right)\right]^s\right\}^{\frac{1}{2}}.
\end{align*}
From this and Assumption \ref{a}, it follows that
\begin{align*}
\lf\|[g_{a,\ast}(f)]^{\frac{2}{s}}\r\|_{X^{\frac{s}{2}}}&\lesssim\left\|
\left\{\sum_{j\in\zz}\left[M^{(\theta)}\left(\left[\int_1^2|
(\mathcal{F}^{-1}\varphi)_{2^{-j}t}\ast f(y)|^2\frac{dt}{t}\right]^{\frac{1}{s}}\right)\right]^s\right\}^{\frac{1}{s}}\right\|_{X^{\frac{s}{2}}}\\
&\lesssim \left\|\left\{\sum_{j\in\zz}\int_1^2|(\mathcal{F}^{-1}
\varphi)_{2^{-j}t}\ast f(y)|^2\frac{dt}{t}\right\}^{\frac{1}{s}}\right\|_{X^{\frac{s}{2}}}
\lesssim\lf\|[g(f)]^{\frac{2}{s}}\r\|_{X^{\frac{s}{2}}},
\end{align*}
which implies that \eqref{gg} holds true. This finishes the proof of (i).

We now prove (ii), namely, $f\in H_X(\rn)$ if and only
if $f\in\cs'(\rn)$, $f$ vanishes weakly at infinity and
$\|g_\lambda^\ast(f)\|_X<\infty$ for some $\lambda\in(\max\{2/\theta,
2/\theta+(1-2/q)\},\fz)$.
Assume that $f\in\mathcal{S}'(\rn)$ vanishes weakly at infinity and
$\|g_\lambda^\ast(f)\|_X<\infty$.
By Lemma \ref{la} and the fact that, for any $f\in \mathcal{S}'(\rn)$ and $x\in \rn$,
$S(f)(x)\ls g_\lambda^\ast(f)(x)$ (see, for instance, \cite[p.\,89, (19)]{s70}),
we  conclude that $f\in H_X(\rn)$ and $\|f\|_{H_X(\rn)}\ls\|g_\lambda^\ast(f)\|_X$.

Conversely, assume that $f \in H_X(\rn)$. Then, by Definition \ref{H},
we know that $f\in\cs'(\rn)$ and, for any $x \in \rn$, we have
\begin{align*}
g_\lambda^\ast(f)(x)&=\left\{\int_0^\infty\int_{\rn}\left(\frac{t}
{t+|x-y|}\right)^{\lambda n}\lf|\varphi(tD)(f)(x)\r|^2\,\frac{dydt}
{t^{n+1}}\right\}^\frac{1}{2}\\
&=\left\{\int_0^\infty\int_{|x-y|<t}\left(\frac{t}{t+|x-y|}
\right)^{\lambda n}\lf|\varphi(tD)(f)(x)\r|^2\,\frac{dydt}{t^{n+1}}\right.\\
&\hs+\sum_{m=0}^\infty\left.\int_0^\infty\int_{2^mt<|x-y|<2^{m+1}t}\left(\frac{t}{t+|x-y|}\right)^{\lambda n}\lf|\varphi(tD)(f)(x)\r|^2\,\frac{dydt}{t^{n+1}}\right\}^\frac{1}{2}\\
&\leq\left\{\lf[\ca_1\lf(\varphi(tD)(f)\r)(x)\r]^2+
\sum_{m=0}^\infty2^{-\lambda nm}\left[\ca_{2^{m+1}}\lf(\varphi(tD)(f)\r)(x)\right]^2
\right\}^{\frac{1}{2}}\\
&\lesssim \ca_1\lf(\varphi(tD)(f)\r)(x)+\sum_{m=0}^\infty
2^{\frac{-\lambda nm}{2}}\ca_{2^{m+1}}\lf(\varphi(tD)(f)\r)(x),
\end{align*}
which, combined with $\lambda\in(\max\{2/\theta,2/\theta+(1-2/q)\},\fz)$,
Lemmas \ref{la} and \ref{af}, and Assumption \ref{a2},
further implies that
\begin{align*}
\lf\|g_\lambda^\ast(f)\r\|_X^s &= \lf\|\lf[g_\lambda^\ast(f)\r]^s\r\|_{X^{1/s}}
\lesssim\left\|\left\{\ca_1(\varphi(tD)(f))+\sum_{m=0}^\infty
2^{\frac{-\lambda nm}{2}}\ca_{2^{m+1}}(\varphi(tD)(f))\right\}^s\right\|_{X^{1/s}}\\
&\lesssim \left\|\lf\{\ca_1(\varphi(tD)(f))\r\}^s\right\|_{X^{1/s}}
+\sum_{m=0}^\infty2^{\frac{-\lambda nms}{2}}\left\|\lf\{\ca_{2^{m+1}}
(\varphi(tD)(f))\r\}^s\right\|_{X^{1/s}}\\
&\lesssim\lf\|\ca_1(\varphi(tD)(f))\r\|_X^s+\sum_{m=0}^\infty
2^{\frac{-\lambda nms}{2}}\left\|\ca_{2^{m+1}}(\varphi(tD)(f))\right\|_X^s\\
&\lesssim\lf\|\ca_1(\varphi(tD)(f))\r\|_X^s+\sum_{m=0}^\infty
2^{\frac{-\lambda nms}{2}}\max\lf\{2^{(m+1)(\frac12-\frac1q)ns},1\r\}
2^{\frac{ns}{\theta}(m+1)}\lf\|\ca_1(\varphi(tD)(f))\r\|_X^s\\
&\lesssim\|S(f)\|_X^s\lesssim\|f\|_{H_X(\rn)}^s.
\end{align*}
Thus, $\|g_\lambda^\ast(f)\|_X\ls\|f\|_{H_X(\rn)}$,
which completes the proof of (ii) and hence of Theorem \ref{g}.
\end{proof}

In what follows, we denote the classical \emph{Hardy space},
the \emph{weighted Hardy space}, the \emph{Herz--Hardy space},
the \emph{Lorentz--Hardy space}, the \emph{Morrey--Hardy space},
the \emph{variable Hardy space},
the \emph{Orlicz--Hardy space} and the \emph{Orlicz-slice Hardy space}, respectively, by
$$H^p(\rn),\quad H^p_w(\rn),\quad H\ck^\alpha_{p,q}(\rn),\quad H^{p,q}(\rn),\quad
H\cm^p_q(\rn),\quad H^{p(\cdot)}(\rn),\quad H^\Phi(\rn)$$
and $(HE_\Phi^r)_t(\rn)$.
Then, by Remarks \ref{r2.1} and \ref{r2.2} and
Theorem \ref{g}, we obtain the following conclusions on these function spaces.

\begin{corollary}\label{c2.1}
\begin{itemize}
\item[\rm(i)] Let $p\in(0,1]$. Then $f\in H^p(\rn)$ if and only if $f\in\cs'(\rn)$, $f$ vanishes
weakly at infinity and $\|g(f)\|_{L^p(\rn)}<\infty$ or
$\|g_\lambda^\ast(f)\|_{L^p(\rn)}<\infty$
for some $\lambda\in(2/p,\fz)$. Moreover, for any $f\in H^p(\rn)$,
$$\|f\|_{H^p(\rn)}\sim
\|g(f)\|_{L^p(\rn)}\sim\lf\|g_\lambda^\ast(f)\r\|_{L^p(\rn)},$$
where the positive equivalence constants are independent of $f$.
  \item[\rm(ii)] Let $p\in(0,1]$ and $w\in A_\fz(\rn)$.
Then $f\in H^p_w(\rn)$ if and only if $f\in\cs'(\rn)$, $f$ vanishes
weakly at infinity and $\|g(f)\|_{L^p_w(\rn)}<\infty$ or
$\|g_\lambda^\ast(f)\|_{L^p_w(\rn)}<\infty$
for some $\lambda\in(2/\theta+1,\fz)$, where $\theta$ is as in
Remarks \ref{r2.1}(b) and \ref{r2.2}(b).
Moreover, for any $f\in H^p_w(\rn)$,
$$\|f\|_{H^p_w(\rn)}\sim
\|g(f)\|_{L^p_w(\rn)}\sim\lf\|g_\lambda^\ast(f)\r\|_{L^p_w(\rn)},$$
where the positive equivalence constants are independent of $f$.

\item[\rm(iii)] Let $p\in(0,1]$, $q\in(0,\fz)$ and $\alpha\in(-n/p,\infty)$.
Then $f\in H\ck^\alpha_{p,q}(\rn)$ if and only if $f\in\cs'(\rn)$, $f$ vanishes weakly at infinity
and $\|g(f)\|_{K^\alpha_{p,q}(\rn)}<\infty$ or
$\|g_\lambda^\ast(f)\|_{K^\alpha_{p,q}(\rn)}<\infty$
for some $\lambda \in (2/\min\{p,[\alpha/n+1/p]^{-1}\},\infty)$.
Moreover, for any $f\in H\ck^\alpha_{p,q}(\rn)$,
$$\|f\|_{H\ck^\alpha_{p,q}(\rn)}\sim\|g(f)\|_{K^\alpha_{p,q}(\rn)}
\sim\lf\|g_\lambda^\ast(f)\r\|_{K^\alpha_{p,q}(\rn)},$$
where the positive equivalence constants are independent of $f$.

\item[\rm(iv)] Let $p\in (0,1]$ and $q\in(0,\fz)$.
Then $f\in H^{p,q}(\rn)$ if and only if $f\in\cs'(\rn)$,
$f$ vanishes weakly at infinity and $\|g(f)\|_{L^{p,q}(\rn)}<\infty$ or
$\|g_\lambda^\ast(f)\|_{L^{p,q}(\rn)}<\infty$
for some $\lambda \in (2/\min\{p,q\}+(1-2/\max\{1,q\}),\infty)$.
Moreover, for any $f\in H^{p,q}(\rn)$,
$$\|f\|_{H^{p,q}(\rn)}\sim\|g(f)\|_{L^{p,q}(\rn)}
\sim\lf\|g_\lambda^\ast(f)\r\|_{L^{p,q}(\rn)},$$
where the positive equivalence constants are independent of $f$.

\item[{\rm(v)}] Let $p\in(0,1]$ and $q\in(0,p]$.
Then $f\in H\cm^p_q(\rn)$ if and only if $f\in\cs'(\rn)$, $f$ vanishes weakly at infinity and
$\|g(f)\|_{\cm^p_q(\rn)}<\infty$ or
$\|g_\lambda^\ast(f)\|_{\cm^p_q(\rn)}<\infty$ for some $\lambda\in
(2/q,\infty)$.
Moreover, for any $f\in H\cm^p_q(\rn)$,
$$\|f\|_{H\cm^p_q(\rn)}\sim\|g(f)\|_{\cm^p_q(\rn)}\sim
\|g_\lambda^\ast(f)\|_{\cm^p_q(\rn)},$$
where the positive equivalence constants are independent of $f$.

\item[{\rm(vi)}] Let $p(\cdot)$ be globally log-H\"older continuous with $0<p_-\le p_+<\fz$,
where $p_-$ and $p_+$ are as in \eqref{eq-p}.
Then $f\in H^{p(\cdot)}(\rn)$ if and only if $f\in\cs'(\rn)$, $f$ vanishes weakly at infinity and
$\|g(f)\|_{L^{p(\cdot)}(\rn)}<\infty$ or
$\|g_\lambda^\ast(f)\|_{L^{p(\cdot)}(\rn)}<\infty$
for some $\lambda\in(2/\min\{1,p_-\}+(1-2/\max\{1,p_+\}),\infty)$.
Moreover, for any $f\in H^{p(\cdot)}(\rn)$,
$$\|f\|_{H^{p(\cdot)}(\rn)}\sim\|g(f)\|_{L^{p(\cdot)}(\rn)}
\sim\lf\|g_\lambda^\ast(f)\r\|_{L^{p(\cdot)}(\rn)},$$
where the positive equivalence constants are independent of $f$.

\item[{\rm(vii)}] Let $\Phi$ be an Orlicz function with lower type $p_\Phi^-\in(0,1)$
and upper type $p_\Phi^+=1$. Then $f\in H^\Phi(\rn)$ if and only if $f\in\cs'(\rn)$, $f$ vanishes weakly
at infinity and $\|g(f)\|_{L^\Phi(\rn)}<\infty$ or
$\|g_\lambda^\ast(f)\|_{L^\Phi(\rn)}<\infty$ for some $\lambda \in (2/p_\Phi^-,\infty)$.
Moreover, for any $f\in H^\Phi(\rn)$,
$$\|f\|_{H^\Phi(\rn)}\sim\|g(f)\|_{L^\Phi(\rn)}\sim\lf\|g_\lambda^\ast(f)\r\|_{L^\Phi(\rn)},$$
where the positive equivalence constants are independent of $f$.

\item[{\rm(viii)}] Let $t,\ r\in(0,\fz)$ and $\Phi$ be an Orlicz function with both
lower type $p_\Phi^-$ and upper type $p_\Phi^+$ belonging to $(0,\fz)$.
Then $f\in(HE_\Phi^r)_t(\rn)$ if and only if $f\in\cs'(\rn)$, $f$ vanishes weakly
at infinity and
$$\|g(f)\|_{(E_\Phi^r)_t(\rn)}<\infty\ \ \ \text{or}\ \ \
\lf\|g_\lambda^\ast(f)\r\|_{(E_\Phi^r)_t(\rn)}<\infty$$
for some $\lambda \in (2/\min\{1,p_\Phi^-,r\}+(1-2/\max\{1,p_\Phi^+,r\}),\infty)$.
Furthermore, for any $f\in (HE_\Phi^r)_t(\rn)$,
$$\|f\|_{(HE_\Phi^r)_t(\rn)}\sim\|g(f)\|_{(E_\Phi^r)_t(\rn)}
\sim\lf\|g_\lambda^\ast(f)\r\|_{(E_\Phi^r)_t(\rn)},$$
where the positive equivalence constants are independent of $f$ and $t$.
\end{itemize}
\end{corollary}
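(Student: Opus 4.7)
The plan is to derive each of the eight items of Corollary \ref{c2.1} as a direct specialization of Theorem \ref{g}. For each of the concrete spaces $X$ listed in the corollary, the task reduces to two steps: first, exhibit parameters $\theta,s\in(0,1]$ with $\theta<s$ and $q\in(1,\fz]$ for which $X$ satisfies \eqref{ma}, \eqref{ma2} and Assumption \ref{a2}; second, verify that these parameters can be chosen so that the open interval $(\max\{2/\theta,\,2/\theta+(1-2/q)\},\fz)$ produced by Theorem \ref{g}(ii) coincides with the $\lambda$-range stated in the item. The $g$-function equivalence falls out of Theorem \ref{g}(i) simultaneously, and the norm equivalences $\|f\|_{H_X(\rn)}\sim\|g(f)\|_X\sim\|g_\lambda^\ast(f)\|_X$ are inherited verbatim.

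The first step is essentially bookkeeping: the eight spaces $L^p(\rn)$, $L^p_w(\rn)$, $\ck^\alpha_{p,q}(\rn)$, $L^{p,q}(\rn)$, $\cm^p_q(\rn)$, $L^{p(\cdot)}(\rn)$, $L^\Phi(\rn)$ and $(E_\Phi^r)_t(\rn)$ are precisely the ones already analyzed in Remarks \ref{r2.1}(a)--(h) and \ref{r2.2}(a)--(h), and the admissible ranges of $\theta$, $s$, $q$ are listed there explicitly. In particular $X^{1/s}$ is a ball Banach function space once $s$ is taken below the natural lower exponent of $X$ (e.g.\ $s<p$ for $L^p(\rn)$, $s<p_-$ for $L^{p(\cdot)}(\rn)$, $s<p_\Phi^-$ for $L^\Phi(\rn)$), so Assumption \ref{a2} is available in every case.

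The second step is a small optimization, governed by a clear dichotomy. The term $1-2/q$ vanishes when $q=2$, so whenever the admissible $q$-range in Remark \ref{r2.2} contains $2$---namely cases (i), (iii), (v), (vii)---one selects $q=2$ and pushes $\theta$ toward the upper boundary of its allowed interval, producing the threshold $\lambda>2/(\text{lower exponent of }X)$ displayed in those items. In the remaining cases (ii), (iv), (vi), (viii) the admissible $q$-range forces $q$ strictly above the upper exponent of $X$; the best choice is then $q$ just above $\max\{1,\text{upper exponent of }X\}$, which contributes the residual term $1-2/\max\{1,\ldots\}$ visible in those thresholds. The extra $+1$ in item (ii) is the limiting value as $q\to\fz$, forced by the dual weight condition $w\in A_{p/\theta}$ underlying Remarks \ref{r2.1}(b) and \ref{r2.2}(b).

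The only delicate point I anticipate is that the parameter ranges in Remarks \ref{r2.1} and \ref{r2.2} are open, so each displayed threshold for $\lambda$ is only approached in the limit as $\theta$ increases to the lower exponent of $X$; since the corollary itself asserts $\lambda$ in an open interval, this causes no real difficulty. I do not foresee any genuine obstacle: Corollary \ref{c2.1} is best viewed as a routine unpacking of Theorem \ref{g} against the library of concrete examples already assembled in the two remarks.
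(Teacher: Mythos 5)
Your proposal is correct and follows exactly the route the paper takes: the paper gives no separate proof of Corollary \ref{c2.1}, deriving it precisely by specializing Theorem \ref{g} to the spaces catalogued in Remarks \ref{r2.1} and \ref{r2.2} and reading off the admissible $\theta$, $s$ and $q$ there (the $g$-function equivalence coming from Theorem \ref{g}(i), where \eqref{ma2} is needed, and the $g_\lambda^\ast$ equivalence from Theorem \ref{g}(ii), where it is not). Your threshold bookkeeping---taking $q=2$ whenever the admissible range from Remark \ref{r2.2} permits it, and otherwise $q$ just above the upper exponent (or $q\to\infty$ in the weighted case, which produces the $+1$ in item (ii))---is the same implicit optimization the paper intends.
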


\begin{remark}\label{r2.3}
\begin{enumerate}
  \item[\rm(i)] Let $p\in(0,1]$. It is well known that, when
$\lambda\in(2/p,\infty)$, the classical Hardy space $H^p(\rn)$
can be characterized by the Littlewood--Paley $g_\lambda^\ast$-function
(see, for instance, \cite[Chapter 7]{fs82}). From Corollary \ref{c2.1}(i),
it follows that, when $X:=L^p(\rn)$ with $p\in(0,1]$,
the range of the index $\lambda$ in Corollary \ref{c2.1}(i) coincides
with the best-known classical case $\lambda\in(2/p,\infty)$.

\item[\rm(ii)] The $g_\lambda^\ast$-function characterization of
$H^p_w(\rn)$ in Corollary \ref{c2.1}(ii) was obtain in \cite[Theorem 2]{as77}.
Moreover, the $g$-function characterization of $H^p_w(\rn)$ is also known
(see, for instance, \cite[Theorem 4.8]{lhy12}).

\item[\rm(iii)] For the Herz--Hardy space $H\ck^\alpha_{p,q}(\rn)$,
Corollary \ref{c2.1}(iii)  is known (see, for instance, \cite{lyh}
and \cite[Theorem 3.1]{lw00}).

\item[\rm(iv)] For the Lorentz--Hardy space $H^{p,q}(\rn)$,
Corollary \ref{c2.1}(iv) was obtained in \cite[Theorems 2.8 and 2.9]{lyy18}.

\item[\rm(v)] To our best knowledge, Corollary \ref{c2.1}(v) is new.
We also point out that another version of the Littlewood--Paley characterization
of the Morrey--Hardy space $H\cm^p_q(\rn)$ was established in \cite[Theorem 4.2]{s09}.

\item[\rm(vi)] For the variable Hardy space $H^{p(\cdot)}(\rn)$,
Corollary \ref{c2.1}(vi) was obtained in \cite[Corollary 1.5]{zyl}
(see also \cite{jzzw,lwyy17,lwyy18,ns12}). We also point out that,
for the space $H^{p(\cdot)}(\rn)$, the range of the parameter
$\lambda$, obtained in \cite{jzzw,lwyy17,lwyy18},
is $(1+\frac{2}{\min\{2,p_-\}},\fz)$. Thus, our results improve those
results when $p_-\in(0,1]$
via widening the range $(1+\frac{2}{p_-},\fz)$
of the parameter $\lambda$ into $(1+\frac{2}{p_-}-\frac{2}{\max\{1,p_+\}},\fz)$.

\item[\rm(vii)]  For the Orlicz--Hardy space $H^\Phi(\rn)$, Corollary
\ref{c2.1}(vii) is known (see, for instance, \cite{lhy12} and \cite{ylk}).

\item[\rm(viii)] For the Orlicz-slice Hardy space $(HE_\Phi^r)_t(\rn)$,
the conclusion of Corollary \ref{c2.1}(viii)
was obtained in \cite[Theorems 3.18 and 3.19]{zyyw}.
\end{enumerate}
\end{remark}

\section{Boundedness of Calder\'on--Zygmund operators}\label{s3}

In this section, we obtain the boundedness of  Calder\'on--Zygmund
operators on $H_X(\rn)$. We begin with the notion of convolutional
Calder\'on--Zygmund operators (see, for instance, \cite[Section 5.3.2]{gl}).

\begin{definition}
For any given $\delta \in (0,1)$, a \emph{convolutional $\delta$-type
Calder\'on--Zygmund operator $T$}
is a linear bounded operator on $L^2(\rn)$ with the kernel $K\in\mathcal{S}'(\rn)$
coinciding with a locally integrable function on $\rn\setminus\{\vec{0}_n\}$ and satisfying:
\begin{enumerate}
\item[\rm{(i)}] There exists a positive constant $C$ such that,
for any $x,\ y\in \rn$ with $|x|>2|y|$,
$$|K(x-y)-K(x)|\le C \frac{|y|^\delta}{|x|^{n+\delta}};$$
\item[\rm{(ii)}] For any $f\in L^2(\rn)$ and $x\in\rn$, $T(f)(x)={\rm p.\,v.}\,K\ast f(x)$.
\end{enumerate}
\end{definition}

Moreover, we recall the notion of absolutely continuous quasi-norms as follows
(see, for instance, \cite[Definition 3.1]{br}).

\begin{definition}
Let $X$ be a ball quasi-Banach function space. A function $f\in X$ is said to have an
\emph{absolutely continuous quasi-norm} in $X$ if $\|f\mathbf{1}_{E_j}\|_X\downarrow 0$
whenever $\{E_j\}_{j=1}^\infty$ is a sequence of measurable sets that
satisfy $E_j \supset E_{j+1}$ for any $j \in \nn$ and $\cap_{j=1}^\infty E_j = \emptyset$.
Moreover, $X$ is said to have an \emph{absolutely continuous quasi-norm} if, for any $f\in X$, $f$  have an
absolutely continuous quasi-norm in $X$.
\end{definition}

\begin{remark}
We point out that the definition of the absolutely continuous (quasi-)norm
presented in \cite[Definition 2.5]{shyy} is not correct. More precisely,
in \cite[Definition 2.5]{shyy}, the condition that $\|\mathbf{1}_{E_j}\|_X\downarrow 0$
whenever $\{E_j\}_{j=1}^\infty$ is a sequence of measurable sets that
satisfy $E_j \supset E_{j+1}$ for any $j \in \nn$ and $\cap_{j=1}^\infty E_j = \emptyset$
should be replaced by that, for any $f\in X$, $\|f\mathbf{1}_{E_j}\|_X\downarrow 0$
whenever $\{E_j\}_{j=1}^\infty$ is a sequence of measurable sets that
satisfy $E_j \supset E_{j+1}$ for any $j \in \nn$ and $\cap_{j=1}^\infty E_j = \emptyset$.
\end{remark}
\begin{remark}
We point out that, except the Morrey space $\mathcal{M}^p_q(\rn)$,
the other examples of function spaces
in Remark \ref{r2.1} all have absolutely continuous quasi-norms (see
also \cite[p.\,10]{shyy} for some details).
\begin{enumerate}
\item[{\rm(i)}] By the Lebesgue dominated convergence theorem, it is easy to see that $L^p(\rn)$,
$L^p_w(\rn)$, $\ck^\alpha_{p,q}(\rn)$ and $(E_\Phi^r)_t(\rn)$ have an absolutely continuous quasi-norm.

\item[{\rm(ii)}]  We claim that  $L^{p(\cdot)}(\rn)$ has an absolutely continuous quasi-norm.
Indeed, let $f\in L^{p(\cdot)}(\rn)$ and $\{E_j\}_{j=1}^\infty$ be a sequence of measurable sets that
satisfy $E_j \supset E_{j+1}$ for any $j \in \nn$ and $\cap_{j=1}^\infty E_j = \emptyset$.
By the definition of  $L^{p(\cdot)}(\rn)$ and the Lebesgue dominated convergence theorem,
we know that
$$\int_{{\mathbb R}^n}|f(x)\mathbf{1}_{E_j}(x)|^{p(x)}dx\to 0 \quad\mathrm{as} \quad j\to\infty.$$
From this, we deduce that, for any $\varepsilon\in(0,1)$, there exists $j_0\in\nn$ such that,
for any $j\in\nn$ and $j>j_0$, $\int_{{\mathbb R}^n}|f(x)\mathbf{1}_{E_j}(x)|^{p(x)}dx<\varepsilon^{1/p_-}$, which implies that
$$\int_{{\mathbb R}^n}\left[\frac{|f(x)\mathbf{1}_{E_j}(x)|}{\varepsilon}\right]^{p(x)}dx<1.$$
By this, we find that $\|f\mathbf{1}_{E_j}\|_{L^{p(\cdot)}(\rn)}\leq \varepsilon$ and
hence $\lim_{j\to\infty}\|f\mathbf{1}_{E_j}\|_{L^{p(\cdot)}(\rn)}=0$.
This proves that $L^{p(\cdot)}(\rn)$ has an absolutely continuous quasi-norm.
Furthermore, using a similar argument as above, we can show that
the Musielak--Orlicz space $L^{\varphi}(\rn)$ has also an absolutely continuous quasi-norm
and we omit the details.

\item[{\rm(iii)}] We now claim that $L^{p,q}(\rn)$ with $p,\ q \in(0,\infty)$ has an absolutely continuous quasi-norm.
Indeed, by \cite[Proposition 1.4.9]{gl}, we know that, for any $f\in L^{p,q}(\rn)$ with $p,\ q\in (0,\infty)$,
\begin{equation}\label{pql}\|f\|_{L^{p,q}(\rn)}=p^{\frac{1}{q}}
\left(\int_0^\infty\lf\{[\mu_f(s)]^{\frac{1}{p}}s\r\}^q\,\frac{ds}{s}\right)^{\frac{1}{q}},
\end{equation}
where, for any $s\in(0,\fz)$,
$$\mu_f(s):=|\{x\in\rn:\ |f(x)|>s\}|.$$
Let $f\in L^{p,q}(\rn)$ with $p,\ q\in (0,\infty)$ and $\{E_j\}_{j=1}^\infty$ be a sequence of measurable sets that
satisfy $E_j \supset E_{j+1}$ for any $j \in \nn$ and $\cap_{j=1}^\infty E_j = \emptyset$. By \eqref{pql} and the fact that $\mu_f$ is non-increasing, we find that, for any $s\in(0,\infty)$, $\mu_f(s)<\fz$.
From this, it follows that, for any given $s\in(0,\fz)$,
$$\mu_{f\mathbf{1}_{E_j}}(s)=|\{x\in\rn:\ |f(x)\mathbf{1}_{E_j}(x)|>s\}|=|\{x\in\rn:\ |f(x)|>s\}
\cap E_j|\to 0$$
as $j\to \infty$. By this, the fact that, for any $j\in\nn$, $\mu_{f\mathbf{1}_{E_j}}\leq\mu_f$, and the Lebesgue dominated convergence theorem, we conclude that
\begin{align*}
\lim_{j\to\infty}\|f\mathbf{1}_{E_j}\|_{L^{p,q}(\rn)}&=p^{\frac{1}{q}}\lim_{j\to\infty}
\left(\int_0^\infty\lf\{[\mu_{f\mathbf{1}_{E_j}}(s)]^{\frac{1}{p}}s\r\}^q\,\frac{ds}{s}\right)^{\frac{1}{q}}\\
&=p^{\frac{1}{q}}
\left(\int_0^\infty\lim_{j\to\infty}\lf\{[\mu_{f\mathbf{1}_{E_j}}(s)]^{\frac{1}{p}}s\r\}^q\,
\frac{ds}{s}\right)^{\frac{1}{q}}
=0,
\end{align*}
which prove that $L^{p,q}(\rn)$ with $p,\ q \in(0,\infty)$ has an absolutely continuous quasi-norm.
\end{enumerate}
\end{remark}

Now we have the following conclusion on convolutional $\delta$-type
Calder\'on--Zygmund operators.

\begin{theorem}\label{ccz}
Assume that $X$ is a ball quasi-Banach function space satisfying both
\eqref{ma} and Assumption \ref{a2} with the same $s\in (0,1]$,
and having an absolutely continuous quasi-norm. Let $\delta\in(0,1)$ and $T$ be a
convolutional $\delta$-type Calder\'on--Zygmund operator.
If $\theta\in [\frac{n}{n+\delta},1)$,
then $T$ has a unique extension on $H_X(\rn)$.
Moreover, there exists a positive constant $C$ such that, for any $f\in H_X(\rn)$,
$$\lf\|T(f)\r\|_{H_X(\rn)}\le C\|f\|_{H_X(\rn)}.$$
\end{theorem}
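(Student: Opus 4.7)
The plan is to combine the atomic characterization of $H_X(\rn)$ from Lemma \ref{atom} with the classical observation that a convolutional $\delta$-type Calder\'on--Zygmund operator $T$ sends an atom into a ``molecule'' whose building blocks live on dyadic annuli around the original ball. I would first prove the claimed estimate on a dense subspace of $H_X(\rn)$ consisting of finite linear combinations of $X$-atoms (with vanishing moments of order $d$ chosen large enough in the sense of Lemma \ref{atom}), and then extend to all of $H_X(\rn)$ by density. The absolute continuity of $\|\cdot\|_X$ is what makes this extension legitimate, since it ensures that partial sums of atomic expansions converge in the quasi-norm of $H_X(\rn)$.

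For a single $X$-atom $a$ supported in $B:=B(x_B,r)$ with $\|a\|_{L^q(\rn)}\le|B|^{1/q}\|\mathbf{1}_B\|_X^{-1}$ and the prescribed vanishing moments, the $L^q$-boundedness of $T$, which follows from the classical Calder\'on--Zygmund theory, gives
\begin{equation*}
\lf\|T(a)\mathbf{1}_{2B}\r\|_{L^q(\rn)}\ls\|a\|_{L^q(\rn)}\ls|B|^{1/q}\|\mathbf{1}_B\|_X^{-1},
\end{equation*}
while the $\delta$-type H\"older regularity of the kernel combined with the cancellation of $a$ yields, for $x\in 2^{k+1}B\setminus 2^kB$ with $k\in\nn$,
\begin{equation*}
|T(a)(x)|\ls\frac{r^\delta}{|x-x_B|^{n+\delta}}\|a\|_{L^1(\rn)}\ls\frac{r^\delta|B|^{1/q'}}{(2^kr)^{n+\delta}}\|a\|_{L^q(\rn)}.
\end{equation*}
Integrating the latter over the annulus $A_k:=2^{k+1}B\setminus 2^kB$ and rescaling against the $L^q$-size allowed for an atom on $2^{k+1}B$ produces a decomposition $T(a)=\sum_{k=0}^\infty\mu_k\wz a_k$, where each $\wz a_k$ is, after a polynomial correction of degree at most $d$ to restore the vanishing moments, an $X$-atom supported in $2^{k+1}B$, and the coefficients obey $|\mu_k|\ls 2^{-k(n+\delta)}\|\mathbf{1}_{2^{k+1}B}\|_X/\|\mathbf{1}_B\|_X$.

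Given any finite atomic expansion $f=\sum_j\lambda_ja_j$ with $\Lambda(\{\lambda_ja_j\}_j)\ls\|f\|_{H_X(\rn)}$, substituting the above decomposition into $T(f)=\sum_j\lambda_jT(a_j)$ and invoking Lemma \ref{atom} in reverse together with Lemma \ref{r} controls $\|T(f)\|_{H_X(\rn)}^s$ by a constant multiple of
\begin{equation*}
\left\|\left[\sum_j\sum_{k=0}^\infty\lambda_j^s2^{-ks(n+\delta)}\lf(\frac{\|\mathbf{1}_{2^{k+1}B_j}\|_X}{\|\mathbf{1}_{B_j}\|_X}\r)^s\frac{\mathbf{1}_{2^{k+1}B_j}}{\|\mathbf{1}_{2^{k+1}B_j}\|_X^s}\right]^{1/s}\right\|_X^s.
\end{equation*}
Applying the pointwise comparison $\mathbf{1}_{2^{k+1}B_j}\ls 2^{kn/\theta}M^{(\theta)}(\mathbf{1}_{B_j})$ followed by the Fefferman--Stein-type inequality \eqref{ma}, the $k$-sum becomes geometric and convergent precisely when $n+\delta>n/\theta$, that is, when $\theta>n/(n+\delta)$, which is exactly the hypothesis. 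The resulting bound is a multiple of $\Lambda(\{\lambda_ja_j\}_j)\ls\|f\|_{H_X(\rn)}$, as required.

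The main obstacle will be enforcing the correct vanishing-moment cancellation on each annular piece $\wz a_k$: although $\int T(a)=0$ is immediate from the convolution structure once $a$ has mean zero, promoting each $\wz a_k$ to a genuine $X$-atom requires subtracting polynomial projections of appropriate degree on each annulus and checking that these corrections respect the same $L^q$-bounds. A secondary technical point is the convergence of $\sum_k\mu_k\wz a_k$ to $T(a)$ in $\cs'(\rn)$, but this follows by a routine summation argument once the annular $L^q$-estimates are in place. The final density extension from finite atomic sums to arbitrary $f\in H_X(\rn)$ is then standard, given the absolute continuity of $\|\cdot\|_X$.
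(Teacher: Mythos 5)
Your overall scheme (atomic decomposition of $f$, then recombining $T(a_j)$ into atoms on dilated cubes and reassembling via the reconstruction theorem and \eqref{ma}) founders on the vanishing-moment step that you flag as ``the main obstacle'' and then treat as routine. In this framework the reconstruction direction of the atomic (or molecular) characterization requires atoms with vanishing moments up to order $d\ge d_X=\lceil n(1/\theta-1)\rceil$, and since $\theta\in[\frac{n}{n+\delta},1)$ is strictly less than $1$, one always has $d_X\ge 1$; so each $\wz a_k$ must in particular have vanishing \emph{first} moments. But $T(a)$ only retains the zeroth moment: because the kernel has only $\delta$-H\"older regularity with $\delta\in(0,1)$, $|T(a)(x)|$ decays like $|x-x_B|^{-n-\delta}$, so $\int_{\rn}|x|\,|T(a)(x)|\,dx$ already diverges and the first moments of $T(a)$ are not even defined, let alone zero. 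Quantitatively, on the annulus $A_k$ one has $\int_{A_k}|x-x_B|\,|T(a)(x)|\,dx\ls 2^{k(1-\delta)}r\,\|a\|_{L^1(\rn)}$, which grows geometrically in $k$; hence the polynomial corrections you would subtract on each annulus violate the allowed atomic size by a factor $2^{k(1-\delta)}$, and the usual telescoping redistribution of these corrections (which relies on the summability of the tail moments and on the total higher moments of $T(a)$ vanishing) diverges. This is precisely why the classical treatment of convolutional $\delta$-type operators uses molecules with \emph{only} a zeroth moment, a tool you do not have here in the required generality. A secondary but real defect: your geometric $k$-sum needs the strict inequality $\theta>\frac{n}{n+\delta}$, whereas the theorem includes the endpoint $\theta=\frac{n}{n+\delta}$, at which your series becomes a divergent constant series.

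The paper's proof avoids both problems by never trying to view $T(a_j)$ as an atom or molecule. After decomposing $f\in L^2(\rn)\cap H_X(\rn)$ into $(X,2,d)$-atoms (Lemma \ref{atom}) and using $L^2$-boundedness of $T$, it estimates the radial maximal function $\phi^\ast_+(T(f))$ directly and invokes the maximal characterization (Lemma \ref{mch}): near each $Q_j$ it uses only the $L^2$ size of $T(a_j)$ together with Lemma \ref{r}, and on $(4Q_j)^\complement$ it uses the kernel regularity and the cancellation of $a_j$ to get the pointwise bound $\phi^\ast_+(T(a_j))\ls\|\mathbf{1}_{Q_j}\|_X^{-1}[M(\mathbf{1}_{Q_j})]^{\frac{n+\delta}{n}}\le\|\mathbf{1}_{Q_j}\|_X^{-1}M^{(\theta)}(\mathbf{1}_{Q_j})$, valid exactly when $\theta\ge\frac{n}{n+\delta}$ (endpoint included), after which \eqref{ma} finishes the estimate. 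If you want to salvage your route you would have to either prove a reconstruction theorem for $H_X(\rn)$ from mean-zero molecules with decay exponent $n+\delta$ (not available in the paper), or switch to the paper's direct maximal-function estimate.
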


To prove Theorem \ref{ccz}, we need the following properties of $H_X(\rn)$
and $X$, where Lemma \ref{atom} is just \cite[Theorem 3.7]{shyy} and
Lemma \ref{mch} is a part of \cite[Theorem 3.1]{shyy}.

\begin{lemma}\label{atom}
Let $X$ be a ball quasi-Banach function space satisfying
\eqref{ma} with $0<\theta<s\le 1$.
Assume further that $X$ has an absolutely continuous quasi-norm,
\begin{equation}\label{dx}
d_X:=\lceil n(1/\theta-1)\rceil
\end{equation}
and $d \in [d_X,\infty)\cap\zz_+$ is a fixed integer.  Then, for any $f\in H_X(\rn)$,
there exist a sequence $\{a_j\}_{j=1}^\infty$ of $(X,\infty,d)$-atoms supported, respectively,  in a
sequence $\{Q_j\}^\infty_{j=1}$ of cubes, a sequence
$\{\lambda_j\}^\infty_{j=1}$ of non-negative numbers and a positive constant $C_{(s)}$,
independent of $f$ but depending on $s$, such that
$$f=\sum_{j=1}^\infty \lambda_ja_j\quad\quad\text{in}\quad\mathcal{S}'(\rn)$$
and
$$\left\|\left\{\sum_{j=1}^\infty\left(\frac{\lambda_j}{\|\mathbf{1}_{Q_j}\|_X}
\right)^s\mathbf{1}_{Q_j}\right\}^{1/s}\right\|_X\le C_{(s)}\|f\|_{H_X(\rn)}.$$
\end{lemma}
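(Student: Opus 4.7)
The plan is to realize $H_X(\rn)$ inside a tent space, decompose the tent-space data atomically, and then reconstruct atoms of $H_X(\rn)$ via a Calder\'on reproducing formula. More concretely, fix $\varphi\in\cs(\rn)$ as in Lemma \ref{la}, so that $\|f\|_{H_X(\rn)}\sim\|S(f)\|_X=\|\varphi(tD)(f)\|_{T^1_X(\mathbb{R}^{n+1}_+)}$. Given $f\in H_X(\rn)$, Proposition \ref{b} (and Lemma \ref{la}) ensures $f$ vanishes weakly at infinity and $F:=\varphi(tD)(f)$ lies in $T^1_X(\mathbb{R}^{n+1}_+)$.

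Next I would apply the tent-space atomic decomposition (Lemma \ref{tad}) to write $F(y,t)=\sum_{j}\lambda_j A_j(y,t)$ in $T^1_X$, where each $A_j$ is a $(T_X^1,\infty)$-atom supported in some tent $T(B_j)$, together with the quantitative estimate
\[
\Lambda(\{\lambda_j A_j\}_j)\sim\|F\|_{T^1_X(\mathbb{R}^{n+1}_+)}\sim\|f\|_{H_X(\rn)}.
\]
Now choose $\psi\in\cs(\rn)$, radial and supported in an annulus, whose Fourier transform vanishes to order greater than $d$ at the origin, so that the Calder\'on reproducing identity $g=\int_0^\infty\psi(tD)\varphi(tD)(g)\,\tfrac{dt}{t}$ holds in $\cs'(\rn)$ for every $g\in\cs'(\rn)$ vanishing weakly at infinity. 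Inserting the atomic decomposition of $F$ into this identity and interchanging sum and integral gives, at least formally, $f=\sum_j\lambda_j\wz{a}_j$ in $\cs'(\rn)$, where
\[
\wz{a}_j(x):=\int_0^\infty\int_{\rn}\psi_t(x-y)A_j(y,t)\,dy\,\frac{dt}{t}.
\]

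The core technical step is then verifying that, up to a harmless multiplicative constant depending only on $n$, $\psi$ and $d$, each $\wz{a}_j$ is an $(X,\infty,d)$-atom associated with a fixed dilate $cB_j$. The support condition follows from $\supp A_j\subset T(B_j)$ together with the support of $\psi_t$; the vanishing moments up to order $d$ are inherited from the vanishing moments of $\psi$; the $L^\infty$ size bound, which is the main obstacle, requires combining the support of $A_j$ in $T(B_j)$, the $L^2$ estimate $\|A_j\|_{T_2^{2,1}(\mathbb{R}^{n+1}_+)}\le|B_j|^{1/2}\|\mathbf{1}_{B_j}\|_X^{-1}$ (applied via Cauchy--Schwarz in $(y,t)\in T(B_j)$), and control of $\int_0^{r(B_j)}\int_{|y-x|<t}|\psi_t(x-y)|^2\,\tfrac{dy\,dt}{t}$ by a constant, giving $\|\wz{a}_j\|_{L^\infty(\rn)}\lesssim\|\mathbf{1}_{B_j}\|_X^{-1}$, which is precisely the atomic size requirement.

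Finally, the coefficient bound in the statement is immediate from $\Lambda(\{\lambda_jA_j\}_j)\sim\|f\|_{H_X(\rn)}$ together with the way $B_j$ transforms into $cB_j$: the extra dilation is absorbed using $\mathbf{1}_{cB_j}\lesssim c^{n/\theta}M^{(\theta)}(\mathbf{1}_{B_j})$ and the vector-valued inequality \eqref{ma}, exactly in the spirit of the estimate appearing at the end of the proof of Lemma \ref{af}. Convergence of $\sum_j\lambda_j\wz{a}_j$ in $\cs'(\rn)$ is obtained from the reproducing formula by truncating the sum, bounding the tail in the Lusin-area (quasi-)norm using the absolute continuity of $\|\cdot\|_X$, and then passing to a subsequence; this also uses the hypothesis that $f$ vanishes weakly at infinity, so that no nontrivial polynomial obstruction arises in the reconstruction.
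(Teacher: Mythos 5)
Note first that the paper does not prove Lemma \ref{atom} at all: it is quoted verbatim from \cite[Theorem 3.7]{shyy}, whose proof runs through a Calder\'on--Zygmund decomposition of $f$ at the level sets of the grand maximal function, and it is exactly that machinery which produces $L^\infty$-normalized atoms. Your route (Lusin-area/tent-space decomposition via Lemmas \ref{la} and \ref{tad}, followed by a Calder\'on reproducing formula) is the Coifman--Meyer--Stein scheme of \cite{cms}, and it breaks down precisely at the point you call the main obstacle, the $L^\infty$ size bound.

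Concretely, the claim that $\int_0^{r(B_j)}\int_{|y-x|<t}|\psi_t(x-y)|^2\,\frac{dy\,dt}{t}$ is ``controlled by a constant'' is false: since $\|\psi_t\|_{L^2(\rn)}^2=t^{-n}\|\psi\|_{L^2(\rn)}^2$, this integral is comparable to $\|\psi\|_{L^2(\rn)}^2\int_0^{r(B_j)}t^{-n-1}\,dt=\infty$, and even the correctly scaled variant $\int_0^{r(B_j)}\int_{|x-y|<t}|\psi_t(x-y)|^2\,t^{n}\,\frac{dy\,dt}{t}\sim\int_0^{r(B_j)}\frac{dt}{t}$ diverges. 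This is not a reparable bookkeeping slip: the reconstruction $A\mapsto\int_0^\infty\psi_t\ast A(\cdot,t)\,\frac{dt}{t}$ is bounded from $T_2^{q,1}(\mathbb{R}^{n+1}_+)$ to $L^q(\rn)$ only for $q<\infty$; at the endpoint it maps into ${\rm BMO}$, not $L^\infty$. A $(T_X^1,\infty)$-atom $A_j$ only gives $\|\ca_1(A_j)\|_{L^p(\rn)}\le|B_j|^{1/p}\|\mathbf{1}_{B_j}\|_X^{-1}$ for $p<\infty$ (equivalently a sup bound on $\ca_1(A_j)$), and one can take $A_j$ concentrated in $\{(y,t):\,|y-x_j-tv_0|<\varepsilon t\}$ with radial profile $c(t)=1/\log(er_j/t)$: then $\ca_1(A_j)$ stays bounded while $\int_0^{r_j}\int_{\rn}\psi_t(x_j-y)A_j(y,t)\,dy\,\frac{dt}{t}$ diverges, so $\wz{a}_j\notin L^\infty(\rn)$ in general. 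Thus your construction yields at best $(X,q,d)$-atoms with $q<\infty$ --- or, if $\psi$ is taken with Fourier support in an annulus as literally written, non-compactly supported pieces, i.e.\ molecules, since the kernel of $\psi(tD)$ is $(\mathcal{F}^{-1}\psi)_t$ --- whereas the lemma asserts $(X,\infty,d)$-atoms; upgrading to $\infty$-atoms needs a genuinely different argument, such as the grand-maximal-function Calder\'on--Zygmund decomposition of \cite{shyy}. (Two smaller points: an exact reproducing identity with a compactly supported spatial kernel against the fixed frequency-localized $\varphi$ requires $\varphi$ radial or a more careful normalization, and the interchange of sum and integral you perform ``formally'' needs the kind of justification carried out in the proof of Lemma \ref{10.29.2}.)
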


\begin{lemma}\label{mch}
Let $X$ be a ball quasi-Banach function space, $r\in(0,\infty)$, $b\in(n/r,\infty)$
and $M$ be bounded on $X^{1/r}$.
Assume that $\Phi\in\mathcal{S}(\rn)$ satisfies $\int_{\rn}\Phi(x)\,dx\neq 0$.
Then, for any $f\in\mathcal{S}'(\rn)$,
$$\lf\|M_b^{\ast\ast}(f,\Phi)\r\|_X\sim \|M(f,\Phi)\|_X,$$
where $M_b^{\ast\ast}(f,\Phi)$ is as in \eqref{mbaa},
$M(f,\Phi):= \sup_{t\in(0,\infty)}|\Phi_t\ast f|$ and the positive equivalence constants are independent of $f$.
\end{lemma}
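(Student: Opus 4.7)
The lower bound $\|M(f,\Phi)\|_X\lesssim\|M_b^{\ast\ast}(f,\Phi)\|_X$ is immediate: taking $y=\vec{0}_n$ in the supremum defining $M_b^{\ast\ast}$ in \eqref{mbaa} gives $|\Phi_t\ast f(x)|\le M_b^{\ast\ast}(f,\Phi)(x)$ for every $t\in(0,\infty)$, so $M(f,\Phi)(x)\le M_b^{\ast\ast}(f,\Phi)(x)$ pointwise on $\rn$ and the monotonicity of the quasi-norm on $X$ (property (ii) of Definition \ref{bfs}) yields the norm inequality.

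For the reverse inequality, the plan is to decouple the estimate into a purely pointwise Peetre-type bound and a function-space step. The pointwise statement I aim to establish is
$$
\bigl[M_b^{\ast\ast}(f,\Phi)(x)\bigr]^r\lesssim M\bigl(|M(f,\Phi)|^r\bigr)(x)\quad \text{for all } x\in\rn,
$$
valid whenever $b>n/r$. Granting this, taking the $X$ quasi-norm and using the definition $\|g\|_{X^{1/r}}=\bigl\||g|^{1/r}\bigr\|_X^{\,r}$ of the $(1/r)$-convexification gives
$$
\bigl\|M_b^{\ast\ast}(f,\Phi)\bigr\|_X\lesssim\bigl\|[M(|M(f,\Phi)|^r)]^{1/r}\bigr\|_X=\bigl\|M(|M(f,\Phi)|^r)\bigr\|_{X^{1/r}}^{1/r}.
$$
The hypothesis that $M$ is bounded on $X^{1/r}$ then produces
$$
\bigl\|M(|M(f,\Phi)|^r)\bigr\|_{X^{1/r}}^{1/r}\lesssim\bigl\||M(f,\Phi)|^r\bigr\|_{X^{1/r}}^{1/r}=\|M(f,\Phi)\|_X,
$$
again by the convexification identity, closing the estimate.

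The main obstacle is the pointwise inequality, which is a classical Peetre-type bound for Schwartz convolvers. To prove it I fix $(y,t)\in\mathbb{R}_+^{n+1}$ and a small $\epsilon\in(0,1)$. For any $z\in B(x-y,\epsilon t)$, the smoothness of $\Phi_t\ast f$ (quantitatively, a gradient bound of the form $|\nabla(\Phi_t\ast f)(\zeta)|\lesssim t^{-1}\sup_{|\zeta-\zeta'|<t}|\Phi'_{t}\ast f(\zeta')|$ for an auxiliary $\Phi'\in\mathcal{S}(\rn)$) gives $|\Phi_t\ast f(x-y)|\le|\Phi_t\ast f(z)|+C\epsilon\,M_b^{\ast\ast}(f,\Phi)(x-y)$ up to a tail term controlled by $M_b^{\ast\ast}$ itself. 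Raising to the $r$th power, averaging over $z$ in the ball $B(x-y,\epsilon t)\subset B(x,|y|+\epsilon t)$, and dominating $|\Phi_t\ast f(z)|\le M(f,\Phi)(z)$, one obtains
$$
|\Phi_t\ast f(x-y)|^r\lesssim\Bigl(\tfrac{|y|+\epsilon t}{\epsilon t}\Bigr)^n M\bigl(|M(f,\Phi)|^r\bigr)(x)+C\epsilon^r\bigl[M_b^{\ast\ast}(f,\Phi)(x-y)\bigr]^r,
$$
the first factor being $\lesssim\epsilon^{-n}(1+|y|/t)^n$. Dividing by $(1+|y|/t)^{br}$ and using $br>n$ allows one to take the supremum in $(y,t)$ on the left, and the $C\epsilon^r$ tail is absorbed into the left-hand side after first truncating to ensure $M_b^{\ast\ast}(f,\Phi)$ is a priori finite (for instance by replacing the full supremum by a supremum over $(y,t)$ with $t\in(\delta,1/\delta)$ and $|y|<1/\delta$, proving the bound uniformly in $\delta$, and then passing to the limit $\delta\downarrow0$ using the Fatou-type property (iii) of a quasi-Banach function space). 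This bootstrap step is the delicate point of the argument; the remainder is routine.
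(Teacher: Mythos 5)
The paper itself does not prove this lemma; it is cited as part of \cite[Theorem 3.1]{shyy}, so no line-by-line comparison is possible. Your lower bound is trivially correct, and your overall scheme for the upper bound --- a pointwise Peetre-type estimate $[M_b^{\ast\ast}(f,\Phi)]^r\lesssim M(|M(f,\Phi)|^r)$ for $br>n$, followed by the $(1/r)$-convexification identity and the assumed boundedness of $M$ on $X^{1/r}$ --- is exactly the standard Fefferman--Stein reduction and is surely what the cited reference does. So the function-space layer of your argument is sound.

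The gap is in the pointwise step. You bound $|\Phi_t\ast f(x-y)-\Phi_t\ast f(z)|$ by $\epsilon\sup_\zeta|(\nabla\Phi)_t\ast f(\zeta)|$ and then treat this as if it were $\lesssim\epsilon\, M_b^{\ast\ast}(f,\Phi)(x-y)$. But $\nabla\Phi$ is a different Schwartz function, and there is no unconditional pointwise inequality $|(\nabla\Phi)_t\ast f|\lesssim M_b^{\ast\ast}(f,\Phi)$. To dominate one convolver by the Peetre maximal function of another, one needs a Calder\'on-type reproducing formula writing $\nabla\Phi$ as a superposition of dilates $\Phi_s$ with $s\le t$ (the analogue, for $\widehat\Phi(\vec 0_n)\neq 0$, of what Lemma \ref{ep} accomplishes for Fourier-annulus-supported $\phi$). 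This has a second consequence that defeats your proposed truncation: the tail term then involves $|\Phi_s\ast f|$ at arbitrarily small scales $s=2^{-k}t$, which escape the region $\{t\in(\delta,1/\delta),\ |y|<1/\delta\}$, so the truncated maximal function cannot appear on both sides of the absorption inequality and the bootstrap does not close. The classical proof (Stein's \textit{Harmonic Analysis}, Ch.~III, \S 1.3, and its adaptation in \cite{shyy}) circumvents both problems simultaneously by bootstrapping through the grand maximal function $m_{b,N}^{\ast\ast}$ over a bounded family $\mathcal{F}_N$ of Schwartz convolvers --- so that $\nabla\Phi$ is automatically inside the family --- and by exploiting the polynomial growth of $\Phi_t\ast f$ for $f\in\cs'(\rn)$ to obtain a priori finiteness at a large initial order $N$, rather than truncating in $(y,t)$. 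As written, your absorption step is not supported by the stated ingredients.
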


Using Lemmas \ref{atom} and \ref{mch}, we now show Theorem \ref{ccz}.

\begin{proof}[Proof of Theorem \ref{ccz}]
Since $L^2(\rn)\cap H_X(\rn)$ is dense in $H_X(\rn)$,
to prove Theorem \ref{ccz}, by a standard density argument,
it suffices to show that $T$ is bounded from
$L^2(\rn)\cap H_X(\rn)$ to $H_X(\rn)$.

Assume that $f\in L^2(\rn) \cap H_X(\rn)$. By Lemma \ref{atom},
we know that there exist a sequence $\{a_j\}_{j=1}^\infty$ of $(X,2,d)$-atoms
supported, respectively,  in cubes $\{Q_j\}^\infty_{j=1}$,
and a sequence $\{\lambda_j\}^\infty_{j=1}$ of non-negative numbers such that
\begin{equation}\label{de}
f=\sum_{j=1}^\infty \lambda_ja_j\quad\quad\text{in}\quad L^2(\rn)
\end{equation}
and
\begin{equation}\label{nc}
\left\|\left\{\sum_{j=1}^\infty\left(\frac{\lambda_j}{\|\mathbf{1}_{Q_j}\|_X}\right)^s
\mathbf{1}_{Q_j}\right\}^{1/s}\right\|_X\lesssim\|f\|_{H_X(\rn)}.
\end{equation}
Since $T$ is bounded on $L^2(\rn)$, it follows that
$$T(f)=\sum_{j=1}^\infty \lambda_jT(a_j) \quad \quad \text{in}\quad L^2(\rn).$$

Let $\phi \in \mathcal{S}(\rn)$ satisfy $\int_{\rn}\phi(x)\,dx\neq0$.
For any $x\in\rn$, define
$$\phi_+^\ast(T(f))(x):= \sup_{t\in(0,\infty)}\lf|T(f)\ast\phi_t(x)\r|.$$
Then, by Lemma \ref{mch}, we know that
$$\|T(f)\|_{H_X(\rn)}\sim\lf\|\phi_+^\ast(T(f))\r\|_X.$$
Thus, to prove Theorem \ref{ccz}, we only need to show that
\begin{equation}\label{ph}
\lf\|\phi_+^\ast(T(f))\r\|_X\lesssim\|f\|_{H_X(\rn)}.
\end{equation}
To this end, from \eqref{de}, we first deduce that
\begin{align}\label{pii}
\lf\|\phi_+^\ast(T(f))\r\|_X&\leq\left\|\sum_{j=1}^\infty\lambda_j\phi_+^\ast
(T(a_j))\right\|_X\notag\\
&\lesssim\left\|\sum_{j=1}^\infty\lambda_j\phi_+^\ast(T(a_j))\mathbf{1}_{4Q_j}\right\|_X+
\left\|\sum_{j=1}^\infty\lambda_j\phi_+^\ast(T(a_j))\mathbf{1}_{(4Q_j)^\complement}\right\|_X\notag\\
&=:\mathrm{I}_1+\mathrm{I}_2
\end{align}
Notice that, for any $j \in \nn$, $\phi_+^\ast(T(a_j))\lesssim M(T(a_j))$
and $a_j\in L^2(\rn)$. Since $M$ and $T$ are both bounded on $L^2(\rn)$,
it follows that
$$\left\|\phi_+^\ast(T(a_j))\mathbf{1}_{4Q_j}\right\|_{L^2(\rn)}
\lesssim\lf\|M(T(a_j))\r\|_{L^2(\rn)}
\lesssim\|T(a_j)\|_{L^2(\rn)}\lesssim\|a_j\|_{L^2(\rn)}
\lesssim\frac{|Q_j|^{1/2}}{\|\mathbf{1}_{Q_j}\|_X},$$
which, combined with Lemma \ref{r}, implies that
\begin{equation}\label{e1}
\mathrm{I}_1\lesssim\left\|\left[\sum_{j=1}^\infty
\left(\frac{\lambda_j}{\|\mathbf{1}_{Q_j}\|_X}\right)^s
\mathbf{1}_{Q_j}\right]^{1/s}\right\|_X.
\end{equation}
Moreover, it was proved in \cite[p.\,2881]{yyyz} that, for any $x\in(4Q_j)^\complement$,
$$|\phi_+^\ast(T(a_j))(x)|\lesssim\frac{\ell_j^\delta}{|x-x_j|^{n+\delta}}
\|a_j\|_{L^2(\rn)}|Q_j|^{1/2},$$
where $x_j$ denotes the center of $Q_j$ and $\ell_j$ its side length.
Form this and the fact that $\theta\in [\frac{n}{n+\delta},1)$, we deduce that
$$|\phi_+^\ast(T(a_j))|\mathbf{1}_{(4Q_j)^\complement}\lesssim\frac{1}{\|\mathbf{1}_{Q_j}\|_X}
[M(\mathbf{1}_{Q_j})]^{\frac{n+\delta}{n}}\lesssim\frac{1}
{\|\mathbf{1}_{Q_j}\|_X}M^{(\theta)}(\mathbf{1}_{Q_j}),
$$
which, together with \eqref{ma}, further implies that
\begin{equation}\label{e2}
\mathrm{I}_2\lesssim\left\|\sum_{j=1}^\infty\frac{\lambda_j}
{\|\mathbf{1}_{Q_j}\|_X}M^{(\theta)}(\mathbf{1}_{Q_j})\right\|_X\lesssim
\left\|\left[\sum_{j=1}^\infty\left(\frac{\lambda_j}
{\|\mathbf{1}_{Q_j}\|_X}\right)^s\mathbf{1}_{Q_j}\right]^{1/s}\right\|_X.
\end{equation}
Combining \eqref{nc}, \eqref{pii}, \eqref{e1} and \eqref{e2}, we  conclude that
$$\lf\|\phi_+^\ast(T(f))\r\|_X\lesssim\|f\|_{H_X(\rn)},$$
which completes the proof of Theorem \ref{ccz}.
\end{proof}

Based on Theorem \ref{ccz}, we can weaken the assumption
that $X$ has an absolutely continuous quasi-norm into a weaker assumption
which is applicable to Morrey spaces.

\begin{theorem}\label{ccz-new}
Assume that $X$ and  $Y$ are two ball quasi-Banach function spaces satisfying both
\eqref{ma} with $0<\theta<s\le1$ and Assumption \ref{a2} with the same $s$ as in \eqref{ma}.
Moreover, assume that $Y$ has an absolutely continuous quasi-norm
and $X$ continuously embeds into $Y$. Let $\delta\in(0,1)$ and $T$ be a
convolutional $\delta$-type Calder\'on--Zygmund operator.
If $\theta\in [\frac{n}{n+\delta},1)$,
then $T$ has a unique extension on $H_X(\rn)$.
Moreover, there exists a positive constant $C$ such that, for any $f\in H_X(\rn)$,
$$\lf\|T(f)\r\|_{H_X(\rn)}\le C\|f\|_{H_X(\rn)}.$$
\end{theorem}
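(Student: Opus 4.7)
My plan follows the proof of Theorem \ref{ccz} as closely as possible, substituting the absolute continuity of $X$ (which was used only to invoke Lemma \ref{atom}) by an approximation argument that leverages the embedding $X\hookrightarrow Y$ and the absolute continuity of $Y$. From $X\hookrightarrow Y$ and the grand maximal function definition, one immediately gets $H_X(\rn)\hookrightarrow H_Y(\rn)$ continuously; Theorem \ref{ccz} applied to $Y$ then supplies a bounded extension of $T$ on $H_Y(\rn)$, which will anchor the limiting procedure.

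The core step is to establish, for every $f\in L^2(\rn)\cap H_X(\rn)$, an $(X,2,d)$-atomic decomposition $f=\sum_{j=1}^\infty\lambda_j a_j$ converging in $L^2(\rn)$, with the coefficient bound
$$\left\|\left\{\sum_{j=1}^\infty\left(\lambda_j/\|\mathbf{1}_{Q_j}\|_X\right)^s\mathbf{1}_{Q_j}\right\}^{1/s}\right\|_X\lesssim\|f\|_{H_X(\rn)},$$
without appealing to absolute continuity of $X$. I would construct this by combining the Lusin-area characterization of Lemma \ref{la} with the tent-space atomic decomposition of Lemma \ref{tad} (neither of which requires absolute continuity), and then converting tent atoms into Hardy atoms through a Calder\'on reproducing formula; the absolute continuity invoked in Lemma \ref{atom} served only to upgrade convergence to the $H_X$-quasi-norm, whereas for $f\in L^2(\rn)\cap H_X(\rn)$ the $L^2$-convergence produced by the construction already suffices for the sequel. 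With this decomposition at hand, the proof of Theorem \ref{ccz} transfers almost verbatim: split $\phi_+^{\ast}(T(f))$ over $4Q_j$ and its complement, bound the near part via $L^2$-boundedness of $T$ and $M$ together with Lemma \ref{r}, and the far part via $\delta$-type kernel smoothness and the pointwise inequality $[M(\mathbf{1}_{Q_j})]^{(n+\delta)/n}\lesssim M^{(\theta)}(\mathbf{1}_{Q_j})$ combined with \eqref{ma}, concluding $\|T(f)\|_{H_X(\rn)}\lesssim\|f\|_{H_X(\rn)}$ on $L^2(\rn)\cap H_X(\rn)$.

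To extend this estimate to all of $H_X(\rn)$, I would approximate from inside using $Y$. For $f\in H_X(\rn)\subset H_Y(\rn)$, the $H_Y$-atomic decomposition provided by Lemma \ref{atom} (legitimately applied to $Y$ since $Y$ is absolutely continuous) produces partial sums $f_k\in L^2(\rn)\cap H_X(\rn)$ converging to $f$ in $H_Y(\rn)$; the key verification is that $\{f_k\}$ is also Cauchy in $H_X(\rn)$, which I expect to follow by applying the $X$-norm coefficient estimate of the core step to the differences $f_k-f_l$ together with the absolute continuity of $Y$ controlling the tails. Defining $T(f)$ as the $H_X$-limit of $T(f_k)$ then yields the required bounded extension, and comparing with the $H_Y$-bounded extension furnished by Theorem \ref{ccz} forces uniqueness.

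The principal obstacle is the extension stage: whereas the core atomic bound on $L^2(\rn)\cap H_X(\rn)$ is largely a matter of retracing the arguments of Theorem \ref{ccz} with a more carefully built atomic decomposition, verifying that the $H_Y$-approximating sequence is Cauchy in the stronger $H_X$-norm is delicate and is where the joint role of $X\hookrightarrow Y$ and the two-space structure must be exploited most carefully.
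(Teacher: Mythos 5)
Your instinct to transfer the work of absolute continuity from $X$ to $Y$ is the right one, but your extension step has a genuine gap. You propose to approximate $f\in H_X(\rn)$ by the partial sums $f_k$ of its $H_Y$-atomic decomposition from Lemma \ref{atom}, and then to argue that these are Cauchy in $H_X(\rn)$. The problem is that the $H_Y$-decomposition controls the coefficient sequence only in $Y$: the atoms are normalized by $\|\mathbf{1}_{Q_j}\|_Y^{-1}$, and the bound supplied is on $\big\|\{\sum_{j}(\lambda_j/\|\mathbf{1}_{Q_j}\|_Y)^s\mathbf{1}_{Q_j}\}^{1/s}\big\|_Y$. After renormalizing those atoms to be $(X,\infty,d)$-atoms, an $H_X$-Cauchy estimate on $f_k-f_l$ requires exactly the same quantity but measured in $\|\cdot\|_X$, and the embedding $X\hookrightarrow Y$ points in the wrong direction to convert a $Y$-bound into an $X$-bound. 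Reapplying your ``core step'' to $f_k-f_l$ does not help either: it produces a fresh decomposition whose coefficient norm is controlled by $\|f_k-f_l\|_{H_X}$, which is precisely what you need to show is small. So the Cauchy claim is unsupported, and the argument collapses at its final stage.

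The paper circumvents every density/limit argument by running the embedding in the opposite direction. It first applies the atomic decomposition theorem (\cite[Theorem 3.7]{shyy}) \emph{directly} to $f\in H_X(\rn)$, obtaining $(X,\infty,d)$-atoms $a_j$ with $f=\sum_j\lambda_j a_j$ in $\mathcal{S}'(\rn)$ and the coefficient norm in $X$ controlled by $\|f\|_{H_X(\rn)}$; then it observes that $\frac{\|\mathbf{1}_{Q_j}\|_X}{\|\mathbf{1}_{Q_j}\|_Y}a_j$ is a $(Y,\infty,d)$-atom and that, by the embedding $X\hookrightarrow Y$, the rescaled coefficient sequence $\lambda_j\|\mathbf{1}_{Q_j}\|_Y/\|\mathbf{1}_{Q_j}\|_X$ has its $Y$-coefficient norm bounded by $\|f\|_{H_X(\rn)}$ as in \eqref{new-1}. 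Hence the \emph{same} series is a legitimate $H_Y$-atomic decomposition, which by \cite[Theorem 3.6 and Corollary 3.11]{shyy} converges to $f$ in $H_Y(\rn)$. Because $T$ is bounded on $H_Y(\rn)$ by Theorem \ref{ccz}, $T(f)=\sum_j\lambda_j T(a_j)$ holds in $H_Y(\rn)$ and hence in $\mathcal{S}'(\rn)$, after which the estimate of $\phi_+^\ast(T(f))$ in $X$ proceeds verbatim as in the proof of Theorem \ref{ccz}. There is no $L^2$ reduction, no tent-space construction, and no approximation step. Your proposed tent-space route to an $(X,2,d)$-decomposition on $L^2(\rn)\cap H_X(\rn)$ is plausible machinery but is not needed, and it does not fill the gap in the extension argument, which is where the real difficulty resides.
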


\begin{proof}
Assume $f\in H_X(\rn)$ and $d \in [\max\{d_X,d_Y\},\infty)\cap\zz_+$ is a fixed integer,
where $d_X$ and $d_Y$ are as in \eqref{dx}. Then, by \cite[Theorem 3.7]{shyy},
we find that there exist a sequence $\{a_j\}_{j=1}^\infty$ of $(X,\infty,d)$-atoms supported,
respectively,  in a sequence $\{Q_j\}^\infty_{j=1}$ of cubes, and a sequence
$\{\lambda_j\}^\infty_{j=1}$ of non-negative numbers,
independent of $f$ but depending on $s$, such that
$$f=\sum_{j=1}^\infty \lambda_ja_j\quad\quad\text{in}\quad\mathcal{S}'(\rn)$$
and
$$\left\|\left\{\sum_{j=1}^\infty\left(\frac{\lambda_j}{\|\mathbf{1}_{Q_j}\|_X}
\right)^s\mathbf{1}_{Q_j}\right\}^{1/s}\right\|_X\lesssim\|f\|_{H_X(\rn)}.$$
From the assumption that $X$ continuously embeds into $Y$, we deduce  that
\begin{align}\label{new-1}
\left\|\left\{\sum_{j=1}^\infty\left(\lambda_j\frac{\|\mathbf{1}_{Q_j}\|_Y}{\|\mathbf{1}_{Q_j}\|_X}
\frac{1}{\|\mathbf{1}_{Q_j}\|_Y}
\right)^s\mathbf{1}_{Q_j}\right\}^{1/s}\right\|_Y
&=\left\|\left\{\sum_{j=1}^\infty\left(\frac{\lambda_j}{\|\mathbf{1}_{Q_j}\|_X}
\right)^s\mathbf{1}_{Q_j}\right\}^{1/s}\right\|_Y\notag\\
&\lesssim \left\|\left\{\sum_{j=1}^\infty\left(\frac{\lambda_j}{\|\mathbf{1}_{Q_j}\|_X}
\right)^s\mathbf{1}_{Q_j}\right\}^{1/s}\right\|_X
\lesssim\|f\|_{H_X(\rn)}.
\end{align}
Moreover, since, for any $j\in\nn$, $a_j$ is an $(X,\infty,d)$-atom,
it follows that, for any $j\in\nn$,
$\frac{\|\mathbf{1}_{Q_j}\|_X}{\|\mathbf{1}_{Q_j}\|_Y}a_j$ is a $(Y,\infty,d)$-atom.
By this, \cite[Theorem 3.6 and Corollary 3.11]{shyy} and \eqref{new-1}, we know that
\begin{equation}\label{conv-f}
\sum_{j=1}^\infty\left(\lambda_j\frac{\|\mathbf{1}_{Q_j}\|_Y}{\|\mathbf{1}_{Q_j}\|_X}\right)
\left(\frac{\|\mathbf{1}_{Q_j}\|_X}{\|\mathbf{1}_{Q_j}\|_Y}a_j\right)=
\sum_{j=1}^\infty \lambda_ja_j=f \quad\quad\text{in}
\quad\mathcal{S}'(\rn) \quad \text{and} \quad H_Y(\rn).
\end{equation}
Furthermore, from Theorem \ref{ccz} and \eqref{conv-f}, we deduce that $T$ is bounded on $H_Y(\rn)$ and hence
$$Tf = \sum_{j=1}^\infty \lambda_jT(a_j) \quad\text{in}\quad H_Y(\rn)\quad
\text{and} \quad \mathcal{S}'(\rn).$$
Using this and repeating the arguments used in the proof of
Theorem \ref{ccz}, we conclude that
$$\|Tf\|_{H_X(\rn)}\lesssim \|f\|_{H_X(\rn)},$$
which completes the proof of Theorem \ref{ccz-new}.
\end{proof}

\begin{remark}\label{morr-lw}
If $X$ is one of the following spaces
$$L^p(\rn),\quad L^p_w(\rn),\quad \ck^\alpha_{p,q}(\rn),
\quad L^{p,q}(\rn),\quad L^{p(\cdot)}(\rn),\quad L^\Phi(\rn)\quad\mathrm{or}\quad (E_\Phi^r)_t(\rn),$$
then we can just choose $Y:=X$ itself. If $X:=\cm^p_q(\rn)$ with $0<q\leq p <\infty$,
then we choose $Y:=L^q_w(\rn)$ with
$w:=[M(\mathbf{1}_{B({\vec 0_n},1)})]^{\widetilde{\theta}}$
and $\widetilde{\theta}\in (1-\frac{q}{p},1)$. By \cite[Theorem 7.2.7]{gl},
we know that $w\in A_1(\rn)$ and therefore $Y$ satisfies both
\eqref{ma} with $0<\theta<s\le 1$ and Assumption \ref{a2} with the same $s$ as in \eqref{ma},
and has an absolutely continuous quasi-norm.
We claim that $X$ continuously embeds into $Y$.
To show this, it suffices to show that there exists a positive constant $C$ such that, for any $f\in \cm^p_q(\rn)$,
\begin{equation}\label{lw_m}
\|f\|_{L^q_w(\rn)}\le C\|f\|_{\cm^p_q(\rn)}.
\end{equation}
Indeed, it is easy to see that, for any $x\in B({\vec 0_n},2)$,
\begin{equation}\label{est_c}
M(\mathbf{1}_{B({\vec 0_n},1)})(x) \leq 1.
\end{equation}
Also, observe that, for any $x\in\rn$,
$$M(\mathbf{1}_{B({\vec 0_n},1)})(x)=\sup_{r\in(0,\infty)}\frac{1}{|B(x,r)|}
\int_{B(x,r)}\mathbf{1}_{B({\vec 0_n},1)}(y)\,dy=\sup_{r\in(0,\infty)}\frac{|B({\vec 0_n},1)\cap B(x,r)|}{|B(x,r)|}$$
and, for any $k\in\nn$, $x\in B({\vec 0_n},2^{k+1})\setminus B({\vec 0_n},2^k)$
and $r\in (0,2^k-1)$, we have $B({\vec 0_n},1)\cap B(x,r)=\emptyset$.
From these, we further deduce that, for any $k\in\nn$ and $x\in B({\vec 0_n},2^{k+1})\setminus B({\vec 0_n},2^k)$,
\begin{equation}\label{est_r}
M(\mathbf{1}_{B({\vec 0_n},1)})(x)=\sup_{r\in(2^k-1,\infty)}\frac{|B({\vec 0_n},1)\cap B(x,r)|}{|B(x,r)|}\lesssim2^{-kn}.
\end{equation}
Combining \eqref{est_c}, \eqref{est_r} and $\widetilde{\theta}\in (1-\frac{q}{p},1)$,
we conclude that, for any $f\in \cm^p_q(\rn)$,
\begin{align*}
\int_{\rn}|f(x)|^q\left[M(\mathbf{1}_{B({\vec 0_n},1)})\right]^{\widetilde{\theta}}\,dx
&=\int_{B({\vec 0_n},2)}|f(x)|^q\left[M(\mathbf{1}_{B({\vec 0_n},1)})\right]^{\widetilde{\theta}}\,dx\\
&\quad+\sum_{k=1}^\infty\int_{B({\vec 0_n},2^{k+1})\setminus B({\vec 0_n},2^k)}|f(x)|^q\left[M(\mathbf{1}_{B({\vec 0_n},1)})\right]^{\widetilde{\theta}}\,dx\\
&\lesssim \|f\|_{\cm^p_q(\rn)}^q+\sum_{k=1}^\infty2^{-kn\widetilde{\theta}}\int_{B({\vec 0_n},2^{k+1})}|f(x)|^q\,dx\\
&\lesssim \|f\|_{\cm^p_q(\rn)}^q+\sum_{k=1}^\infty2^{-kn\widetilde{\theta}}2^{kn(1-\frac{q}{p})}\|f\|_{\cm^p_q(\rn)}^q
\lesssim \|f\|_{\cm^p_q(\rn)}^q,
\end{align*}
which implies that \eqref{lw_m} holds true and hence completes the proof of the above claim.
\end{remark}

Now we establish the boundedness of
 so-called \emph{$\gamma$-order Calder\'on--Zygmund operators}
on the Hardy space $H_X(\rn)$ and we begin with their definitions.

\begin{definition}
For any given $\gamma \in (0,\infty)$, a $\gamma$-order
Calder\'on--Zygmund operator $T$
is a linear bounded operator on $L^2(\rn)$ and there exists a kernel $K$ on
$(\rn\times\rn)\setminus\{(x,x):\ x\in\rn\}$ satisfying:
\begin{enumerate}
\item[{\rm(i)}] For any $\alpha:=(\az_1,\ldots,\az_n) \in \zn_+$ with $|\alpha|\leq\lceil\gamma\rceil-1$,
there exists a positive constant $C$ such that, for any $x,\ y,\ z\in\rn$ with $|x-y|>2|y-z|$,
$$\lf|\partial_x^\alpha K(x,y)-\partial_x^\alpha K(x,z)\r|\le C\frac{|y-z|^{\gamma-\lceil\gamma\rceil+1}}{|x-y|^{n+\gamma}},$$
here and thereafter, $\partial_x^\alpha:=(\partial/\partial x_1)^{\az_1}\cdots(\partial/\partial x_1)^{\az_n}$;
\item[{\rm(ii)}] For any $f\in L^2(\rn)$ with compact support and $x\notin\supp(f)$,
$$Tf(x) =\int_{\supp(f)}K(x,y)f(y)\,dy.$$
\end{enumerate}
\end{definition}
For any $m\in\nn$, an operator $T$ is said to have the \emph{vanishing moments up to order $m$} if, for any $f\in L^2(\rn)$
with compact support satisfying that, for any $\beta:=(\bz_1,\ldots,\bz_n)\in\zn_+$ with
$|\beta|\le m$, $\int_{\rn}x^\beta f(x)\,dx=0$,
it holds true that $\int_{\rn}x^\beta Tf(x)\,dx=0$, here and thereafter,
$x^\beta:=x_1^{\bz_1}\cdots x_n^{\bz_n}$ for any$x:=(x_1,\ldots,x_n)\in\rn$.

Then we have the following conclusion about $\gamma$-order Calder\'on--Zygmund operators.

\begin{theorem}\label{gcz}
Assume that $X$ is a ball quasi-Banach function space satisfying both \eqref{ma}
with $0<\theta<s\le 1$ and Assumption \ref{a2} with the same $s$ as in \eqref{ma}, and having an absolutely
continuous quasi-norm.
Let $\gamma\in(0,\infty)$ and $T$ be a $\gamma$-order
Calder\'on--Zygmund operator and have the vanishing moments up to order $\fg-1$.
If $\theta\in [\frac{n}{n+\gamma},\frac{n}{n+\fg-1})$, then $T$ has a
unique extension on $H_X(\rn)$.
Moreover, there exists a positive constant $C$ such that, for any $f\in H_X(\rn)$,
$$\|T(f)\|_{H_X(\rn)}\le C\|f\|_{H_X(\rn)}.$$
\end{theorem}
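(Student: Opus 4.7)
The plan is to follow the scheme of Theorem~\ref{ccz} with two substitutions: atoms must carry more vanishing moments (to match the $\gamma$-order kernel), and a Taylor-expansion step replaces the one-step mean-value estimate used in the convolutional $\delta$-type case. Since $L^2(\rn)\cap H_X(\rn)$ is dense in $H_X(\rn)$, I reduce to proving $\lf\|T(f)\r\|_{H_X(\rn)}\ls\|f\|_{H_X(\rn)}$ for $f\in L^2(\rn)\cap H_X(\rn)$. A direct calculation using $\theta\in[\frac{n}{n+\gamma},\frac{n}{n+\fg-1})$ gives $n(1/\theta-1)\in(\fg-1,\gamma]$, hence $d_X=\fg$; Lemma~\ref{atom} with $d:=\fg$ then produces a decomposition $f=\sum_j\lambda_ja_j$ in $L^2(\rn)$, in which each $a_j$ is an $(X,2,\fg)$-atom supported in a cube $Q_j$ with $\int_{\rn}x^\beta a_j(x)\,dx=0$ for all $|\beta|\le\fg$, together with the coefficient bound $\|\{\sum_j(\lambda_j/\|\mathbf{1}_{Q_j}\|_X)^s\mathbf{1}_{Q_j}\}^{1/s}\|_X\ls\|f\|_{H_X(\rn)}$. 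In particular, the atoms have more than enough vanishing moments to match $T$'s vanishing-moment hypothesis of order $\fg-1$, and $T(f)=\sum_j\lambda_jT(a_j)$ in $L^2(\rn)$ by the $L^2$-boundedness of $T$.

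Next, by Lemma~\ref{mch}, it suffices to control $\lf\|\phi_+^\ast(T(f))\r\|_X$ for some fixed $\phi\in\cs(\rn)$ with $\int_{\rn}\phi\ne0$. Write
$$\phi_+^\ast(T(f))\le\sum_j\lambda_j\phi_+^\ast(T(a_j))\mathbf{1}_{4Q_j}+\sum_j\lambda_j\phi_+^\ast(T(a_j))\mathbf{1}_{(4Q_j)^\complement}=:\mathrm{I}_1+\mathrm{I}_2.$$
The bound on $\mathrm{I}_1$ is identical to that in Theorem~\ref{ccz}: $\phi_+^\ast\le CM$ combined with $L^2$-boundedness of $M$ and of $T$ gives $\|\phi_+^\ast(T(a_j))\mathbf{1}_{4Q_j}\|_{L^2(\rn)}\ls|Q_j|^{1/2}/\|\mathbf{1}_{Q_j}\|_X$, so Lemma~\ref{r} controls $\|\mathrm{I}_1\|_X$ by $\|\{\sum_j(\lambda_j/\|\mathbf{1}_{Q_j}\|_X)^s\mathbf{1}_{Q_j}\}^{1/s}\|_X$, and hence by $\|f\|_{H_X(\rn)}$.

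The genuinely new step is the kernel estimate for $\mathrm{I}_2$. For $x\notin4Q_j$, I expand $K(x,\cdot)$ in a Taylor polynomial of degree $\fg-1$ at $y=x_j$; the vanishing moments of $a_j$ annihilate the polynomial part, and the regularity condition~(i) applied to the remainder (together with $T$'s own vanishing-moment property to handle the polynomial coefficients meaningfully) yields
$$|T(a_j)(x)|\ls\frac{\ell_j^\gamma|Q_j|^{1/2}}{|x-x_j|^{n+\gamma}}\|a_j\|_{L^2(\rn)}\ls\frac{1}{\|\mathbf{1}_{Q_j}\|_X}\lf[\frac{\ell_j}{|x-x_j|}\r]^{n+\gamma}.$$
Convolving with $\phi_t$ preserves the off-diagonal decay, so
$$\phi_+^\ast(T(a_j))(x)\mathbf{1}_{(4Q_j)^\complement}(x)\ls\frac{1}{\|\mathbf{1}_{Q_j}\|_X}[M(\mathbf{1}_{Q_j})(x)]^{(n+\gamma)/n}\ls\frac{1}{\|\mathbf{1}_{Q_j}\|_X}M^{(\theta)}(\mathbf{1}_{Q_j})(x),$$
the last inequality using $(n+\gamma)/n\ge1/\theta$ and $M(\mathbf{1}_{Q_j})\le1$. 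Then $s$-subadditivity $\sum_j b_j\le(\sum_j b_j^s)^{1/s}$ for $s\in(0,1]$, followed by the Fefferman--Stein inequality~\eqref{ma}, yields
$$\|\mathrm{I}_2\|_X\ls\lf\|\lf\{\sum_j\lf(\frac{\lambda_j}{\|\mathbf{1}_{Q_j}\|_X}\r)^s\mathbf{1}_{Q_j}\r\}^{1/s}\r\|_X\ls\|f\|_{H_X(\rn)}.$$

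The principal obstacle is the Taylor-expansion step above. Condition~(i) is phrased in terms of $\partial_x^\alpha$-derivatives with comparison in the second variable, rather than directly giving $\partial_y^\alpha$-smoothness; one must therefore extract carefully the $(\fg-1)$-st order $y$-regularity of $K$ needed to bound the Taylor remainder at size $|y-x_j|^\gamma\,|x-x_j|^{-(n+\gamma)}$, and then verify that the resulting pointwise bound on $T(a_j)$ is stable under the Peetre-type smoothing by $\phi_t$ that defines $\phi_+^\ast$. Once this off-diagonal decay is secured, the remainder of the argument is structurally identical to the proof of Theorem~\ref{ccz}.
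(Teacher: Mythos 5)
Your overall scheme matches the paper: atomic decomposition via Lemma~\ref{atom} (and the computation $d_X=\fg$ from $n(1/\theta-1)\in(\fg-1,\gamma]$ is correct), reduction via Lemma~\ref{mch}, the near/far split into $\mathrm{I}_1$ and $\mathrm{I}_2$, and $\mathrm{I}_1$ treated exactly as in Theorem~\ref{ccz}. The paper, however, obtains the pointwise control of $\phi_+^\ast(T(a_j))$ on $(4Q_j)^\complement$ by \emph{citing} \cite[pp.\,54--56]{zyyw}, namely $\phi_+^\ast(T(a_j))(x)\lesssim\max\{\ell_j^{\fg}|x-x_j|^{-n-\fg},\ \ell_j^{\gamma}|x-x_j|^{-n-\gamma}\}\,\|a_j\|_{L^2(\rn)}|Q_j|^{1/2}$, and then only notes that for $x\notin 4Q_j$ the $\gamma$-branch dominates and is $\lesssim\|\mathbf{1}_{Q_j}\|_X^{-1}M^{(\theta)}(\mathbf{1}_{Q_j})(x)$. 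Your proposal instead tries to re-derive that bound, and the re-derivation has two genuine gaps.

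First, hypothesis~(i) bounds $\lvert\partial_x^\alpha K(x,y)-\partial_x^\alpha K(x,z)\rvert$: it is a H\"older modulus in the \emph{second} slot of $x$-derivatives of $K$ and furnishes no derivatives $\partial_y^\beta K$ with $1\le|\beta|\le\fg-1$. The degree-$(\fg-1)$ Taylor expansion of $K(x,\cdot)$ at $x_j$ you invoke is therefore unavailable from~(i) once $\gamma>1$; you flag this as the ``principal obstacle'' but do not resolve it, and the remark that $T$'s vanishing moments ``handle the polynomial coefficients'' is a non sequitur (the vanishing-moment hypothesis concerns moments of $Tf$, not Taylor coefficients of the kernel). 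Second, even granting $|Ta_j(x)|\lesssim\ell_j^{\gamma}|x-x_j|^{-n-\gamma}\|a_j\|_{L^2}|Q_j|^{1/2}$ off $2Q_j$, the assertion ``convolving with $\phi_t$ preserves the off-diagonal decay'' fails for $t\gtrsim|x-x_j|$: there the crude bound $|\phi_t\ast Ta_j(x)|\lesssim t^{-n}\|Ta_j\|_{L^1}$ decays only like $|x-x_j|^{-n}$. Passing to the Peetre maximal function in that regime requires subtracting a degree-$(\fg-1)$ Taylor polynomial of $\phi_t(x-\cdot)$ at $x_j$ and using the vanishing moments of $Ta_j$ --- this is precisely where the hypothesis that $T$ has vanishing moments up to order $\fg-1$ actually enters, and where the $\ell_j^{\fg}|x-x_j|^{-n-\fg}$ branch of the cited $\max$ comes from. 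You state that hypothesis but never genuinely use it, and without it the estimate for $\mathrm{I}_2$ does not close.
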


\begin{proof}
Let $f$, $\{\lambda_j\}_{j\in\nn}$ and $\{a_j\}_{j\in\nn}$ be the same as in the proof of Theorem \ref{ccz}. By an argument similar to that used therein, to prove this theorem,
it suffices to show that
$$\left\|\sum_{j=1}^\infty\lambda_j\phi_+^\ast(T(a_j))\mathbf{1}_{(4Q_j)^\complement}
\right\|_X\lesssim \|f\|_{H_X(\rn)}.$$
To this end, we employ some estimates from \cite{zyyw}. Indeed,
the following estimates were obtained in \cite{zyyw}
(see \cite[pp.\,54-56, the estimates of ${\rm II_1}$, ${\rm II_2}$ and ${\rm II_3}$]{zyyw} for the details): for any $x \in (4Q_j)^\complement$,
 \begin{equation*}
 \phi_+^\ast(T(a_j))(x)\lesssim\max\left\{\frac{\ell_j^{\fg}}{|x-x_j|^{n+\fg}},
 \frac{\ell_j^{\gamma}}{|x-x_j|^{n+\gamma}}\right\}\|a_j\|_{L^2(\rn)}|Q_j|^{\frac{1}{2}},
 \end{equation*}
where $x_j$ denotes the center of $Q_j$ and $\ell_j$ its side length.
By this and the fact that $\theta\in [\frac{n}{n+\gamma},\frac{n}{n+\fg-1})$,
we conclude that, for any $x \in (4Q_j)^\complement$,
 $$\phi_+^\ast(T(a_j))(x)\lesssim\frac{\ell_j^{n+\gamma}}{|x-x_j|^{n+\gamma}}
 \frac{1}{\|\mathbf{1}_{Q_j}\|_X}\lesssim
 \frac{1}{\|\mathbf{1}_{Q_j}\|_X}M^{(\theta)}(\mathbf{1}_{Q_j})(x),$$
which, combined with \eqref{ma}, implies that
\begin{equation*}
\left\|\sum_{j=1}^\infty\lambda_j\phi_+^\ast(T(a_j))\mathbf{1}_{(4Q_j)^\complement}\right\|_X
\lesssim\left\|\sum_{j=1}^\infty\frac{\lambda_j}{\|\mathbf{1}_{Q_j}\|_X}M^{(\theta)}
(\mathbf{1}_{Q_j})\right\|_X\lesssim\left\|\left[\sum_{j=1}^\infty\left(\frac{\lambda_j}
{\|\mathbf{1}_{Q_j}\|_X}\right)^s\mathbf{1}_{Q_j}\right]^{1/s}\right\|_X.
\end{equation*}
This finishes the proof of Theorem \ref{gcz}.
\end{proof}

From Theorems \ref{ccz} and \ref{gcz}, and Remarks \ref{r2.1} and \ref{r2.2},
it follows the following conclusions on several concrete function spaces.

\begin{corollary}\label{c3.1}
Let $\delta\in(0,1)$ and $\gamma\in(0,\infty)$.
Assume that $T$ is a convolutional $\delta$-type Calder\'on--Zygmund operator
or a $\gamma$-order Calder\'on--Zygmund operator with the vanishing moments
up to order $\fg-1$.
\begin{enumerate}
\item[{\rm(i)}] If $p\in(\frac{n}{n+\delta},1]$
or $p\in(\frac{n}{n+\gamma},\frac{n}{n+\fg-1}]$, then $T$ has a unique extension on $H^p(\rn)$.
Moreover, there exists a positive constant $C$ such that, for any $f\in H^p(\rn)$,
$\|T(f)\|_{H^p(\rn)}\leq C\|f\|_{H^p(\rn)}$.

\item[{\rm(ii)}]If $\theta\in[\frac{n}{n+\delta},1)$
or $\theta\in[\frac{n}{n+\gamma},\frac{n}{n+\fg-1})$, and
$w\in A_\infty(\rn)$, where $\theta$ is as in Remarks
\ref{r2.1}(b) and \ref{r2.2}(b), then $T$ has a unique extension on $H^p_w(\rn)$.
Moreover, there exists a positive constant $C$ such that, for any $f\in H^p_w(\rn)$,
$\|T(f)\|_{H^p_w(\rn)}\leq C\|f\|_{H^p_w(\rn)}$.

\item[{\rm(iii)}] Let $p\in (0,1]$, $q\in(0,\fz)$ and $\alpha\in(-n/p,\infty)$.
If $\min\{p,[\alpha/n+1/p]^{-1}\}\in(\frac{n}{n+\delta},1]$
or $\min\{p,[\alpha/n+1/p]^{-1}\}\in(\frac{n}{n+\gamma},\frac{n}{n+\fg-1}]$,
then $T$ has a unique extension on $H\ck^\alpha_{p,q}(\rn)$.
Moreover, there exists a positive constant $C$ such that,
for any $f\in H\ck^\alpha_{p,q}(\rn)$,
$\|T(f)\|_{H\ck^\alpha_{p,q}(\rn)}\leq C\|f\|_{H\ck^\alpha_{p,q}(\rn)}.$

\item[{\rm(iv)}] Let $p\in (0,1]$ and $q\in(0,\fz)$.
If $\min\{p,q\}\in(\frac{n}{n+\delta},1]$ or  $\min\{p,q\}\in(\frac{n}{n+\gamma},
\frac{n}{n+\fg-1}]$, then $T$ has a unique extension on $H^{p,q}(\rn)$.
Moreover, there exists a positive constant $C$ such that, for any $f\in H^{p,q}(\rn)$,
$\|T(f)\|_{H^{p,q}(\rn)}\leq C\|f\|_{H^{p,q}(\rn)}.$

\item[{\rm(v)}] Let $p:\ \rn\to (0,1]$ be globally log-H\"older continuous
with $p_-$ and $p_+$ being as in \eqref{eq-p}.
If $p_-\in(\frac{n}{n+\delta},1]$ or  $p_-\in(\frac{n}{n+\gamma},\frac{n}{n+\fg-1}]$,
then $T$ has a unique extension on $H^{p(\cdot)}(\rn)$. Moreover,
there exists a positive constant $C$ such that, for any $f\in H^{p(\cdot)}(\rn)$,
$\|T(f)\|_{H^{p(\cdot)}(\rn)}\leq C\|f\|_{H^{p(\cdot)}(\rn)}.$

\item[{\rm(vi)}] Let $\Phi$ be an Orlicz function with lower type $p_\Phi^-\in(0,1)$
and upper type $p_\Phi^+=1$. If $p_\Phi^-\in(\frac{n}{n+\delta},1]$ or
$p_\Phi^-\in(\frac{n}{n+\gamma},\frac{n}{n+\fg-1}]$, then
$T$ has a unique extension on $H^\Phi(\rn)$. Moreover, there exists a positive
constant $C$ such that, for any $f\in H^\Phi(\rn)$,
$\|T(f)\|_{H^\Phi(\rn)}\leq C\|f\|_{H^\Phi(\rn)}.$

\item[{\rm(vii)}] Let $t,\ r\in(0, \fz)$
and $\Phi$ be an Orlicz function with
$p_\Phi^-,\ p_\Phi^+\in(0,\fz)$. If
$\min\{p_\Phi^-, r\}\in(\frac{n}{n+\dz},1]$ or $\min\{p_\Phi^-, r\}\in
(\frac{n}{n+\gamma}, \frac{n}{n+\fg-1}]$, then
$T$ has a unique extension on $(HE^r_\Phi)_t(\rn)$.
Moreover, there exists a positive
constant $C$ such that, for any $t\in(0, \fz)$ and $f\in (HE^r_\Phi)_t(\rn)$,
$$\|T(f)\|_{(HE^r_\Phi)_t(\rn)}\leq C\|f\|_{(HE^r_\Phi)_t(\rn)}.$$
\end{enumerate}
\end{corollary}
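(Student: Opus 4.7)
The plan is to invoke Theorem \ref{ccz} (for convolutional $\delta$-type operators) and Theorem \ref{gcz} (for $\gamma$-order operators) on the seven concrete spaces listed, by verifying in each case that the underlying ball quasi-Banach function space $X$ satisfies \eqref{ma}, Assumption \ref{a2}, and the absolutely continuous quasi-norm condition, with the parameter $\theta$ lying in the critical range $[\frac{n}{n+\delta},1)$ (for a convolutional $\delta$-type operator) or $[\frac{n}{n+\gamma},\frac{n}{n+\fg-1})$ (for a $\gamma$-order operator with vanishing moments up to order $\fg-1$).

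For each case the recipe is identical. First I isolate the governing index of $X$, namely $p$ in (i) and (ii), $\min\{p,[\alpha/n+1/p]^{-1}\}$ in (iii), $\min\{p,q\}$ in (iv), $p_-$ in (v), $p_\Phi^-$ in (vi), and $\min\{p_\Phi^-,r\}$ in (vii); the hypothesis of the corollary is precisely that this governing index lies above the critical threshold $n/(n+\delta)$ or $n/(n+\gamma)$. Next, I pick $\theta$ inside the critical range and strictly below the governing index, choose $s\in(\theta,1]$ and $q\in(1,\infty]$ appropriately, and then the relevant parts of Remark \ref{r2.1} deliver \eqref{ma} (and \eqref{ma2} when needed), while the matching parts of Remark \ref{r2.2} yield Assumption \ref{a2}. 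For the weighted case (ii), the extra ingredient is that $w\in A_{p/\theta}(\rn)$ may be arranged by taking $\theta$ sufficiently close to $1$, using $w\in A_\infty(\rn)$.

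It remains to note that every space in the list has an absolutely continuous quasi-norm: this is already recorded in the remark following Theorem \ref{ccz} for the spaces $L^p(\rn)$, $L^p_w(\rn)$, $\ck^\alpha_{p,q}(\rn)$, $(E_\Phi^r)_t(\rn)$, $L^{p(\cdot)}(\rn)$, $L^\vz(\rn)$, and $L^{p,q}(\rn)$, and the Musielak--Orlicz argument sketched there specializes to the Orlicz case $L^\Phi(\rn)$. With these ingredients in place, each part of the corollary follows from Theorem \ref{ccz} in the convolutional $\delta$-type case and from Theorem \ref{gcz} in the $\gamma$-order case.

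The only point requiring genuine care is the simultaneous compatibility of $\theta$, $s$, and $q$: the hypothesis on the governing index guarantees that the admissible interval for $\theta$ is nonempty, and the flexibility in choosing $s$ just above $\theta$ and $q$ just above the governing index then makes the verifications of \eqref{ma}, \eqref{ma2}, and Assumption \ref{a2} a direct application of the tabulated criteria in Remarks \ref{r2.1} and \ref{r2.2}. I expect no other obstacle, since the Morrey case---the only example in those remarks lacking an absolutely continuous quasi-norm---is not claimed here (it would instead be handled by the embedding device of Theorem \ref{ccz-new} together with Remark \ref{morr-lw}).
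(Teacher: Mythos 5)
Your proposal follows exactly the paper's route: Corollary \ref{c3.1} is deduced by applying Theorem \ref{ccz} (convolutional case) and Theorem \ref{gcz} ($\gamma$-order case) to each concrete space, with \eqref{ma} and Assumption \ref{a2} supplied by Remarks \ref{r2.1} and \ref{r2.2} after choosing $\theta$ in the critical window and strictly below the governing index, $s\in(\theta,1]$ small enough and $q$ large enough, and with the absolutely continuous quasi-norm taken from the remark preceding Theorem \ref{ccz}; the exclusion of the Morrey case is also the paper's choice. The one substantive slip is in the weighted case (ii): you claim that $w\in A_{p/\theta}(\rn)$ ``may be arranged by taking $\theta$ sufficiently close to $1$, using $w\in A_\infty(\rn)$.'' This is backwards: the classes $A_q(\rn)$ increase in $q$, so letting $\theta\uparrow 1$ shrinks $p/\theta$ toward $p\le 1$ and makes the condition $w\in A_{p/\theta}(\rn)$ harder (essentially $A_1$), and it certainly does not follow from $w\in A_\infty(\rn)$. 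In fact no arrangement is needed or possible there: the hypothesis of (ii), ``$\theta$ is as in Remarks \ref{r2.1}(b) and \ref{r2.2}(b),'' already fixes a $\theta$ in the critical range for which $w\in A_{p/\theta}(\rn)$, and one then only needs the openness (self-improvement) of Muckenhoupt classes to pick $s\in(\theta,p)$ with $w\in A_{p/s}(\rn)$ so that Remark \ref{r2.2}(b) applies. With that correction, your verification of all seven cases goes through as in the paper.
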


\begin{remark}\label{r3.1}
Corollary \ref{c3.1}(i) was obtained in \cite{am86,dj84}.
Moreover, Corollary \ref{c3.1}(ii) was established in \cite[Theorem 3]{qy00}.
For the Herz--Hardy space $H\ck^\alpha_{p,q}(\rn)$,
the conclusion of Corollary \ref{c3.1}(iii) was obtained in \cite[Theorems 1 and 2]{ll97}.
For the variable Hardy space $H^{p(\cdot)}(\rn)$,
Corollary \ref{c3.1}(v) was established in \cite[Theorems 5.3]{s13}.
For Corollary \ref{c3.1}(vi), see also \cite[Proposition 5.3 and Theorem 5.5]{ns14}.
Furthermore, Corollary \ref{c3.1}(vii) was obtained in \cite[Theorems 6.11 and 6.13]{zyyw}.
\end{remark}

Similarly to Theorem \ref{ccz-new}, we have the following theorem, which is also applicable
to Morrey spaces; we omit the details.

\begin{theorem}\label{gcz-new}
Assume that $X$ and  $Y$ are two ball quasi-Banach function spaces satisfying both
\eqref{ma} with $0<\theta<s\le 1$ and Assumption \ref{a2} with the same $s$ as in \eqref{ma}.
Moreover, assume that $Y$ has an absolutely continuous quasi-norm
and $X$ continuously embeds into $Y$.
Let $\gamma\in(0,\infty)$ and $T$ be a $\gamma$-order
Calder\'on--Zygmund operator and have the vanishing moments up to order $\fg-1$.
If $\theta\in [\frac{n}{n+\gamma},\frac{n}{n+\fg-1})$, then $T$ has a
unique extension on $H_X(\rn)$.
Moreover, there exists a positive constant $C$ such that, for any $f\in H_X(\rn)$,
$$\|T(f)\|_{H_X(\rn)}\le C\|f\|_{H_X(\rn)}.$$
\end{theorem}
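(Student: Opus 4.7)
The plan is to combine the embedding/atomic-lifting trick already used in the proof of Theorem \ref{ccz-new} with the pointwise kernel estimates used in the proof of Theorem \ref{gcz}. First, for any $f\in H_X(\rn)$, I would fix an integer $d\in[\max\{d_X,d_Y\},\fz)\cap\zz_+$ with $d\ge\fg-1$ (so that $d$-atoms have the vanishing moments matching those of $T$), and apply \cite[Theorem 3.7]{shyy} to obtain an $(X,\fz,d)$-atomic decomposition $f=\sum_{j=1}^\fz\lambda_j a_j$ in $\cs'(\rn)$ with each $a_j$ supported in a cube $Q_j$ and
$$\left\|\left\{\sum_{j=1}^\fz\left(\frac{\lambda_j}{\|\mathbf{1}_{Q_j}\|_X}\right)^s\mathbf{1}_{Q_j}\right\}^{1/s}\right\|_X\ls\|f\|_{H_X(\rn)}.$$

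Next, since $X$ continuously embeds into $Y$, I would rescale each atom, observing that $\frac{\|\mathbf{1}_{Q_j}\|_X}{\|\mathbf{1}_{Q_j}\|_Y}a_j$ is a $(Y,\fz,d)$-atom, and, arguing exactly as in the proof of Theorem \ref{ccz-new}, conclude that the rescaled series converges in both $\cs'(\rn)$ and $H_Y(\rn)$ to $f$. Theorem \ref{gcz} applied to $Y$ then yields $T$ bounded on $H_Y(\rn)$, so $Tf=\sum_{j=1}^\fz\lambda_j T(a_j)$ in $H_Y(\rn)$ and in $\cs'(\rn)$. Using Lemma \ref{mch}, it then suffices to control $\|\phi_+^\ast(Tf)\|_X$, and I would split this as in \eqref{pii} into the local piece $\mathrm{I}_1$ supported on $\bigcup_j 4Q_j$ and the nonlocal piece $\mathrm{I}_2$ supported on the complements.

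For $\mathrm{I}_1$, I would pass from $(X,\fz,d)$- to $(X,2,d)$-atoms via the standard observation $\|a_j\|_{L^2(\rn)}\le|Q_j|^{1/2}\|\mathbf{1}_{Q_j}\|_X^{-1}$, apply the $L^2$-boundedness of $M$ and $T$, and then invoke Lemma \ref{r} exactly as in the proof of Theorem \ref{ccz}. For $\mathrm{I}_2$, I would import verbatim the kernel-derived pointwise estimate recalled in the proof of Theorem \ref{gcz} (which in turn is taken from \cite[pp.\,54--56]{zyyw}): for any $x\in(4Q_j)^\complement$,
$$\phi_+^\ast(T(a_j))(x)\ls\max\left\{\frac{\ell_j^{\fg}}{|x-x_j|^{n+\fg}},\frac{\ell_j^{\gamma}}{|x-x_j|^{n+\gamma}}\right\}\|a_j\|_{L^2(\rn)}|Q_j|^{1/2}.$$
The hypothesis $\theta\in[\frac{n}{n+\gamma},\frac{n}{n+\fg-1})$ is precisely what converts the worse of the two decay rates into $\ls\|\mathbf{1}_{Q_j}\|_X^{-1}M^{(\theta)}(\mathbf{1}_{Q_j})(x)$, after which \eqref{ma} and Lemma \ref{r} close the estimate.

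The only genuinely subtle point, and the one that distinguishes this theorem from Theorem \ref{gcz}, is the justification that the series $\sum_j\lambda_j T(a_j)$ converges in a space in which $\phi_+^\ast$ is well behaved; this is exactly where the auxiliary space $Y$ with absolutely continuous quasi-norm is needed, so that Theorem \ref{gcz} can be applied to $Y$ and $T$-applied to a convergent $\cs'(\rn)$-sum has a legitimate meaning. Once this convergence issue is settled, the remaining estimates are a word-for-word repetition of the two displayed chains \eqref{e1}--\eqref{e2} in the proof of Theorem \ref{ccz}, with the convolutional kernel bound replaced by the $\gamma$-order kernel bound above. Combining the two pieces yields $\|\phi_+^\ast(Tf)\|_X\ls\|f\|_{H_X(\rn)}$, which completes the proof.
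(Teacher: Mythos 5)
Your proposal is correct and follows essentially the same route as the paper, which proves this theorem by simply invoking the argument of Theorem \ref{ccz-new} (atomic decomposition of $f$, rescaling the $(X,\infty,d)$-atoms into $(Y,\infty,d)$-atoms via the embedding, applying Theorem \ref{gcz} on $H_Y(\rn)$ to justify $Tf=\sum_j\lambda_jT(a_j)$ in $\mathcal{S}'(\rn)$, and then repeating the $\mathrm{I}_1$/$\mathrm{I}_2$ estimates with the $\gamma$-order kernel bound from \cite{zyyw}); the paper omits these details, and you have filled them in faithfully, including the key role of $Y$'s absolutely continuous quasi-norm in settling the convergence issue.
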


\section{Boundedness of $S^0_{1,0}(\rn)$ pseudo-differential operators}\label{s4}

In this section, we obtain the boundedness of $S^0_{1,0}(\rn)$
pseudo-differential operators on
the local Hardy space $h_X(\rn)$ (see Definition \ref{lh} below for its definition).
We begin with the notion of pseudo-differential operators. We refer the reader to \cite{g}
or \cite[Definition 8.6.4]{ylk} for more details.

\begin{definition}
A \emph{symbol} in $S^0_{1,0}(\rn)$ is a smooth function $\sigma$
defined on $\rn\times\rn$ satisfying that, for any multi-indices $\alpha,\  \beta \in \zn_+$,
there exists a positive constant $C_{(\alpha,\beta)}$,
independent of $x$ and $\xi$, but depending on $\alpha$ or $\beta$,
such that, for any $x,\,\xi\in\rn$,
$$\lf|\partial_x^\alpha\partial_\xi^\beta\sigma(x,\xi)\r|\le
C_{(\alpha, \beta)}(1+|\xi|)^{-|\beta|}.$$

Let $f\in\mathcal{S}(\rn)$. Then the \emph{$S_{1,0}^0(\rn)$ pseudo-differential
operator $T$} is defined by setting, for any $x \in \rn$,
$$T(f)(x):=\int_{\rn}\sigma(x,\xi)e^{ix\cdot\xi}\mathcal{F}(f)(\xi)\,d\xi.$$
\end{definition}

We also recall some local-type maximal functions and the local Hardy-type space $h_X(\rn)$
associated with the ball quasi-Banach space $X$ as follows
(see also \cite[Definition 5.1]{shyy} for more details).

\begin{definition}\label{lh}
Let $N\in\nn$, $a,\ b\in(0,\infty)$, $\Phi\in\mathcal{S}(\rn)$ and $f\in \mathcal{S}'(\rn)$.
\begin{enumerate}
\item[{\rm(i)}] The \emph{local radial maximal function} $m(f,\Phi)$
is defined by setting, for any $x\in\rn$,
$$m(f,\Phi)(x):=\sup_{t\in(0,1)}|(\Phi_t\ast f)(x)|.$$
\item[{\rm(ii)}] The \emph{local non-tangential maximal function}
$m^\ast_a(f,\Phi)$ is defined by
setting, for any $x\in\rn$,
$$m^\ast_a(f,\Phi)(x):=\sup_{t\in(0,1)}\left\{\sup_{y\in\rn,|y-x|<at}|\Phi_t\ast f(y)|\right\}.$$
\item[{\rm(iii)}] The \emph{local maximal function $m_b^{\ast\ast}(f,\Phi)$
of Peetre type}
is defined by setting, for any $x\in\rn$,
$$m_b^{\ast\ast}(f,\Phi)(x):= \sup_{(y,t)\in\rn\times(0,1)}
\frac{|(\Phi_t\ast f)(x-y)|}{(1+t^{-1}|y|)^b}.$$
\item[{\rm(iv)}] The \emph{local grand maximal function $m_{b,N}^{\ast\ast}(f)$
of Peetre type} is defined by setting, for any $x\in\rn$,
$$m_{b,N}^{\ast\ast}(f)(x):= \sup_{\psi\in\mathcal{F}_N}\left
\{\sup_{(y,t)\in\rn\times(0,1)}\frac{|(\psi_t\ast f)(x-y)|}{(1+t^{-1}|y|)^b}\right\}.$$
\item[{\rm(v)}] Assume further that $\int_{\rn}\Phi(x)\,dx\neq 0$.
Then the \emph{local Hardy-type space $h_X(\rn)$} is defined to be
the set of all $g\in\mathcal{S}'(\rn)$ such that
    $$\|g\|_{h_X(\rn)}:=\lf\|m_b^{\ast\ast}(g,\Phi)\r\|_X <\infty,$$
    where $m_b^{\ast\ast}(g,\Phi)$ is as in {\rm{(iii)}} with $b$ sufficiently large.
\end{enumerate}
\end{definition}

The following maximal function characterizations of the space $h_X(\rn)$
were obtained in \cite[Theorem 5.3]{shyy}.

\begin{lemma}\label{lgc}
Let $X$ be a ball quasi-Banach function space and $\Phi\in\mathcal{S}(\rn)$
satisfy $\int_{\rn}\Phi(x)\,dx\neq 0$. Assume that $r,\ a,\ A,\ b\in(0,\infty)$
satisfy $(b-A)r>n$. Let $N\in\nn$ satisfy $N \geq\lceil b+2\rceil$.
If $X$ is strictly $r$-convex and
there exists a positive constant $C$ such that, for any $z\in\rn$ and $f\in X$,
\begin{equation}\label{ix}
\left\|\left\{\int_{z+[0,1]^n}|f(\cdot-y)|^r\,dy\right\}^{1/r}\right\|_X
\le C (1+|z|)^A\|f\|_X,
\end{equation}
then
$$\lf\|m(f,\Phi)\r\|_X \sim\lf\|m^\ast_a(f,\Phi)\r\|_X \sim\lf\|m_b^{\ast\ast}(f,\Phi)\r\|_X
\sim\lf\|m_{b,N}^{\ast\ast}(f)\r\|_X,$$
where the positive equivalence constants are independent of $f$.
\end{lemma}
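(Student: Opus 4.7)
The plan is to establish the chain
$$\|m(f,\Phi)\|_X \le \|m^\ast_a(f,\Phi)\|_X \lesssim \|m_b^{\ast\ast}(f,\Phi)\|_X \lesssim \|m_{b,N}^{\ast\ast}(f)\|_X \lesssim \|m(f,\Phi)\|_X.$$
The first three inequalities are routine. The first is immediate on taking $y=x$ in the definition of $m^\ast_a$. The second follows because, whenever $|y-x|<at$ with $t\in(0,1)$, one has $|\Phi_t\ast f(y)|\le(1+a)^b m_b^{\ast\ast}(f,\Phi)(x)$, so that $\|m^\ast_a(f,\Phi)\|_X\lesssim\|m_b^{\ast\ast}(f,\Phi)\|_X$. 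The third holds because, with $N\ge\lceil b+2\rceil$, a suitable normalization of $\Phi$ lies (up to a constant depending only on a finite number of Schwartz seminorms) in the class $\mathcal{F}_N$, so that $m_b^{\ast\ast}(f,\Phi)\lesssim m_{b,N}^{\ast\ast}(f)$ pointwise.

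The real work is the closing estimate $\|m_{b,N}^{\ast\ast}(f)\|_X\lesssim\|m(f,\Phi)\|_X$. The approach adapts the Fefferman--Stein grand maximal function argument to the local setting. The key is a pointwise Peetre-type bound: choosing an exponent $r\in(0,\infty)$ matching the strict $r$-convexity of $X$, for any $\psi\in\mathcal{F}_N$, $t\in(0,1)$ and $x,y\in\rn$, one establishes
$$\frac{|\psi_t\ast f(x-y)|^r}{(1+t^{-1}|y|)^{br}}\lesssim \{M([m(f,\Phi)]^r)(x-y)\}\cdot(\text{decay factor in }|y|),$$
obtained via a localized Calder\'on reproducing formula that represents $\psi_t\ast f$ as a superposition of $\Phi_s\ast f$ with $s\in(0,1)$, combined with Lemma \ref{ep}-style pointwise manipulations.

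Taking the supremum in $y$ and $t$ and splitting $\rn=\bigcup_{z\in\zn}(z+[0,1]^n)$ produces a bound of the form
$$[m_{b,N}^{\ast\ast}(f)(x)]^r\lesssim\sum_{z\in\zn}(1+|z|)^{-br}\int_{z+[0,1]^n}\left[M\!\left([m(f,\Phi)]^r\right)(x-y)\right]dy.$$
Passing to the $X^{1/r}$-quasi-norm, the strict $r$-convexity of $X$ (which, coupled with the hypotheses, yields boundedness of $M$ on $X^{1/r}$) absorbs the Hardy--Littlewood operator, and \eqref{ix} applied termwise gives a factor $(1+|z|)^{Ar}\|m(f,\Phi)\|_X^r$ per term. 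The condition $(b-A)r>n$ makes $\sum_{z\in\zn}(1+|z|)^{(A-b)r}$ convergent, which closes the proof.

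The main obstacle is the tail: for the part of the supremum with $|y|$ large, one cannot exploit scaling as in the global Hardy space theory because the parameter $t$ is constrained to $(0,1)$. The decay $(1+t^{-1}|y|)^{-b}\lesssim|y|^{-b}$ for $t\in(0,1)$ must be balanced precisely against the polynomial growth allowed by the translation inequality \eqref{ix}, and it is exactly the hypothesis $(b-A)r>n$ that makes this balance work. Carrying out this tail analysis while keeping the inner ($|y|\le 1$) regime controlled by the maximal operator on $X^{1/r}$ is the delicate point of the argument.
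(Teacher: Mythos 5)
The paper itself does not prove this lemma; it cites it as \cite[Theorem 5.3]{shyy}, so I am assessing your argument on its own terms. The three easy inequalities in your chain are fine. The gap is in the closing estimate $\|m_{b,N}^{\ast\ast}(f)\|_X\lesssim\|m(f,\Phi)\|_X$, and it is a real one: your argument invokes boundedness of the Hardy--Littlewood maximal operator $M$ on $X^{1/r}$, and you claim that this follows from strict $r$-convexity together with the hypotheses. It does not. Strict $r$-convexity only says $X^{1/r}$ is a genuine Banach function space (triangle inequality), and \eqref{ix} is a \emph{translation} estimate over unit cubes, not a dilation/averaging estimate; neither of these, nor their combination, grants $M$-boundedness on $X^{1/r}$. (As a basic counterexample to the implication you assert: $L^1(\rn)$ is strictly $1$-convex and satisfies \eqref{ix} with $A=0$, yet $M$ is unbounded on $L^1$.) Since the lemma carries no hypothesis on $M$, any pointwise Peetre estimate that leaves $M$ on the right-hand side cannot be combined with \eqref{ix} to close the argument — you would be left with $\|M([m(f,\Phi)]^r)\|_{X^{1/r}}$, not $\|m(f,\Phi)\|_X^r$.

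The repair, and what the hypotheses actually support, is to get a pointwise estimate that contains only plain averages over unit cubes — roughly
\begin{equation*}
\lf[m_{b,N}^{\ast\ast}(f)(x)\r]^r\lesssim\sum_{z\in\zn}(1+|z|)^{-br}\int_{z+[0,1]^n}\lf[m(f,\Phi)(x-y)\r]^r\,dy,
\end{equation*}
without $M$ appearing at all. The reason this can be done in the local setting, and is precisely the point of the $t\in(0,1)$ constraint, is that $(1+t^{-1}|y|)^{-b}\le(1+|y|)^{-b}$ uniformly for $t\in(0,1)$, and the Rychkov-type local Calder\'on reproducing formula can be arranged so that the remaining $t$-dependence contributes a convergent factor rather than a scaling singularity that forces $M$. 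Once the estimate is in this $M$-free form, the strict $r$-convexity lets you pass the sum through the $X^{1/r}$-norm, \eqref{ix} converts each term into $(1+|z|)^{Ar}\|m(f,\Phi)\|_X^r$, and $(b-A)r>n$ makes the $z$-sum converge — exactly the balance you describe at the end. You actually diagnosed the tail correctly, but mislocated the difficult regime: the $|y|$-large tail is the \emph{easy} part here (polynomial decay beats polynomial growth), whereas the genuine delicacy is eliminating the $t^{-n}$ scaling near $|y|$ small without introducing $M$, and your proposal does not resolve that.
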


\begin{remark}\label{r4.1}
We point out that \eqref{ix} holds true for any ball quasi-Banach function space
$X$ satisfying  \eqref{ma}. Indeed, for any given $r\in(0,\infty)$ and any $f\in X$,  $x,\ z\in\rn$,
\begin{align*}
\left\{\int_{z+[0,1]^n}|f(x-y)|^r\,dy\right\}^{1/r}&=
\left\{\int_{x-z-[0,1]^n}|f(y)|^r\,dy\right\}^{1/r}\\
&\le \left\{\int_{B(x,\sqrt{n}(1+|z|))}|f(y)|^r\,dy\right\}^{1/r}\\
&\sim(1+|z|)^{\frac{n}{r}}\left\{\frac{1}{|B(x,\sqrt{n}(1+|z|))|}
\int_{B(x,\sqrt{n}(1+|z|))}|f(y)|^r\,dy\right\}^{1/r}\\
&\lesssim (1+|z|)^{\frac{n}{r}}M^{(r)}(f)(x),
\end{align*}
where the implicit positive constants are independent of $f$, $x$ and $z$,
but may depend on $n$ and $r$.
From this and the assumption that $X$ satisfies \eqref{ma},
we deduce that, for any given $r:=\theta$ and $A>\frac{n}{r}$ and any $f\in X$ and $z\in\rn$,
$$\left\|\left\{\int_{z+[0,1]^n}|f(\cdot-y)|^r\,dy\right\}^{1/r}\right\|_X\lesssim
(1+|z|)^{\frac{n}{r}}\left\|M^{(r)}(f)\right\|_X\lesssim (1+|z|)^A\|f\|_X,$$
where the implicit positive constants are independent of $f$ and $z$, but may depend on $n$ and $r$.
\end{remark}

Now we state our main result of this section as follows.

\begin{theorem}\label{pdo}
Let $X$ be a ball quasi-Banach function space satisfying
both \eqref{ma} and Assumption \ref{a2} with the same $s\in (0,1]$,
and having an absolutely continuous quasi-norm.
Assume that $T$ is an $S_{1,0}^0(\rn)$ pseudo-differential operator.
Then there exists a positive constant $C$ such that, for any $f\in h_X(\rn)$,
\begin{equation}\label{pdob}
\|T(f)\|_{h_X(\rn)}\le C\|f\|_{h_X(\rn)}.
\end{equation}
\end{theorem}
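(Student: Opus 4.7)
The proof will closely parallel that of Theorem \ref{ccz}. The plan is threefold: (i) establish an atomic characterization of $h_X(\rn)$; (ii) use the local maximal-function characterization of Lemma \ref{lgc} (whose assumption \eqref{ix} is automatic by Remark \ref{r4.1}) to reduce \eqref{pdob} to controlling $\|m(T(f),\Phi)\|_X$ for a single fixed $\Phi\in\cs(\rn)$ with $\int_{\rn}\Phi\neq0$; and (iii) bound $m(T(a))$ pointwise on each atom $a$ and assemble the estimates through the Fefferman--Stein inequality \eqref{ma} together with Lemma \ref{r}.

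For step (i) I would first prove that every $f\in h_X(\rn)$ admits a decomposition $f=\sum_{j}\lambda_j a_j$ in $\cs'(\rn)$ with
$$\lf\|\lf\{\sum_j\lf(\lambda_j/\|\mathbf{1}_{Q_j}\|_X\r)^s\mathbf{1}_{Q_j}\r\}^{1/s}\r\|_X\ls\|f\|_{h_X(\rn)},$$
where each $a_j$ is a \emph{local $(X,\fz,d)$-atom} with $d\in[d_X,\fz)\cap\zz_+$ fixed: either (a) $\supp a_j\subset Q_j$ with $\ell(Q_j)<1$, $\|a_j\|_{L^\fz(\rn)}\le\|\mathbf{1}_{Q_j}\|_X^{-1}$, and $\int_{\rn}x^\beta a_j(x)\,dx=0$ for all $|\beta|\le d$; or (b) $\ell(Q_j)=1$ and $\|a_j\|_{L^\fz(\rn)}\le\|\mathbf{1}_{Q_j}\|_X^{-1}$ with no moment condition imposed. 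The argument adapts the Calder\'on--Zygmund decomposition underlying Lemma \ref{atom} to the local grand maximal function $m_{b,N}^{\ast\ast}$, the absolute continuity of the quasi-norm of $X$ supplying the $\cs'(\rn)$-convergence.

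Given such a decomposition, the $L^2(\rn)$-boundedness of $T$ (the Calder\'on--Vaillancourt theorem) yields $T(f)=\sum_j\lambda_j T(a_j)$ in $\cs'(\rn)$, and I would split, mimicking \eqref{pii},
$$m(T(f),\Phi)\ls\sum_j\lambda_j\,m(T(a_j),\Phi)\mathbf{1}_{4Q_j}+\sum_j\lambda_j\,m(T(a_j),\Phi)\mathbf{1}_{(4Q_j)^\complement}.$$
The near-field sum is treated exactly as the term $\mathrm{I}_1$ in the proof of Theorem \ref{ccz}, via the $L^2$-bounds on $M$ and on $T$ combined with Lemma \ref{r}. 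The key pointwise estimate to be proved for the far-field sum is
$$m(T(a_j),\Phi)(x)\ls\|\mathbf{1}_{Q_j}\|_X^{-1}M^{(\theta)}(\mathbf{1}_{Q_j})(x)\qquad\bigl(x\in(4Q_j)^\complement\bigr),$$
for a suitable $\theta\in(0,1)$, after which \eqref{ma} and Lemma \ref{r} conclude as in \eqref{e2}.

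The main obstacle is this far-field estimate, because the kernel of an $S_{1,0}^0(\rn)$-operator is only Calder\'on--Zygmund-like on the local region $|x-y|\le1$, while being pseudo-local with super-polynomial off-diagonal decay for $|x-y|>1$. For \emph{small} atoms of type (a), the standard Taylor expansion of the kernel together with the $d$-fold cancellation yields decay proportional to $\ell(Q_j)^{n+d+1}/|x-x_j|^{n+d+1}$ inside $\{|x-x_j|\le1\}$ and super-polynomial decay outside, which is readily dominated by $\|\mathbf{1}_{Q_j}\|_X^{-1}M^{(\theta)}(\mathbf{1}_{Q_j})$ once $\theta$ is chosen close enough to $1$. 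The harder case is the \emph{large} atoms of type (b): since no moment condition is available, the bound must be extracted solely from $\|a_j\|_{L^\fz(\rn)}\le\|\mathbf{1}_{Q_j}\|_X^{-1}$ together with the rapid off-diagonal smoothing built into the symbol class $S_{1,0}^0(\rn)$, by expressing $|T(a_j)(x)|$ as a convolution of $\mathbf{1}_{Q_j}$ against a rapidly decaying kernel; this is the principal technical hurdle. Merging both regimes into a single uniform dominant of the form $\|\mathbf{1}_{Q_j}\|_X^{-1}M^{(\theta)}(\mathbf{1}_{Q_j})$, the Fefferman--Stein inequality \eqref{ma} and Lemma \ref{r} then close the proof exactly as in Theorem \ref{ccz}.
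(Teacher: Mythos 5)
Your plan for proving \eqref{pdob} itself is essentially the paper's proof: reduce via Lemma \ref{lgc} to bounding $\|\sup_{t\in(0,1)}|\Phi_t\ast Tf|\|_X$, decompose $f$ into local atoms, split each $\sup_{t\in(0,1)}|\Phi_t\ast Ta_j|$ into the near-field part on $4Q_j$ (handled by $L^2$- or $L^q$-boundedness of $M$ and $T$ plus Lemma \ref{r}) and the far-field part, and dominate the latter pointwise by $\|\mathbf{1}_{Q_j}\|_X^{-1}M^{(\theta)}(\mathbf{1}_{Q_j})$ — with Taylor expansion and the moment conditions for small atoms, and with the arbitrary-polynomial off-diagonal decay of the (mollified) kernel for the large atoms without moments (the paper does exactly this, quoting Goldberg's Lemma \ref{kop} and the estimates of \cite{yy}), then conclude by \eqref{ma} and Lemma \ref{r}. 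Where you genuinely deviate is step (i): you propose to obtain the atomic characterization of $h_X(\rn)$ by adapting the Calder\'on--Zygmund decomposition underlying Lemma \ref{atom} to the local grand maximal function, whereas the paper (Theorem \ref{ah}) avoids redoing that machinery by writing $f=(1-\psi(D))f+\psi(D)f$ via Lemma \ref{rhh}, applying the known $H_X(\rn)$ atomic decomposition to the high-frequency piece, and decomposing the low-frequency piece over a lattice of unit-scale cubes into $L^\infty$-normalized large atoms (the coefficient estimate there coming from the local nontangential maximal function and Lemma \ref{lgc}); the paper's route is shorter and safer in the ball quasi-Banach setting, while yours would reprove Goldberg-type theory from scratch. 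One bookkeeping point you should make explicit: the paper first proves \eqref{pdob} for $f\in L^2(\rn)\cap h_X(\rn)$, using that the atomic series then converges in $L^2(\rn)$ (Remark \ref{l2}) so that $Tf=\sum_j\lambda_jTa_j$ makes sense, and then invokes the density of $L^2(\rn)\cap h_X(\rn)$ in $h_X(\rn)$ (Corollary \ref{ld}, which is where the absolutely continuous quasi-norm is used); your appeal to $L^2$-boundedness to pass $T$ through a series converging only in $\mathcal{S}'(\rn)$ needs either this density step or an explicit duality argument using that $T$ extends continuously to $\mathcal{S}'(\rn)$.
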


To show Theorem \ref{pdo}, we borrow some ideas from \cite{g} and \cite{ylk}.
We first need to establish an atomic characterization of $h_X(\rn)$.

\begin{definition}
Let $X$ be a ball quasi-Banach function space and $q \in [1, \infty]$.
Assume that $d \in \zz_+$ satisfies $d \geq d_X$, where $d_X$ is as in \eqref{dx}.
Then a measurable function $a$ is called a \emph{local-$(X, q, d)$-atom} if
\begin{enumerate}
\item[{\rm(i)}] there exists a cube $Q\subset\rn$ such that
$\supp (a):=\{x\in\rn:\ a(x)\neq0\} \subset Q$;
\item[{\rm(ii)}] $\|a\|_{L^q(\rn)} \le \frac{|Q|^{1/q}}{\|\mathbf{1}_Q\|_X}$;
\item[{\rm(iii)}] if $|Q| < 1$,
then $\int_{\rn}a(x)x^\alpha\,dx=0$ for any multi-index
$\alpha \in \zn_+$ with $|\alpha|\le d$.
\end{enumerate}
\end{definition}

The following lemma clarifies the relation between $H_X(\rn)$ and
$h_X(\rn)$ (see \cite[Lemma 5.5]{shyy} for details).

\begin{lemma}\label{rhh}
Let $X$ be a ball quasi-Banach function space satisfying
\eqref{ma}. Let $\psi \in \mathcal{S}(\rn)$ satisfy $\mathbf{1}_{Q({\vec 0_n},2)}
\leq\psi\leq\mathbf{1}_{Q({\vec 0_n},4)}$.
Then, for any $b \in (1,\infty)$ sufficiently large and $f \in \mathcal{S}'(\rn)$,
$$\|f\|_{h_X(\rn)} \sim\lf\|(\psi_1^\ast f)_b\r\|_X+\lf\|(1-\psi(D))f\r\|_{H_X(\rn)},$$
where $(\psi_1^\ast f)_b$ is as in \rm{\eqref{p}} and the positive equivalence
constants are independent of $f$ and $\psi$.
\end{lemma}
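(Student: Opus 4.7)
The plan is to exploit the spectral decomposition $f = \psi(D)f + (1-\psi(D))f$ and establish the two directions of the equivalence separately, using the triangle inequality together with the equivalence of various maximal-function characterizations provided by Lemma \ref{lgc} (which is available because \eqref{ma} implies the hypothesis \eqref{ix}, as explained in Remark \ref{r4.1}).

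For the direction $\|f\|_{h_X(\rn)} \lesssim \|(\psi_1^\ast f)_b\|_X + \|(1-\psi(D))f\|_{H_X(\rn)}$, first apply the triangle inequality to get
\begin{equation*}
m_b^{\ast\ast}(f,\Phi) \le m_b^{\ast\ast}(\psi(D)f,\Phi) + m_b^{\ast\ast}((1-\psi(D))f,\Phi).
\end{equation*}
The second term is dominated pointwise by the global Peetre-type maximal $M_b^{\ast\ast}((1-\psi(D))f,\Phi)$ (since the local supremum over $t\in(0,1)$ is smaller than the global one), whose $X$-norm is exactly $\|(1-\psi(D))f\|_{H_X(\rn)}$ by Definition \ref{H}. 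For the first term, exploit that $\psi(D)f$ is bandlimited with Fourier support in $Q(\vec{0}_n,4)$: by a Paley--Wiener/Bernstein-type argument, for every $t\in(0,1)$ and $y\in\rn$ the convolution $\Phi_t\ast\psi(D)f(x-y)$ can be reproduced from $\psi(D)f$ itself at scale $1$ through a rapidly decreasing kernel, yielding the pointwise bound $m_b^{\ast\ast}(\psi(D)f,\Phi)(x)\lesssim (\psi_1^\ast f)_b(x)$ for $b$ sufficiently large.

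For the converse direction, note that $\psi(D)f = \check\psi\ast f$ where $\check\psi := \mathcal{F}^{-1}\psi \in \mathcal{S}(\rn)$ satisfies $\int_{\rn}\check\psi(x)\,dx = (2\pi)^{n/2}\psi(\vec{0}_n) \neq 0$, so $\check\psi$ is an admissible test function for the definition of $h_X(\rn)$. Consequently $(\psi_1^\ast f)_b$ is the Peetre-type local maximal associated with $\check\psi$ evaluated at the boundary scale $t=1$, which, via Lemma \ref{lgc}, is equivalent to the local grand maximal and hence controlled by $\|f\|_{h_X(\rn)}$ (the boundary scale $t=1$ versus the open range $t\in(0,1)$ is reconciled by a continuity argument in $t$, or by reducing scale to $t=1/2$ and dilating). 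For $\|(1-\psi(D))f\|_{H_X(\rn)} \lesssim \|f\|_{h_X(\rn)}$, write $M_b^{\ast\ast}((1-\psi(D))f,\Phi)$ as the sum of its small-scale part ($t\in(0,1)$) and its large-scale part ($t\ge 1$). The small-scale part is pointwise dominated by $m_b^{\ast\ast}(f,\Phi)$ plus an error from commuting $1-\psi(D)$ with the convolution; the large-scale part is handled by observing that $1-\psi$ vanishes on $Q(\vec{0}_n,2)$, so for $t\ge 1$ the multiplier $\mathcal{F}\Phi(t\cdot)(1-\psi(\cdot))$ produces a Schwartz kernel with very strong decay that transfers large-scale information into small-scale maximals of $f$, with \eqref{ma} providing the scale-summing step in $X$.

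The main obstacle I anticipate is the large-scale estimate for $(1-\psi(D))f$: one must show that the contribution from $t\ge 1$ in $M_b^{\ast\ast}((1-\psi(D))f,\Phi)$ is controlled without direct access to scales $t\ge 1$ of $f$ itself. The resolution will be a dyadic decomposition of the annular region $\{\xi \in \rn: |\xi| \ge 2\}$, together with sharp Schwartz decay estimates for each resulting convolution kernel, followed by an application of the Fefferman--Stein vector-valued maximal inequality \eqref{ma} to sum the dyadic pieces in the quasi-norm of $X$. A secondary, more routine issue is managing the boundary scale $t=1$ inside $(\psi_1^\ast f)_b$ relative to the open range $t\in(0,1)$ in $m_b^{\ast\ast}$, which is handled by a simple continuity-plus-rescaling argument.
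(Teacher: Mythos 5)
The paper never proves Lemma \ref{rhh}: it is imported verbatim from \cite[Lemma 5.5]{shyy}, so there is no in-paper argument to compare against, and your sketch has to be judged against the standard proof—which, in outline, is exactly what you propose, and it is essentially sound. The forward direction and the bound $\|(\psi_1^\ast f)_b\|_X\lesssim\|f\|_{h_X(\rn)}$ are fine: for $t\in(0,1)$ one has $(1+|y|/t)^{-b}\le(1+|y|)^{-b}$ and $\int_{\rn}|\Phi_t(z)|(1+|z|)^b\,dz\lesssim1$, which already gives $m_b^{\ast\ast}(\psi(D)f,\Phi)\lesssim(\psi_1^\ast f)_b$ pointwise (no Paley--Wiener/Bernstein input is needed), and your rescaling of $\psi(D)f=\check\psi\ast f$ to the scale $t=1/2$ correctly resolves the boundary-scale issue so that Lemma \ref{lgc} applies; do note, as the paper also leaves implicit, that Lemma \ref{lgc} requires strict $r$-convexity of $X$ in addition to \eqref{ix}, the latter being supplied by Remark \ref{r4.1}. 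In the remaining estimate $\|(1-\psi(D))f\|_{H_X(\rn)}\lesssim\|f\|_{h_X(\rn)}$, your split at $t=1$ is the right move, and for $t\in(0,1)$ the ``commutator error'' is precisely $\Phi_t\ast\psi(D)f$, again controlled by $(\psi_1^\ast f)_b$ and hence by the grand maximal function. Two remarks on the part you single out as the main obstacle ($t\ge1$): first, the Fefferman--Stein inequality \eqref{ma} is not the tool that does the work there and is in fact unnecessary; since $1-\psi$ vanishes on $Q(\vec{0}_n,2)$, the kernel $K_t:=\mathcal{F}^{-1}[\mathcal{F}\Phi(t\cdot)(1-\psi)]$ has every Schwartz seminorm $O(t^{-N})$ for $t\ge1$, so writing $K_t\ast f$ as a convolution at the fixed admissible scale $1/2$ and noting that adjusting the Peetre weight $(1+|y|/t)^{-b}$ costs at most a factor $t^b$, one gets the pointwise bound $\sup_{t\ge1}\sup_{y}|K_t\ast f(x-y)|(1+|y|/t)^{-b}\lesssim m_{b,N}^{\ast\ast}(f)(x)$, after which a single application of Lemma \ref{lgc} finishes; second, if you do run the dyadic frequency decomposition, it must be confined to $t\ge1$ (as your split indeed arranges), because for $t\ll2^{-j}$ the piece at frequency $\sim 2^j$ admits no smallness and a decomposition used at all scales would not sum pointwise. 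With these adjustments your proposal is a correct, if only sketched, proof along the standard route.
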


Based on Lemma \ref{rhh}, we have the following conclusion.

\begin{theorem}\label{ah}
Let $X$ be a ball quasi-Banach function space, satisfying
both \eqref{ma} and Assumption \ref{a2} with the same $s\in (0,1]$,
and $d\in \nn$ such that $d\geq d_X$, where $d_X$ is as in \eqref{dx}.
Then $f \in h_X(\rn)$
if and only if $f\in \mathcal{S}'(\rn)$ and there exist a sequence $\{a_j\}_{j=1}^\infty$ of local-$(X, \infty, d)$-atoms supported, respectively,  in cubes $\{Q_j\}_{j=1}^\infty$ and a sequence
$\{\lambda_j\}_{j=1}^\infty$ of non-negative numbers such that
\begin{equation}\label{lad}
f=\sum_{j=1}^\infty \lambda_j a_j \quad\quad \text{in}\quad \mathcal{S}'(\rn)
\end{equation}
and
\begin{equation}\label{lne}
\left\|\left[\sum_{j=1}^\infty\left(\frac{\lambda_j}{\|\mathbf{1}_{Q_j}\|_X}\right)^s
\mathbf{1}_{Q_j}\right]^{1/s}\right\|_X <\infty.
\end{equation}
Moreover,
$$\|f\|_{h_X(\rn)}\sim\inf\lf\{\left\|\left[\sum_{j=1}^\infty
\left(\frac{\lambda_j}{\|\mathbf{1}_{Q_j}\|_X}\right)^s
\mathbf{1}_{Q_j}\right]^{1/s}\right\|_X\r\},$$
where the infimum is taken over all decompositions of $f$ as in \eqref{lad}
and the positive equivalence constants are independent of $f$ but may depend on $s$.
\end{theorem}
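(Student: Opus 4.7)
My plan is to prove the two directions separately via Lemma \ref{rhh}, which gives $\|f\|_{h_X(\rn)}\sim\|(\psi_1^\ast f)_b\|_X+\|(1-\psi(D))f\|_{H_X(\rn)}$ for a suitable $\psi\in\cs(\rn)$. For the necessity direction, I first write $f=\psi(D)f+(1-\psi(D))f$ in $\cs'(\rn)$. Applying Lemma \ref{atom} to the high-frequency piece $(1-\psi(D))f\in H_X(\rn)$ yields an atomic decomposition $(1-\psi(D))f=\sum_j\lz_ja_j$ in $\cs'(\rn)$ with $(X,\infty,d)$-atoms $\{a_j\}$ supported in cubes $\{Q_j\}$ and coefficient sum controlled by $\|(1-\psi(D))f\|_{H_X(\rn)}\ls\|f\|_{h_X(\rn)}$; since $H_X(\rn)$-atoms enjoy moment vanishing unconditionally, they are automatically local-$(X,\infty,d)$-atoms.

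The substantive step is the decomposition of the low-frequency piece $g:=\psi(D)f=(\cf^{-1}\psi)\ast f\in C^\infty(\rn)$ into local atoms needing no moment conditions. I will choose a smooth partition of unity $\{\eta_k\}_{k\in\zn}$ subordinate to a lattice of unit cubes $\{Q_k:=Q(k,1)\}_{k\in\zn}$ and set $b_k:=\eta_kg$, supported in a fixed dilate of $Q_k$. Since $|Q_k|=1$, no moment condition is required; writing $b_k=\lz_k\wz{a}_k$ with $\lz_k:=\|b_k\|_{L^\infty(\rn)}\|\mathbf{1}_{Q_k}\|_X$ makes $\wz{a}_k$ a local-$(X,\infty,d)$-atom. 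The pointwise bound $|g(x)|\le(1+\sqrt n)^b(\psi_1^\ast f)_b(z)$ for all $x,z\in Q_k$, immediate from the definition of $(\psi_1^\ast f)_b$ at $t=1$, yields $\lz_k/\|\mathbf{1}_{Q_k}\|_X\ls\inf_{z\in Q_k}(\psi_1^\ast f)_b(z)$, and the bounded overlap of the $Q_k$ then produces $\|[\sum_k(\lz_k/\|\mathbf{1}_{Q_k}\|_X)^s\mathbf{1}_{Q_k}]^{1/s}\|_X\ls\|(\psi_1^\ast f)_b\|_X\ls\|f\|_{h_X(\rn)}$. Merging this with the atomic decomposition of the high-frequency piece completes the necessity direction.

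For the sufficiency, I start from $f=\sum_j\lz_ja_j$ with local atoms obeying \eqref{lne}. By Definition \ref{lh}, $\|f\|_{h_X(\rn)}=\|m_b^{\ast\ast}(f,\Phi)\|_X$, and the subadditivity $[m_b^{\ast\ast}(f,\Phi)]^s\le\sum_j\lz_j^s[m_b^{\ast\ast}(a_j,\Phi)]^s$ (valid since $s\in(0,1]$) reduces the problem to controlling $m_b^{\ast\ast}(a_j,\Phi)$ for each $j$. I split into $J_1:=\{j:|Q_j|<1\}$ (whose atoms are genuine $(X,\infty,d)$-atoms) and $J_2:=\{j:|Q_j|\ge1\}$. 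For $j\in J_1$, standard Schwartz-tail estimates exploiting the moment vanishing give $m_b^{\ast\ast}(a_j,\Phi)(x)\ls\|\mathbf{1}_{Q_j}\|_X^{-1}M^{(\tz)}(\mathbf{1}_{Q_j})(x)$. For $j\in J_2$, the restriction $t\in(0,1)$ in the local maximal function confines the effective support of $\Phi_t\ast a_j$ to a unit-neighborhood of $Q_j$, producing the analogous bound with a fixed dilate $cQ_j$ in place of $Q_j$. Assumption \ref{a} via \eqref{ma} then aggregates these into $\|f\|_{h_X(\rn)}\ls\|[\sum_j(\lz_j/\|\mathbf{1}_{Q_j}\|_X)^s\mathbf{1}_{Q_j}]^{1/s}\|_X$. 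The main obstacle, I expect, will lie in the low-frequency step: rigorously verifying that $\psi(D)f$ is smooth with pointwise values captured by $(\psi_1^\ast f)_b$, and that the partition-of-unity pieces $\eta_k\psi(D)f$ reassemble into $\psi(D)f$ in $\cs'(\rn)$ with an aggregate $X$-norm comparable to $\|(\psi_1^\ast f)_b\|_X$; the remaining estimates reduce to Fefferman--Stein type manipulations already familiar from the $H_X(\rn)$-theory.
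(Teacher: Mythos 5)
Your overall architecture matches the paper's. For the necessity you do exactly what the paper does: split $f=\psi(D)f+(1-\psi(D))f$, use Lemma \ref{rhh} to see that the high-frequency part lies in $H_X(\rn)$ with norm $\lesssim\|f\|_{h_X(\rn)}$ and decompose it by Lemma \ref{atom} into $(X,\infty,d)$-atoms (which are indeed local atoms), and cut the low-frequency part at unit scale with coefficients given by local $L^\infty$ norms. The only differences are cosmetic: the paper uses a sharp tiling by cubes of side length $2$ (so only one indicator is active at each point) and bounds the coefficients by a local non-tangential maximal function, then invokes Lemma \ref{lgc}, whereas you use a smooth partition of unity and bound directly by $(\psi_1^\ast f)_b$, closing with Lemma \ref{rhh}; both work, though with your partition the supporting cubes are dilates $cQ_k$, so you should either run \eqref{lne} with $\|\mathbf{1}_{cQ_k}\|_X$ (harmless, by bounded overlap and monotonicity) or pass back to $Q_k$ via $M^{(\theta)}$ and \eqref{ma}.

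For the sufficiency your route genuinely differs from the paper's: you estimate the local Peetre maximal function $m_b^{\ast\ast}(a_j,\Phi)$ of every atom pointwise by $\|\mathbf{1}_{Q_j}\|_X^{-1}M^{(\theta)}(\mathbf{1}_{Q_j})$ and then apply \eqref{ma}, while the paper first uses Lemma \ref{lgc} to replace $m_b^{\ast\ast}$ by the radial maximal function $m(\cdot,\varphi)$ with $\supp(\varphi)\subset B(\vec 0_n,1)$, handles the small-cube atoms by the $H_X(\rn)$ reconstruction theorem \cite[Theorem 3.6]{shyy}, and the large-cube atoms by an $L^q$-estimate plus Lemma \ref{r}. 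Your plan can be made rigorous, but one step is wrong as stated: for large cubes it is not true that the restriction $t\in(0,1)$ confines the ``effective support'' of the atom's maximal function to a unit neighbourhood of $Q_j$, because $m_b^{\ast\ast}(a_j,\Phi)$ involves a supremum over all shifts $y\in\rn$ and is therefore strictly positive everywhere. What is true, and what your argument actually needs, is the polynomial decay coming from the weight: splitting into $|y|\le|x-x_j|/2$ (where the Schwartz tails of $\Phi_t$ with $t<1$ give decay) and $|y|>|x-x_j|/2$ (where $(1+|y|/t)^{-b}\lesssim t^b|x-x_j|^{-b}$), and using $\ell(Q_j)\ge1$ together with $b\ge n/\theta$, one obtains $m_b^{\ast\ast}(a_j,\Phi)(x)\lesssim\|\mathbf{1}_{Q_j}\|_X^{-1}M^{(\theta)}(\mathbf{1}_{Q_j})(x)$; the small-cube case likewise requires this two-regime analysis in $y$ (combining the moment-vanishing bound $\min\{1,(\ell_j/t)^{n+d+1}\}$ with the weight bound $\min\{1,(t/|x-x_j|)^b\}$), not just ``Schwartz tails plus moments''. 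Either carry out these weighted estimates with $b$ sufficiently large, or do as the paper does and first invoke Lemma \ref{lgc} (available since \eqref{ix} follows from \eqref{ma} by Remark \ref{r4.1}) to work with a compactly supported radial kernel, for which your support claim is literally correct.
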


\begin{proof}
Let $f \in h_X(\rn)$. Choose $\psi \in \mathcal{S}(\rn)$ such that
$$\mathbf{1}_{Q({\vec 0_n},2)}\leq\psi \le \mathbf{1}_{Q({\vec 0_n},4)}$$
and $\int_{\rn}\mathcal{F}^{-1}(\psi)(x)\,dx\neq 0$. Then
\begin{equation}\label{dof}
f=(1-\psi(D))f +\psi(D)f.
\end{equation}
By Lemma \ref{rhh}, we know that
$$\lf\|(\psi_1^\ast f)_b\r\|_X+\lf\|(1-\psi(D))f\r\|_{H_X(\rn)}
\sim \|f\|_{h_X(\rn)} < \infty,$$
which implies that $(1-\psi(D))f \in H_X(\rn)$.
From Lemma \ref{atom}, it follows that there exist a sequence
$\{a_{1,j}\}_{j=1}^\infty$ of $(X,\infty,d)$-atoms supported,
respectively, in cubes $\{Q_{1,j}\}_{j=1}^\infty$
and a sequence $\{\lambda_{1,j}\}_{j=1}^\infty$ of non-negative numbers such that
\begin{equation}\label{term11}
(1-\psi(D))f =\sum_{j=1}^\infty \lambda_{1,j} a_{1,j}
\quad\quad \text{in}\quad \mathcal{S}'(\rn)
\end{equation}
and
\begin{equation}\label{term12}
\left\|\left[\sum_{j=1}^\infty\left(\frac{\lambda_{1,j}}
{\|\mathbf{1}_{Q_{1,j}}\|_X}\right)^s
\mathbf{1}_{Q_{1,j}}\right]^{1/s}\right\|_X\lesssim
\|(1-\psi(D))f\|_{H_X(\rn)}\lesssim \|f\|_{h_X(\rn)}.
\end{equation}
Let $2\zz:=\{2z:\ z\in\zz\}$ and $\mathcal{Q}_1$ be the collection of all cubes in $\rn$
which are translations of $(0,2]^n$ and whose vertices lie on the lattice $(2\zz)^n$.
Take an arrangement of all cubes of $\mathcal{Q}_1$,
which is denoted by $\{Q_{2,j}\}_{j=1}^\infty$.
Then, for almost every $x\in\rn$, we have
$$\psi(D)f(x) = \sum_{j=1}^\infty\psi(D)f(x)\mathbf{1}_{Q_{2,j}}(x).$$
For any $j\in\nn$, if $\|\psi(D)f\|_{L^\infty(Q_{2,j})}=0$, define
$$\lambda_{2,j}:=0\quad \text{and}\quad a_{2,j}:=0;$$
if $\|\psi(D)f\|_{L^\infty(Q_{2,j})}\neq0$, define
$$\lambda_{2,j}:=\|\mathbf{1}_{Q_{2,j}}\|_X\|
\psi(D)f\|_{L^\infty(Q_{2,j})} \quad \text{and} \quad
a_{2,j}:=\frac{\psi(D)f\mathbf{1}_{Q_{2,j}}}
{\|\mathbf{1}_{Q_{2,j}}\|_X\|\psi(D)f\|_{L^\infty(Q_{2,j})}}.$$
Then, for almost every $x\in\rn$,
\begin{equation}\label{term21}
\psi(D)f(x)=\sum_{j=1}^\infty\lambda_{2,j}a_{2,j}(x).
\end{equation}
Since $|Q_{2,j}|>1$ for any $j\in\nn$, it follows that,
for any $j\in\nn$, $a_{2,j}$ is a local-$(X,\infty,d)$-atom.
Therefore, from \eqref{term11} and \eqref{term21}, we deduce that \eqref{lad} holds true.

We now prove \eqref{lne}. To show \eqref{lne}, by \eqref{term12}, it suffices to prove that
\begin{equation}\label{term22}
\left\|\left[\sum_{j=1}^\infty\left(\frac{\lambda_{2,j}}
{\|\mathbf{1}_{Q_{2,j}}\|_X}\right)^s
\mathbf{1}_{Q_{2,j}}\right]^{1/s}\right\|_X\lesssim \|f\|_{h_X(\rn)}.
\end{equation}
Indeed, for any fixed $x_0\in\rn$, there exists only one $j_0\in\zz_+$
such that $x_0 \in Q_{2,j_0}$. Moreover, there exists a positive
constant $c_0$ such that, for any $j\in\nn$ and $x\in Q_{2,j}$,
$Q_{2,j}\subset B(x,c_0)$.
Then, from the definition of $\{\lambda_{2,j}\}_{j=1}^\infty$, we deduce that
\begin{align}\label{10.31.1}
\left[\sum_{j=1}^\infty\left(\frac{\lambda_{2,j}}{\|\mathbf{1}_{Q_{2,j}}\|_X}\right)^s
\mathbf{1}_{Q_{2,j}}(x_0)\right]^{1/s}&=\frac{\lambda_{2,j_0}}
{\|\mathbf{1}_{Q_{2,j_0}}\|_X}
=\lf\|\psi(D)f\r\|_{L^\infty(Q_{2,j_0})}\notag\\
&\leq\sup_{y\in B(x_0,c_0)}\lf|(\mathcal{F}^{-1}\psi)\ast f(y)\r|\notag\\
&\leq\sup_{t\in(0,2)}\left\{\sup_{y\in B(x_0,c_0t)}
\lf|(\mathcal{F}^{-1}\psi)_t\ast f(y)\r|\right\}\notag\\
&\lesssim\sup_{t\in(0,1)}\left\{\sup_{y\in B(x_0,2c_0t)}
\lf|\widetilde{\psi}_t\ast f(y)\r|\right\},
\end{align}
where  $\widetilde{\psi}(\cdot):=\mathcal{F}^{-1}\psi(\frac{\cdot}{2})$.
By this and Lemma \ref{lgc}, we conclude that
$$\left\|\left[\sum_{j=1}^\infty\left(\frac{\lambda_{2,j}}
{\|\mathbf{1}_{Q_{2,j}}\|_X}\right)^s
\mathbf{1}_{Q_{2,j}}\right]^{1/s}\right\|_X \lesssim \lf\|m_{2c_0}^\ast
(f,\widetilde{\psi})\r\|_X\lesssim \lf\|m_{b,N}^{\ast\ast}(f)\r\|_X
\sim \|f\|_{h_X(\rn)},$$
which further implies that \eqref{term22} holds true.
Furthermore, from \eqref{term12} and \eqref{term22}, it follows that
$$\left\|\left[\sum_{i=1}^2\sum_{j=1}^\infty\left(\frac{\lambda_{i,j}}
{\|\mathbf{1}_{Q_{i,j}}\|_X}\right)^s
\mathbf{1}_{Q_{i,j}}\right]^{1/s}\right\|_X \lesssim\|f\|_{h_X(\rn)}.
$$

Conversely,  let $\{a_{1,j}\}_{j=1}^\infty\cup\{a_{2,j}\}_{j=1}^\infty$ be a
sequence of local-$(X, \infty, d)$-atoms  supported, respectively,
in cubes $\{Q_{1,j}\}_{j=1}^\infty\cup\{Q_{2,j}\}_{j=1}^\infty$
and $\{\lambda_{1,j}\}_{j=1}^\infty\cup\{\lambda_{2,j}\}_{j=1}^\infty$
a sequence of non-negative numbers such that
\begin{equation*}
\sum_{i=1}^{2}\sum_{j=1}^\infty \lambda_{i,j} a_{i,j} \quad\quad \text{converge in}\quad \mathcal{S}'(\rn)
\end{equation*}
and
\begin{equation*}
\left\|\left[\sum_{i=1}^2\sum_{j=1}^\infty\left(\frac{\lambda_{i,j}}
{\|\mathbf{1}_{Q_{i,j}}\|_X}\right)^s
\mathbf{1}_{Q_{i,j}}\right]^{1/s}\right\|_X<\infty,
\end{equation*}
where, for any $j\in\nn$,  $\ell(Q_{1,j})<1$ and
$\ell(Q_{2,j})\geq1$. Here, $\ell(Q)$ denotes the side length of the cube $Q$.

Let $\varphi \in \mathcal{S}(\rn)$, $\supp(\varphi) \subset B(\vec0_n,1)$ and
$\int_{\rn}\varphi(x)\,dx\neq0$. To finish the proof of this theorem,
by Lemma \ref{lgc}, it suffices to show
\begin{equation}\label{9.6.3}
\left\|m\left(\sum_{i=1}^2\sum_{j=1}^\infty \lambda_{i,j}
a_{i,j}, \varphi\right)\right\|_X\lesssim\left\|\left[\sum_{i=1}^2
\sum_{j=1}^\infty\left(\frac{\lambda_{i,j}}{\|\mathbf{1}_{Q_{i,j}}\|_X}\right)^s
\mathbf{1}_{Q_{i,j}}\right]^{1/s}\right\|_X.
\end{equation}
Observe that
\begin{align*}
\lf\|m\left(\sum_{i=1}^2\sum_{j=1}^\infty \lambda_{i,j}a_{i,j}, \varphi\right)\r\|_X&=
\lf\|\sup_{t\in(0,1)}\left|\varphi_t\ast\left(\sum_{i=1}^2\sum_{j=1}^\infty
\lambda_{i,j}a_{i,j}\right)\right|\r\|_X\\
&\ls\lf\|\sum_{j=1}^\infty \lambda_{1,j}\sup_{t\in(0,1)}|\varphi_t\ast a_{1,j}|\r\|_X
+\lf\|\sum_{j=1}^\infty \lambda_{2,j}\sup_{t\in(0,1)}|\varphi_t\ast a_{2,j}|\r\|_X\\
&=:\mathrm{I}_1+\mathrm{I}_2.
\end{align*}
By the facts that, for any $j\in\nn$, $a_{1,j}$ is also an $(X,\infty,d)$-atom and
$m(a_{1,j},\varphi)\leq M(a_{1,j},\varphi)$, together with \cite[Theorem 3.6]{shyy},
we know that
\begin{equation}\label{9.6.1}
\mathrm{I}_1\lesssim \left\|\left[\sum_{j=1}^\infty\left(
\frac{\lambda_{1,j}}{\|\mathbf{1}_{Q_{1,j}}\|_X}\right)^s
\mathbf{1}_{Q_{1,j}}\right]^{1/s}\right\|_X.
\end{equation}
Moreover, for any $j\in \nn$ and $t\in(0,1)$, from
$\supp(\varphi) \subset B(\vec0_n,1)$ and $\ell(Q_{2,j})\ge1$,
we deduce that $\supp(\varphi_t\ast a_{2,j})\subset 2Q_{2,j}$.
Let $q\in(1,\infty)$. Then, for any $t\in(0,1)$, by \cite[Theorem 2.1.10]{gl}
and the fact that $M$ is bounded on $L^q(\rn)$, we find that, for any $j\in\nn$,
\begin{equation*}
\left\|\mathbf{1}_{2Q_{2,j}}\sup_{t\in(0,1)}
\lf|\varphi_t\ast a_{2,j}\r|\right\|_{L^q(\rn)}
\lesssim\lf\|M(a_{2,j})\r\|_{L^q(\rn)} \lesssim \|a_{2,j}\|_{L^q(\rn)}
\lesssim \frac{|Q_{2,j}|^{1/q}}{\|\mathbf{1}_{Q_{2,j}}\|_X},
\end{equation*}
which, combined with Lemma \ref{r}, further implies that
\begin{equation}\label{9.6.2}
\mathrm{I}_2\lesssim \left\|\left[\sum_{j=1}^\infty
\left(\frac{\lambda_{2,j}}{\|\mathbf{1}_{Q_{2,j}}\|_X}\right)^s
\mathbf{1}_{Q_{2,j}}\right]^{1/s}\right\|_X.
\end{equation}
Combining \eqref{9.6.1} and \eqref{9.6.2}, we know that
\eqref{9.6.3} holds true. This finishes the proof of Theorem \ref{ah}.
\end{proof}

\begin{remark}\label{l2}
Let $f \in L^2(\rn) \cap h_X(\rn)$ and $\psi$ be as in Theorem \ref{ah}.
It is easy to see that $\psi(D)f \in L^2(\rn)$. Then, by Lemma \ref{rhh},
we further know that $(1-\psi(D))f \in L^2(\rn)\cap H_X(\rn)$, which,
together with \eqref{de} and \eqref{term21}, further
implies that \eqref{lad} also holds true in $L^2(\rn)$.
\end{remark}

Combining Theorem \ref{ah} and the fact that $L^2(\rn) \cap H_X(\rn)$
is dense in $H_X(\rn)$, we obtain the following conclusion.

\begin{corollary}\label{ld}
Let $X$ be as  in Theorem {\rm\ref{ah}} and have an absolutely continuous quasi-norm.
Then $L^2(\rn)\cap h_X(\rn)$
is dense in $h_X(\rn)$.
\end{corollary}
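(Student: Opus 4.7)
The plan is to prove density by truncating the atomic decomposition provided by Theorem \ref{ah}. Given $f\in h_X(\rn)$, apply Theorem \ref{ah} to obtain local-$(X,\infty,d)$-atoms $\{a_j\}_{j=1}^\infty$ supported in cubes $\{Q_j\}_{j=1}^\infty$ and coefficients $\{\lambda_j\}_{j=1}^\infty\subset[0,\infty)$ such that $f=\sum_{j=1}^\infty\lambda_j a_j$ in $\cs'(\rn)$ and
$$h:=\left[\sum_{j=1}^\infty\left(\frac{\lambda_j}{\|\mathbf{1}_{Q_j}\|_X}\right)^s\mathbf{1}_{Q_j}\right]^{1/s}\in X.$$
Define the partial sums $f_N:=\sum_{j=1}^N\lambda_j a_j$. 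Since every local-$(X,\infty,d)$-atom is a bounded function with compact support, each $f_N$ lies in $L^\infty(\rn)$ with compact support, hence in $L^2(\rn)\cap h_X(\rn)$. It therefore remains to prove $\|f-f_N\|_{h_X(\rn)}\to 0$ as $N\to\infty$.

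Writing $f-f_N=\sum_{j=N+1}^\infty\lambda_j a_j$ in $\cs'(\rn)$ and applying Theorem \ref{ah} to this tail yields
$$\|f-f_N\|_{h_X(\rn)}\lesssim\left\|\left[\sum_{j=N+1}^\infty\left(\frac{\lambda_j}{\|\mathbf{1}_{Q_j}\|_X}\right)^s\mathbf{1}_{Q_j}\right]^{1/s}\right\|_X=:\|h_N\|_X.$$
The sequence $\{h_N\}_{N\in\nn}$ is non-negative, pointwise decreasing, and dominated by $h\in X$; moreover $h_N\to 0$ almost everywhere because the numerical series defining $h^s$ converges almost everywhere, so its tails tend to $0$ there.

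The remaining step is to pass from pointwise almost everywhere convergence with an $X$-dominant to quasi-norm convergence. Fix $\epsilon>0$ and consider the level sets $E_N:=\{x\in\rn:\ h_N(x)>\epsilon\}$, which satisfy $E_N\supset E_{N+1}$ and $\bigcap_{N\in\nn}E_N=\emptyset$. The absolute continuity of the quasi-norm of $X$ yields $\|h\mathbf{1}_{E_N}\|_X\to 0$. On $E_N^\complement$ one has $h_N\le\epsilon\mathbf{1}_{\supp h}$, and since $\mathbf{1}_{\{h>\delta\}}\le\delta^{-1}h\in X$, splitting $\supp h=\{h>\delta\}\cup\{0<h\le\delta\}$ and using absolute continuity once more on $\{0<h\le\delta\}$ (which decreases to $\emptyset$ as $\delta\to 0$) controls the contribution from $E_N^\complement$. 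Combining these via the quasi-triangle inequality and sending first $N\to\infty$, then $\epsilon,\delta\to 0$, gives $\|h_N\|_X\to 0$, so $f_N\to f$ in $h_X(\rn)$.

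The main obstacle is the last step: converting the definition of absolute continuity (phrased for shrinking sets) into a dominated-convergence statement in a merely quasi-Banach function space. The extra care compared with the Banach case is that the quasi-triangle inequality introduces a multiplicative constant, so one must dyadically decompose the level sets of $h$ to handle the possibility that $\mathbf{1}_{\{h>0\}}$ is not a priori in $X$. All other ingredients---the atomic decomposition in $\cs'(\rn)$ and the $L^\infty$-boundedness and compact support of atoms---follow directly from Theorem \ref{ah} and the definition of local-$(X,\infty,d)$-atoms.
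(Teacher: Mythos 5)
Your proof is correct, and it shares the paper's skeleton---truncate an atomic decomposition from Theorem \ref{ah} and show that the tail's $h_X(\rn)$-quasi-norm vanishes---but the way you kill the tail is genuinely different. The paper does not truncate an arbitrary decomposition: it uses the specific one built in the proof of Theorem \ref{ah}, $f=(1-\psi(D))f+\psi(D)f$, and treats the two tails by different means. The tail coming from $(1-\psi(D))f\in H_X(\rn)$ is controlled, via Lemma \ref{lgc}, by its $H_X(\rn)$-norm, after which the known convergence of atomic decompositions in $H_X(\rn)$ (\cite[Remark 3.12]{shyy}) is invoked; the tail coming from $\psi(D)f$ is supported on the pairwise disjoint unit-scale cubes $Q_{2,j}$, so after Lemma \ref{r} the relevant quantity is literally $\|h\mathbf{1}_{E_n}\|_X$ with $E_n$ a decreasing sequence of sets with empty intersection, and the definition of absolute continuity applies verbatim. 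You instead take an arbitrary decomposition, bound $\|f-f_N\|_{h_X(\rn)}\lesssim\|h_N\|_X$ by the sufficiency half of Theorem \ref{ah} applied to the tail, and then prove a dominated-convergence statement for $X$ (pointwise decreasing tails dominated by $h\in X$, plus absolute continuity, imply quasi-norm convergence) via the $\epsilon$--$\delta$ level-set splitting; this extra work is exactly what replaces the disjointness of the $Q_{2,j}$ and the citation of \cite[Remark 3.12]{shyy}, since for a general decomposition the cubes overlap and $h_N$ is not simply $h$ restricted to a shrinking set. Your route is more self-contained and yields the stronger byproduct that every atomic decomposition of an element of $h_X(\rn)$ converges in the $h_X(\rn)$-quasi-norm; the paper's route is shorter given the references it has available. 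Two small points you should make explicit: the almost-everywhere finiteness of $h$ (which follows from $h\in X$ and Definition \ref{bfs}(i)--(ii)) is what guarantees $h_N\to0$ almost everywhere, and the sets $\{h_N>\epsilon\}$ have intersection of measure zero rather than empty, so one should remove that null set (harmless for the quasi-norm) before applying the definition of absolute continuity; both are routine.
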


\begin{proof}
Let $f \in h_X(\rn)$. Choose $\Phi \in \mathcal{S}(\rn)$ satisfying
$\int_{\rn} \Phi(x)\,dx\neq 0$.
By Theorem \ref{ah}, we conclude that
$$f=(1-\psi(D))f+\psi(D)f=\sum_{j=1}^\infty \lambda_{1,j} a_{1,j}
+\sum_{j=1}^\infty\lambda_{2,j}a_{2,j},$$
where all the symbols are the same as in the proof of Theorem \ref{ah}.
For any $n \in \nn$, let
$$f_n:=\sum_{j=1}^n\lambda_{1,j}a_{1,j}+\sum_{j=1}^n\lambda_{2,j}a_{2,j}.$$
It is easy to see that, for any $n\in\nn$, $f_n\in L^2(\rn)\cap h_X(\rn)$.
To finish the proof of  this corollary,  it suffices to show that
\begin{equation}\label{lim}
\lim_{n\to\infty}\lf\|f-f_n\r\|_{h_X(\rn)}=0.
\end{equation}
Indeed, from \eqref{dof}, \eqref{term11} and \eqref{term21}, it follows that
\begin{equation}\label{don}
\|f-f_n\|_{h_X(\rn)}\le\left\|(1-\psi(D))f-\sum_{j=1}^n \lambda_{1,j}
a_{1,j}\right\|_{h_X(\rn)}+\left\|\sum_{j=n+1}^\infty
\lambda_{2,j} a_{2,j}\right\|_{h_X(\rn)}=:\mathrm{J}_1+\mathrm{J}_2.
\end{equation}
By Lemma \ref{lgc}, we find that
\begin{align*}
\mathrm{J}_1&\sim \left\|\sup_{t\in(0,1)}\left|\Phi_t\ast\left[(1-\psi(D))f-
\sum_{j=1}^n \lambda_{1,j} a_{1,j}\right]\right|\right\|_X\\
&\lesssim \left\|\sup_{t\in(0,\infty)}\left|\Phi_t\ast\left[(1-\psi(D))f-
\sum_{j=1}^n \lambda_{1,j} a_{1,j}\right]\right|\right\|_X\\
&\sim \left\|(1-\psi(D))f-\sum_{j=1}^n \lambda_{1,j} a_{1,j}\right\|_{H_X(\rn)},
\end{align*}
which, combined with the definitions of $\lambda_{1,j}$ and $a_{1,j}$
and \cite[Remark 3.12]{shyy}, further implies that
$\mathrm{J}_1\to 0$ as $n\to\infty$.

From Lemma \ref{lgc}, we deduce that
$$\mathrm{J}_2\sim\left\|\sup_{t\in(0,1)}\left|\Phi_t\ast
\left(\sum_{j=n+1}^\infty \lambda_{2,j} a_{2,j}\right)\right|
\right\|_X\lesssim\left\|\sum_{j=n+1}^\infty \lambda_{2,j}\sup_{t\in(0,1)}
|\Phi_t\ast a_{2,j}|\right\|_X.$$
Since, for any $j\in\nn$, $\ell(Q_{2,j})=2$, it follows that,
for any $t\in(0,1)$ and $j\in\nn$,
$$\supp(\Phi_t\ast a_{2,j}) \subset 2Q_{2,j}.$$
Let $q\in(1,\infty)$. Then, for any $t\in(0,1)$, by \cite[Theorem 2.1.10]{gl}
and the fact that $M$ is bounded on $L^q(\rn)$, we find that
\begin{equation}\label{rmf}
\left\|\mathbf{1}_{Q_{2,j}}\sup_{t\in(0,1)}\lf|\Phi_t\ast a_{2,j}\r|\right\|_{L^q(\rn)}
\lesssim\lf\|M(a_{2,j})\r\|_{L^q(\rn)} \lesssim \|a_{2,j}\|_{L^q(\rn)}
\lesssim \frac{|Q_{2,j}|^{1/q}}{\|\mathbf{1}_{Q_{2,j}}\|_X},
\end{equation}
which, together with Lemma \ref{r}, implies that
$$\mathrm{J}_2\lesssim \left\|\left[\sum_{j=n+1}^\infty
\left(\frac{\lambda_{2,j}}{\|\mathbf{1}_{Q_{2,j}}\|_X}\right)^s
\mathbf{1}_{Q_{2,j}}\right]^{1/s}\right\|_X.$$

From this, \eqref{term22} and the fact that $X$ has an absolutely continuous quasi-norm,
we deduce that $\mathrm{J}_2\to 0$
as $n\to\infty$,
which, combined with \eqref{don} and the fact that ${\rm J}_1\to 0$
as $n \to \infty$, further implies that
\eqref{lim} holds true. This finishes the proof of Corollary \ref{ld}.
\end{proof}

To prove Theorem \ref{pdo}, we need the following property of pseudo-differential operators,
which was obtained in \cite[Lemma 6]{g}.

\begin{lemma}\label{kop}
Let $T$ be an $S_{1,0}^0(\rn)$ pseudo-differential operator.
If $\psi\in\mathcal{S}(\rn)$, then $T_tf:=\psi_t\ast Tf$
has a symbol $\sigma_t$ satisfying that, for any $t\in(0,1)$ and $x,\ \xi\in\rn$,
 $$\lf|\partial_x^\beta\partial_\xi^\alpha\sigma_t(x,\xi)\r|\le
 C_{(\alpha, \beta)}(1+|\xi|)^{-|\alpha|},$$
 and a kernel $K_t$ satisfying that, for any $t\in(0,1)$ and $x,\ \xi\in\rn$,
 $$\lf|\partial_x^\beta\partial_\xi^\alpha K_t(x,\xi)\r|
 \le C_{(\alpha, \beta)}|\xi|^{-n-|\alpha|},$$
where $C_{(\alpha,\beta)}$ is a positive constant independent
of $t$, $x$ and $\xi$, but depending on $\alpha$ and $\beta$.
\end{lemma}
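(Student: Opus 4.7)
The strategy is to compute the symbol of $T_t:=\psi_t\ast T$ explicitly, differentiate it term by term, and split the frequency variable according to whether $t|\xi|$ is small or large, using integration by parts in the physical variable to exploit the oscillation in the large-$t|\xi|$ regime. The kernel bound then follows by Fourier-inverting the symbol and integrating by parts in $\xi$ using the symbol estimate just proved.

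First I would compute $\sigma_t$: starting from $(\psi_t\ast Tf)(x) = \int_{\rn}\psi_t(x-y)\int_{\rn}\sigma(y,\eta)e^{iy\cdot\eta}\mathcal{F}f(\eta)\,d\eta\,dy$, interchanging the order of integration and factoring out $e^{ix\cdot\xi}$, then setting $z=tw$, yields
\[
\sigma_t(x,\xi)=\int_{\rn}\psi(w)\,\sigma(x-tw,\xi)\,e^{-itw\cdot\xi}\,dw.
\]
Applying Leibniz to $\partial_x^\beta\partial_\xi^\alpha\sigma_t$ produces a sum indexed by $\alpha_1+\alpha_2=\alpha$ of terms
\[
t^{|\alpha_1|}\int_{\rn}\psi(w)\,w^{\alpha_1}\,(\partial_x^\beta\partial_\xi^{\alpha_2}\sigma)(x-tw,\xi)\,e^{-itw\cdot\xi}\,dw.
\]
When $t(1+|\xi|)\le 2$, the naive bound $|\partial_x^\beta\partial_\xi^{\alpha_2}\sigma|\lesssim(1+|\xi|)^{-|\alpha_2|}$ combined with $t^{|\alpha_1|}\lesssim(1+|\xi|)^{-|\alpha_1|}$ yields $(1+|\xi|)^{-|\alpha|}$. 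When $t(1+|\xi|)>2$, I would use the identity $e^{-itw\cdot\xi}=(1+|t\xi|^2)^{-k}(1-\Delta_w)^k e^{-itw\cdot\xi}$ with $2k\ge|\alpha|$ and transfer $(1-\Delta_w)^k$ onto the rest of the integrand; the Schwartz decay of $\psi$ absorbs any polynomial growth in $w$, while the factors of $t$ picked up whenever a derivative lands on $\sigma(x-tw,\xi)$ balance against the weight $(1+|t\xi|^2)^{-k}\sim(t|\xi|)^{-2k}$ so as to reproduce $(1+|\xi|)^{-|\alpha|}$ uniformly in $t\in(0,1)$.

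For the kernel bound, I would represent $K_t(x,z)=(2\pi)^{-n}\int_{\rn}\sigma_t(x,\xi)e^{iz\cdot\xi}\,d\xi$ as an oscillatory integral so that $T_tf(x)=\int K_t(x,x-y)f(y)\,dy$. For $z\ne 0$, using $z^\gamma e^{iz\cdot\xi}=(-i\partial_\xi)^\gamma e^{iz\cdot\xi}$ and integrating by parts $|\gamma|$ times gives
\[
|z|^{|\gamma|}\bigl|\partial_x^\beta\partial_z^\alpha K_t(x,z)\bigr|\lesssim \int_{\rn}\bigl|\partial_\xi^\gamma\bigl[\xi^\alpha\,\partial_x^\beta\sigma_t(x,\xi)\bigr]\bigr|\,d\xi\lesssim \int_{\rn}(1+|\xi|)^{|\alpha|-|\gamma|}\,d\xi,
\]
which converges once $|\gamma|>n+|\alpha|$. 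Choosing $|\gamma|$ just above this threshold, together with a dyadic decomposition in $\xi$ to optimise, produces $|\partial_x^\beta\partial_z^\alpha K_t(x,z)|\lesssim |z|^{-n-|\alpha|}$ uniformly in $t\in(0,1)$; renaming $z$ as $\xi$ matches the statement.

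The main obstacle lies in the large-$t|\xi|$ regime of the symbol estimate, where three sources of $t$-powers must be reconciled: the prefactor $t^{|\alpha_1|}$, the power of $t$ arising every time a $w$-derivative lands on $\sigma(x-tw,\xi)$, and the negative factor $t^{-2k}$ hidden in $(1+|t\xi|^2)^{-k}$. They must combine with the corresponding powers of $|\xi|$ to yield exactly $(1+|\xi|)^{-|\alpha|}$ uniformly in $t\in(0,1)$; the verification hinges on choosing $2k\ge|\alpha|$ and a careful case analysis of how the Laplacian derivatives distribute among $\psi(w)$, $w^{\alpha_1}$, and $\sigma(x-tw,\xi)$.
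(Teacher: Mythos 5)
The paper does not actually prove Lemma \ref{kop}: it appeals to \cite[Lemma~6]{g} (Goldberg's 1979 Duke paper) and gives no argument. Your proposal therefore supplies a genuine proof where the paper supplies none, and the relevant benchmark is the standard pseudo-differential calculus, which is essentially what you carry out. The derivation of
\[
\sigma_t(x,\xi)=\int_{\rn}\psi(w)\,\sigma(x-tw,\xi)\,e^{-itw\cdot\xi}\,dw
\]
is correct, the Leibniz expansion is correct, and the dichotomy between $t(1+|\xi|)\le 2$ and $t(1+|\xi|)>2$ together with the non-stationary-phase identity $e^{-itw\cdot\xi}=(1+t^2|\xi|^2)^{-k}(1-\Delta_w)^k e^{-itw\cdot\xi}$ is the right mechanism for the uniform-in-$t$ bound. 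The bookkeeping closes because every $w$-derivative that falls on $\sigma(x-tw,\xi)$ costs only a harmless factor $t<1$, while in the regime $t(1+|\xi|)>2$ one has $1/t<|\xi|$, so the residual factor $t^{|\alpha_1|-2k}$ (with $2k\ge|\alpha|\ge|\alpha_1|$) is bounded by $|\xi|^{2k-|\alpha_1|}$ and cancels the weight $(t|\xi|)^{-2k}$ down to $(1+|\xi|)^{-|\alpha_1|}$, which combined with $(1+|\xi|)^{-|\alpha_2|}$ gives the claimed bound.

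The only place you are too brisk is the kernel estimate. From a single integration-by-parts count with $|\gamma|=n+|\alpha|+1$ you would obtain $|\partial_x^\beta\partial_z^\alpha K_t(x,z)|\lesssim|z|^{-n-|\alpha|-1}$, not the sharper $|z|^{-n-|\alpha|}$ demanded by the lemma; the correct power really does require the dyadic decomposition of $\sigma_t$ in the frequency variable that you only mention. Concretely, splitting $\sigma_t=\sum_{j\ge0}\sigma_t\chi(2^{-j}\cdot)$ and combining the trivial bound $\lesssim 2^{j(n+|\alpha|)}$ (for $2^j\le 1/|z|$) with the integrated-by-parts gain $\lesssim 2^{j(n+|\alpha|)}(2^j|z|)^{-N}$, $N>n+|\alpha|$ (for $2^j>1/|z|$), sums to $|z|^{-n-|\alpha|}$. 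Since you explicitly flag this optimisation, the gap is minor and the proposal should be regarded as correct, though the dyadic step deserved to be spelled out rather than waved at.
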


We now show Theorem \ref{pdo} by using Lemma \ref{kop}.

\begin{proof}[Proof of Theorem \ref{pdo}]
Let $f\in L^2(\rn)\cap h_X(\rn)$ and $d$, $s$ be as in Theorem \ref{ah}. Then,
by Theorem \ref{ah} and Remark \ref{l2}, we know that there
exist a sequence $\{a_j\}_{j=1}^\infty$ of local-$(X,\infty,d)$-atoms supported, respectively,
in cubes $\{Q_j\}_{j=1}^\infty$ and
a sequence $\{\lambda_j\}_{j=1}^\infty$ of non-negative numbers such that
\begin{equation}\label{lad2}
f=\sum_{j=1}^\infty \lambda_j a_j \quad\quad \text{in}\quad L^2(\rn)
\end{equation}
and
\begin{equation*}
\left\|\left[\sum_{j=1}^\infty\left(\frac{\lambda_j}{\|\mathbf{1}_{Q_j}\|_X}\right)^s
\mathbf{1}_{Q_j}\right]^{1/s}\right\|_X\lesssim\|f\|_{h_X(\rn)}.
\end{equation*}
Since the operator $T$ is bounded on $L^2(\rn)$, it follows that
$$Tf = \sum_{j=1}^\infty\lambda_jTa_j$$
holds true in $L^2(\rn)$. Choose $\Phi\in\mathcal{S}(\rn)$
satisfying that $\supp(\Phi)\subset B({\vec 0_n},1)$ and
$\int_{\rn} \Phi(x)\,dx\neq 0$.  Then, by Lemma \ref{lgc},
Corollary \ref{ld} and a dense argument, to prove this theorem, we only need to show that
\begin{equation}\label{mf}
\left\|\sup_{t\in(0,1)}|\Phi_t\ast Tf|\right\|_X\lesssim \|f\|_{h_X(\rn)}.
\end{equation}
Indeed, from \eqref{lad2}, we deduce that
\begin{align*}
\left\|\sup_{t\in(0,1)}|\Phi_t\ast Tf|\right\|_X &=\left\|\sup_{t\in(0,1)}\left|\Phi_t\ast
\left(\sum_{j=1}^\infty\lambda_jTa_j\right)\right|\right\|_X
\lesssim \left\|\sum_{j=1}^\infty \lambda_j\sup_{t\in(0,1)}|\Phi_t\ast Ta_j|\right\|_X\\
&\lesssim  \left\|\sum_{j=1}^\infty \lambda_j\mathbf{1}_{4Q_j}
\sup_{t\in(0,1)}|\Phi_t\ast Ta_j|\right\|_X +  \left\|\sum_{j=1}^\infty
\lambda_j\mathbf{1}_{(4Q_j)^\complement}\sup_{t\in(0,1)}|\Phi_t\ast Ta_j|\right\|_X\\
&=:\mathrm{J}_1+\mathrm{J}_2.
\end{align*}
Similarly to \eqref{rmf}, we obtain, for any $j\in\nn$,
$$\left\|\mathbf{1}_{4Q_j}\sup_{t\in(0,1)}|\Phi_t\ast a_j|\right\|_{L^q(\rn)}
\lesssim \frac{|Q_j|^{1/q}}{\|\mathbf{1}_{Q_j}\|_X},$$
which, together with Lemma \ref{r} and \eqref{lne}, implies that
\begin{equation}\label{j1}
\mathrm{J}_1\lesssim \left\|\left[\sum_{j=1}^\infty
\left(\frac{\lambda_j}{\|\mathbf{1}_{Q_j}\|_X}\right)^s
\mathbf{1}_{Q_j}\right]^{1/s}\right\|_X\lesssim \|f\|_{h_X(\rn)}.
\end{equation}

Moreover, if $a_j$ has the vanishing moments, namely,
$\int_{\rn}a(x)x^\alpha\,dx=0$ for any multi-index $\alpha \in \zn_+$
with $|\alpha|\le d$, by Lemma \ref{kop}, the Taylor expansion,
the fact that $d\geq d_X \geq n(1/\theta-1)-1$ and an argument
similar to that used in \cite[p.\,76]{yy},
we conclude that, for any $x\in(4Q_j)^\complement$,
\begin{align}\label{vc}
\sup_{t\in(0,1)}|\Phi_t\ast Ta_j(x)|&\lesssim |x-x_j|^{-(n+d+1)}
|Q_j|^{\frac{d+1}{n}}\|a_j\|_{L^1(\rn)}\notag\\
&\lesssim \frac{1}{\|\mathbf{1}_{Q_j}\|_X}\lf[M(\mathbf{1}_{Q_j})
(x)\r]^{\frac{n+d+1}{n}}\lesssim \frac{1}{\|\mathbf{1}_{Q_j}\|_X}
M^{(\theta)}(\mathbf{1}_{Q_j})(x),
\end{align}
where $x_j$ denotes the center of $Q_j$. If $a_j$ has no the vanishing moment,
then, from the proof of Theorem \ref{ah}, we deduce that, for any such $j$, $|Q_j|=2^n >1$;
in this case, similarly to the proof of \cite[(8.44)]{yy}, we know that, for any $N\in\nn$ and $x\in(4Q_j)^\complement$,
$$\sup_{t\in(0,1)}\lf|\Phi_t\ast Ta_j(x)\r|\lesssim|x-x_j|^{-N}\|a_j\|_{L^1(\rn)},$$
where the implicit positive constant may depend on $N$. Choosing $N >n+d+1$, we then
have, for any $x\in\rn$,
\begin{equation}\label{nvc}
\sup_{t\in(0,1)}\lf|\Phi_t\ast Ta_j(x)\r|\lesssim|x-x_j|^{-(n+d+1)}
|Q_j|^{\frac{d+1}{n}}\|a_j\|_{L^1(\rn)}
\lesssim \frac{1}{\|\mathbf{1}_{Q_j}\|_X}M^{(\theta)}(\mathbf{1}_{Q_j})(x),
\end{equation}
which, together with \eqref{vc}, \eqref{nvc}, \eqref{ma} and \eqref{lne}, implies that
$$\mathrm{J}_2\lesssim \left\|\sum_{j=1}^\infty\frac{\lambda_j}
{\|\mathbf{1}_{Q_j}\|_X}M^{(\theta)}
(\mathbf{1}_{Q_j})\right\|_X\lesssim\left\|\left[\sum_{j=1}^\infty
\left(\frac{\lambda_j}{\|\mathbf{1}_{Q_j}\|_X}\right)^s
\mathbf{1}_{Q_j}\right]^{1/s}\right\|_X\lesssim \|f\|_{h_X(\rn)}.$$
From this and \eqref{j1}, it follows that \eqref{pdob} holds true
for any $f\in L^2(\rn)\cap h_X(\rn)$,
which, combined  with Corollary \ref{ld} and a dense argument,
further completes the proof of Theorem \ref{pdo}.
\end{proof}

\begin{remark}
Recently,  Abl\'e and Feuto \cite{af} obtained the boundedness of
pseudo-differential operators
on Hardy-amalgam spaces $\mathcal{H}^{(p,q)}_\loc(\rn)$. Compared with the results in \cite{af}
and the results for the classical local Hardy space $h^p(\rn)$,
the difficulty to show the above results is that, to
estimate the $h_X(\rn)$-norm of $Tf$, we cannot  just only show that,
for any local-$(X, \infty, d)$-atom $a$, $\|Ta\|_{h_X(\rn)}\lesssim 1$
and we have to estimate $\|Tf\|_{h_X(\rn)}$ directly.
This problem is caused by Assumption \ref{a} and we do not
have a concrete form of the $h_X(\rn)$-norm.
\end{remark}

Based on Theorem \ref{pdo}, we can weaken the assumption
that $X$ has an absolutely continuous quasi-norm into a weaker assumption
which is applicable to Morrey spaces.

\begin{theorem}\label{pdo-new}
Assume that $X$ and  $Y$ are two ball quasi-Banach function spaces satisfying both
\eqref{ma} and Assumption \ref{a2} with the same $s\in (0,1]$.
Moreover, assume that $Y$ has an absolutely continuous quasi-norm
and $X$ continuously embeds into $Y$.
Assume that $T$ is an $S_{1,0}^0(\rn)$ pseudo-differential operator.
Then there exists a positive constant $C$ such that, for any $f\in h_X(\rn)$,
\begin{equation*}
\|T(f)\|_{h_X(\rn)}\le C\|f\|_{h_X(\rn)}.
\end{equation*}
\end{theorem}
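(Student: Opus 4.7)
The plan is to mimic the strategy used in the proof of Theorem \ref{ccz-new}, replacing the atomic characterization of $H_X(\rn)$ by the local atomic characterization of $h_X(\rn)$ provided by Theorem \ref{ah}, and using $h_Y(\rn)$ as an auxiliary intermediate space in order to secure the series identity that defines $Tf$.

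First, given $f\in h_X(\rn)$ and a fixed integer $d\in[\max\{d_X,d_Y\},\infty)\cap\zz_+$, I would invoke Theorem \ref{ah} to produce a decomposition
\[
f=\sum_{j=1}^\infty\lambda_j a_j\quad\text{in}\quad\mathcal{S}'(\rn),
\]
with each $a_j$ a local-$(X,\infty,d)$-atom supported in a cube $Q_j$ and with $\|\{\sum_j(\lambda_j/\|\mathbf{1}_{Q_j}\|_X)^s\mathbf{1}_{Q_j}\}^{1/s}\|_X\lesssim\|f\|_{h_X(\rn)}$. Exactly as in \eqref{new-1}, the embedding $X\hookrightarrow Y$ implies that $\widetilde{a}_j:=(\|\mathbf{1}_{Q_j}\|_X/\|\mathbf{1}_{Q_j}\|_Y)a_j$ is a local-$(Y,\infty,d)$-atom and the rescaled coefficients $\widetilde{\lambda}_j:=\lambda_j\|\mathbf{1}_{Q_j}\|_Y/\|\mathbf{1}_{Q_j}\|_X$ satisfy the analogous $Y$-side estimate controlled by $\|f\|_{h_X(\rn)}$. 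By Theorem \ref{ah} applied to $Y$, the series $\sum_j\widetilde{\lambda}_j\widetilde{a}_j$ also converges in $h_Y(\rn)$ to $f$, so $f\in h_Y(\rn)$ with $\|f\|_{h_Y(\rn)}\lesssim\|f\|_{h_X(\rn)}$.

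Next, since $Y$ has an absolutely continuous quasi-norm, Theorem \ref{pdo} applies to $Y$: the operator $T$ is bounded on $h_Y(\rn)$, and along the atomic decomposition one has $Tf=\sum_{j=1}^\infty\lambda_j T(a_j)$ in $h_Y(\rn)$, hence in $\mathcal{S}'(\rn)$. With this distributional identity now in hand, I would choose $\Phi\in\mathcal{S}(\rn)$ with $\supp\Phi\subset B(\vec{0}_n,1)$ and $\int_{\rn}\Phi(x)\,dx\neq 0$, appeal to Lemma \ref{lgc} to reduce the desired inequality to bounding $\|\sup_{t\in(0,1)}|\Phi_t\ast Tf|\|_X$, and then split the sum into a local piece on $\bigcup_j 4Q_j$ and a far-field piece on the complements $\bigcup_j(4Q_j)^\complement$ exactly as in the proof of Theorem \ref{pdo}. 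The local piece is handled by the $L^q$-boundedness of $M$ together with Lemma \ref{r}; the far-field piece is controlled by Lemma \ref{kop}, a Taylor expansion (using the vanishing moments of $a_j$ when $|Q_j|<1$, and the rapid kernel decay together with $|Q_j|\ge 1$ in the remaining case) and the pointwise domination by $\|\mathbf{1}_{Q_j}\|_X^{-1}M^{(\theta)}(\mathbf{1}_{Q_j})$, followed by \eqref{ma}.

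The main obstacle, and in fact the only genuine difference from the proof of Theorem \ref{pdo}, is the unavailability of the $L^2$-density argument: without absolute continuity of the quasi-norm on $X$ one cannot invoke Corollary \ref{ld}, and so the previous pathway of first establishing the estimate on $L^2(\rn)\cap h_X(\rn)$ and then extending by density is blocked. The conceptual key is to insert $h_Y(\rn)$ as a bridge: the boundedness of $T$ on $h_Y(\rn)$, coupled with the embedding $X\hookrightarrow Y$, delivers the identity $Tf=\sum_j\lambda_jT(a_j)$ in $\mathcal{S}'(\rn)$ directly, after which every remaining estimate is purely pointwise or atomic and quantifies entirely in the $X$-data of $f$.
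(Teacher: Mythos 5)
Your proposal is correct and follows essentially the route the paper intends: the paper proves Theorem \ref{pdo-new} by combining Theorem \ref{pdo} with the bridging device of Theorem \ref{ccz-new}, i.e., rescale the local-$(X,\infty,d)$-atoms from Theorem \ref{ah} into local-$(Y,\infty,d)$-atoms via the embedding $X\hookrightarrow Y$, use the boundedness of $T$ on $h_Y(\rn)$ to obtain $Tf=\sum_{j}\lambda_jT(a_j)$ in $\mathcal{S}'(\rn)$, and then repeat the local/far-field maximal-function estimates in $X$. The only detail worth spelling out is that the convergence of the atomic series to $f$ in $h_Y(\rn)$ (not merely membership with norm control) uses the absolutely continuous quasi-norm of $Y$ applied to the tails, as in the proof of Corollary \ref{ld}, rather than Theorem \ref{ah} alone.
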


\begin{proof}
By Theorem \ref{pdo} and an argument similar to that used in the proof of Theorem \ref{ccz-new},
we can obtain the desired conclusion of this theorem and we omit the details.
This finishes the proof of Theorem \ref{pdo-new}.
\end{proof}

In what follows, we denote the classical \emph{local Hardy space},
the \emph{local weighted Hardy space}, the \emph{local Herz--Hardy space},
the \emph{local Lorentz--Hardy space}, the \emph{local Morrey--Hardy space},
the \emph{local variable Hardy space},
the \emph{local Orlicz--Hardy space} and the \emph{local Orlicz-slice Hardy space}, respectively, by
$$h^p(\rn),\quad h^p_w(\rn),\quad h\ck^\alpha_{p,q}(\rn),\quad h^{p,q}(\rn),\quad
h\cm^p_q(\rn),\quad h^{p(\cdot)}(\rn),\quad h^\Phi(\rn)$$
and $(hE_\Phi^r)_t(\rn)$.

As the corollaries of Theorem \ref{pdo-new} and Remarks \ref{r2.1}, \ref{r2.2} and \ref{morr-lw},
we have the following conclusions.

\begin{corollary}\label{c4.1}
Assume that $T$ is an $S_{1,0}^0(\rn)$ pseudo-differential operator.
\begin{enumerate}
\item[{\rm(a)}] For any $p\in(0,\infty)$,
there exists a positive constant $C$ such that, for any $f\in h^p(\rn)$,
$\|T(f)\|_{h^p(\rn)}\leq C\|f\|_{h^p(\rn)}.$

\item[{\rm(b)}] For any  $p\in(0,1]$ and $w\in A_\fz(\rn)$,
there exists a positive constant $C$ such that,
for any $f\in h^p_w(\rn)$,
$\|T(f)\|_{h^p_w(\rn)}\leq C\|f\|_{h^p_w(\rn)}.$

\item[{\rm(c)}] For any  $p,\ r\in(0,\infty)$ and $\alpha\in(-n/p,\infty)$,
there exists a positive constant $C$ such that,
for any $f\in h\ck^\alpha_{p,r}(\rn)$,
$\|T(f)\|_{h\ck^\alpha_{p,r}(\rn)}\leq C\|f\|_{h\ck^\alpha_{p,r}(\rn)}.$

\item[{\rm(d)}] For any $p,\ r\in(0,\infty)$,
there exists a positive constant $C$ such that,
for any $f\in h^{p,r}(\rn)$,
$\|T(f)\|_{h^{p,r}(\rn)}\leq C\|f\|_{h^{p,r}(\rn)}.$

\item[{\rm(e)}] For any  $p\in(0,\infty)$ and $r\in(0,p]$,
there exists a positive constant $C$ such that,
for any $f\in h{\mathcal M}^p_r(\rn)$,
$\|T(f)\|_{h{\mathcal M}^p_r(\rn)}\leq C\|f\|_{h{\mathcal M}^p_r(\rn)}.$

\item[{\rm(f)}] If $p(\cdot)$ is globally log-H\"older continuous,
then there exists a positive constant $C$ such that,
for any $f\in h^{p(\cdot)}(\rn)$,
$\|T(f)\|_{h^{p(\cdot)}(\rn)}\leq C\|f\|_{h^{p(\cdot)}(\rn)}.$

\item[{\rm(g)}] Assume that $\Phi$ is an Orlicz function with lower type
$p_\Phi^- \in (0,1)$ and upper type $p_\Phi^+ =1$.
Then there exists a positive constant $C$ such that,
for any $f\in h^\Phi(\rn)$,
$\|T(f)\|_{h^\Phi(\rn)}\leq C\|f\|_{h^\Phi(\rn)}.$

\item[\rm(h)] Let $r\in(0,\fz)$ and $\Phi$ be an Orlicz function
with $p_\Phi^-,\ p_\Phi^+\in(0,\fz)$.
Then there exists a positive constant $C$ such that,
for any $t\in(0,\fz)$ and $f\in (hE_\Phi^r)_t(\rn)$,
$\|T(f)\|_{(hE_\Phi^r)_t(\rn)}\leq C\|f\|_{(hE_\Phi^r)_t(\rn)}.$
\end{enumerate}
\end{corollary}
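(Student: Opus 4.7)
The plan is to derive each of the eight boundedness statements by directly invoking Theorem \ref{pdo-new} with a suitable choice of the auxiliary space $Y$. Thus the whole task reduces to verifying, for each concrete space $X$ in (a)--(h), the following three ingredients: (1) $X$ is a ball quasi-Banach function space satisfying \eqref{ma} and Assumption \ref{a2} with a common $s\in(0,1]$; (2) there exists a ball quasi-Banach function space $Y$ with absolutely continuous quasi-norm, also satisfying \eqref{ma} and Assumption \ref{a2} with the same $s$; and (3) $X$ continuously embeds into $Y$.

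For the spaces in (a), (b), (c), (d), (f), (g) and (h), I would simply take $Y:=X$. Indeed, Remark \ref{r2.1}(a)--(b),(c),(d),(g),(h) together with Remark \ref{r2.2}(a)--(b),(c),(d),(f),(g),(h) verify ingredient (1) (one can always choose $s\in(0,1]$ small enough so that \eqref{ma}, \eqref{ma2} and \eqref{ma21} hold simultaneously with the parameters listed there), while the discussion in the second remark block of Section \ref{s4} (beginning ``We point out that, except the Morrey space $\mathcal{M}^p_q(\rn)$, the other examples $\ldots$ all have absolutely continuous quasi-norms'') supplies ingredient (2). Ingredient (3) is trivial when $Y=X$.

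The remaining case, (e), is the one that genuinely needs the strengthened Theorem \ref{pdo-new}, because $\mathcal{M}^p_q(\rn)$ does not in general have absolutely continuous quasi-norm. Here I would follow Remark \ref{morr-lw} verbatim: set $X:=\mathcal{M}^p_q(\rn)$ with $0<q\le p<\infty$ and
\begin{equation*}
Y:=L^q_w(\rn),\qquad w:=\bigl[M(\mathbf{1}_{B(\vec{0}_n,1)})\bigr]^{\widetilde{\theta}},\qquad \widetilde{\theta}\in\Bigl(1-\tfrac{q}{p},\,1\Bigr).
\end{equation*}
By \cite[Theorem 7.2.7]{gl}, $w\in A_1(\rn)\subset A_\infty(\rn)$, so Remarks \ref{r2.1}(b) and \ref{r2.2}(b) yield (1) and the weighted-Lebesgue part of (2) for $Y$, while the Lebesgue dominated convergence argument recorded in the second remark block of Section \ref{s4} gives the absolutely continuous quasi-norm of $Y$. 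Ingredient (3), the continuous embedding $\mathcal{M}^p_q(\rn)\hookrightarrow L^q_w(\rn)$, is precisely the estimate \eqref{lw_m} established in Remark \ref{morr-lw}.

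With these verifications in hand, Theorem \ref{pdo-new} applies and produces the desired bound $\|T(f)\|_{h_X(\rn)}\le C\|f\|_{h_X(\rn)}$ in each case (a)--(h). The main (and only non-mechanical) obstacle is case (e): choosing an appropriate $A_1$-weight and checking the continuous embedding $\mathcal{M}^p_q(\rn)\hookrightarrow L^q_w(\rn)$; all of this has however already been carried out in Remark \ref{morr-lw}, so the proof of the corollary itself is essentially a bookkeeping exercise of applying Theorem \ref{pdo-new} eight times.
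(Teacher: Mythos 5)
Your proposal matches the paper's intended derivation exactly: the paper introduces Corollary \ref{c4.1} with the single sentence ``As the corollaries of Theorem \ref{pdo-new} and Remarks \ref{r2.1}, \ref{r2.2} and \ref{morr-lw}, we have the following conclusions,'' and Remark \ref{morr-lw} already prescribes $Y:=X$ for cases (a)--(d), (f)--(h) and $Y:=L^q_w(\rn)$ with $w:=[M(\mathbf{1}_{B(\vec{0}_n,1)})]^{\widetilde{\theta}}$ for the Morrey case (e). Your bookkeeping is correct; the only small omission is that in case (e) you verify ingredients (2)--(3) for $Y$ but do not cite Remarks \ref{r2.1}(e) and \ref{r2.2}(e) to confirm that $X=\mathcal{M}^p_q(\rn)$ itself satisfies \eqref{ma} and Assumption \ref{a2} with an $s$ compatible with that of $Y$, though this is as routine as the other cases.
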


\begin{remark}\label{r4.2} Corollary \ref{c4.1}(a) is just \cite[Theorem 4]{g}.
Moreover, Corollary \ref{c4.1}(b) was obtained in \cite[Theorem 1]{m91}.
Furthermore, Corollary \ref{c4.1}(c) was established in \cite[Theorems 2.4 and 2.5]{fy00}.
Corollary \ref{c4.1}(f) was obtained in \cite[Theorem 1.2]{ks13}.
Moreover, Corollary \ref{c4.1}(g)
was established in \cite[Theorem 8.18]{yy}.
In the case of Morrey spaces, Corollary \ref{c4.1}(e)  is a consequence of \cite[Theorem 1.1 and Proposition 1.5]{s09}.
Furthermore, it is worth pointing out that (d) and (h) of Corollary \ref{c4.1} are new.
\end{remark}

\section{Characterizations of $h_X(\rn)$ via molecules and Littlewood--Paley functions
}\label{s5}

In this section, we establish a molecular characterization
and Littlewood--Paley characterizations of the local Hardy space $h_X(\rn)$.
We begin with the notions of local molecules.

\begin{definition}
Let $X$ be a ball quasi-Banach function space,
$q\in[1,\infty]$, $d\in\zz_+$ and $\tau\in(0,\infty)$.
A measurable function $m$ on $\rn$ is called a \emph{local-$(X,q,d,\tau)$-molecule}
centered at a cube $Q\subset\rn$ if
\begin{enumerate}
\item[{\rm(i)}] $\|m\mathbf{1}_Q\|_{L^q(\rn)}\le|Q|^{1/q}\|\mathbf{1}_Q\|_X^{-1}$;
\item[{\rm(ii)}] for any $j\in\nn$,
$$\lf\|m\mathbf{1}_{S_j(Q)}\r\|_{L^q(\rn)}\le2^{-\tau j}
|Q|^{1/q}\|\mathbf{1}_Q\|_X^{-1},$$
where $S_j(Q):=(2^{j}Q)\backslash(2^{j-1}Q)$;
\item[{\rm(iii)}] if $|Q|<1$, then, for any multi-index $\alpha\in\zz^n_+$
with $|\alpha|\le d$, $\int_{\rn}m(x)x^\alpha\,dx=0$.
\end{enumerate}
\end{definition}

Using the atomic characterization of $h_X(\rn)$, we now obtain
the following local-$(X,q,d,\tau)$-molecular characterization of $h_X(\rn)$.

\begin{theorem}\label{mech}
Let $X$ be a ball quasi-Banach function space
satisfying both \eqref{ma} with $0<\theta<s\le 1$ and Assumption \ref{a2}
with some $q\in(1,\fz]$ and the same $s$ as in \eqref{ma}.
Assume that $d\in\zz_+$ with $d\ge d_X$ and $\tau\in(0,\infty)$ satisfying
$\tau > n(1/\theta-1/q)$, where $d_X$ is as in \eqref{dx}.
Then $f\in h_X(\rn)$ if and only if there exist a sequence $\{m_j\}_{j=1}^\infty$
of local-$(X,q,d,\tau)$-molecules centered,  respectively, at cubes $\{Q_j\}_{j=1}^\infty$ and a sequence
$\{\lambda_j\}_{j=1}^\infty$ of non-negative numbers such that
\begin{equation}\label{lmd}
f=\sum_{j=1}^\infty \lambda_j m_j \quad\quad \text{in}\quad \mathcal{S}'(\rn)
\end{equation}
and
\begin{equation*}
\left\|\left[\sum_{j=1}^\infty\left(\frac{\lambda_j}{\|\mathbf{1}_{Q_j}\|_X}\right)^s
\mathbf{1}_{Q_j}\right]^{1/s}\right\|_X <\infty.
\end{equation*}
Moreover,
$$\|f\|_{h_X(\rn)}\sim\inf\lf\{\left\|\left[\sum_{j=1}^\infty
\left(\frac{\lambda_j}{\|\mathbf{1}_{Q_j}\|_X}\right)^s
\mathbf{1}_{Q_j}\right]^{1/s}\right\|_X\r\},$$
where the infimum is taken over all the decompositions of $f$ as in \eqref{lmd}
and the positive equivalence constants are independent of $f$ but may depend on $n$ and $s$.
\end{theorem}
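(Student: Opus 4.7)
My plan is to split the statement into its two directions and reduce each to a result already available in the paper. The necessity direction is essentially free: given $f\in h_X(\rn)$, Theorem \ref{ah} will produce an atomic decomposition $f=\sum_j\lambda_j a_j$ with local-$(X,\infty,d)$-atoms supported in cubes $Q_j$, and any such atom is automatically a local-$(X,q,d,\tau)$-molecule centered at $Q_j$: the annular bound is trivial because $a_j\mathbf{1}_{S_k(Q_j)}\equiv 0$ for every $k\in\nn$, while the size and moment conditions follow from the atomic normalization.

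The real work is sufficiency. I plan to split the molecular index set according to whether $|Q_j|\ge 1$ or $|Q_j|<1$. For the \emph{large-cube part} I will use the atomic characterization directly. Writing $m_j=\sum_{k=0}^\infty m_j\mathbf{1}_{S_k(Q_j)}$ with $S_0(Q_j):=Q_j$, each summand is supported in $2^kQ_j$, a cube of measure at least $1$, so the vanishing-moment requirement in the definition of a local-$(X,q,d)$-atom is vacuous. A direct computation will give $m_j\mathbf{1}_{S_k(Q_j)}=\mu_{j,k}a_{j,k}$ with $a_{j,k}$ a local-$(X,q,d)$-atom on $2^kQ_j$ and
\begin{equation*}
\mu_{j,k}\le 2^{-k(\tau+n/q)}\,\frac{\|\mathbf{1}_{2^kQ_j}\|_X}{\|\mathbf{1}_{Q_j}\|_X}.
\end{equation*}
Combining this with the pointwise bound $\mathbf{1}_{2^kQ_j}\lesssim 2^{kn/\theta}M^{(\theta)}(\mathbf{1}_{Q_j})$, the hypothesis $\tau>n(1/\theta-1/q)$ will render the $k$-sum geometric in $X$; the Fefferman--Stein inequality \eqref{ma} applied to the sequence $\{\lambda_j\mathbf{1}_{Q_j}/\|\mathbf{1}_{Q_j}\|_X\}_j$, followed by Theorem \ref{ah}, would then close this case. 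For the \emph{small-cube part} the situation is different: when $|Q_j|<1$, the molecule $m_j$ carries vanishing moments up to order $d\ge d_X$ and the annular bound decays at rate $\tau$, so $m_j$ qualifies as a global $(X,q,d,\tau)$-molecule for $H_X(\rn)$. I will invoke the molecular characterization of $H_X(\rn)$ (see \cite{shyy}) to control the sub-sum in $H_X(\rn)$, and then transfer the bound to $h_X(\rn)$ via the elementary pointwise estimate $m_b^{\ast\ast}(\cdot,\Phi)\le M_b^{\ast\ast}(\cdot,\Phi)$, which supplies the continuous embedding $H_X(\rn)\hookrightarrow h_X(\rn)$.

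The step I expect to be the most delicate is the $k$-summation in the large-cube reduction. After re-indexing, the atoms $\{a_{j,k}\}$ live on the enlarged cubes $\{2^kQ_j\}$, and the target coefficient functional becomes the doubly-indexed expression $\|[\sum_{j,k}(\lambda_j\mu_{j,k}/\|\mathbf{1}_{2^kQ_j}\|_X)^s\mathbf{1}_{2^kQ_j}]^{1/s}\|_X$. Dominating this by the single-indexed functional in the statement requires absorbing both the ratio $\|\mathbf{1}_{2^kQ_j}\|_X/\|\mathbf{1}_{Q_j}\|_X$ and the shift from $\mathbf{1}_{2^kQ_j}$ back to $\mathbf{1}_{Q_j}$ through the $\theta$-maximal inequality; this is exactly where the sharpness of the threshold $\tau>n(1/\theta-1/q)$ manifests itself and where Assumption \ref{a} plays its essential role.
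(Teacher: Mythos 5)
Your proposal is correct and follows essentially the same route as the paper: necessity via Theorem \ref{ah} plus the observation that local atoms are local molecules, and sufficiency by splitting into small cubes (treated as global $(X,q,d,\tau)$-molecules for $H_X(\rn)$ via \cite[Theorem 3.9]{shyy} and the trivial bound $\|\cdot\|_{h_X(\rn)}\le\|\cdot\|_{H_X(\rn)}$) and large cubes (annular decomposition into moment-free local-$(X,q,d)$-atoms on $2^kQ_j$, with the geometric $k$-sum from $\tau>n(1/\theta-1/q)$, the pointwise bound $\mathbf{1}_{2^kQ_j}\lesssim 2^{kn/\theta}M^{(\theta)}(\mathbf{1}_{Q_j})$, \eqref{ma} and Theorem \ref{ah}). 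This is precisely the paper's argument, including the identification of where the threshold on $\tau$ enters.
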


\begin{proof}
The necessity part of Theorem \ref{mech} is obtained by Theorem \ref{ah}
and the fact that a local-$(X,q,d)$-atom is also a local-$(X,q,d,\tau)$-molecule
for any $q\in(1,\fz]$, $d\in\zz_+$ and $\tau\in(0,\infty)$; the details are omitted here.

Now we turn to the sufficiency part of Theorem \ref{mech}.
Let $f=\sum_{i=1}^2\sum_{j=1}^\infty \lambda_{i,j} m_{i,j}$ in $\cs'(\rn)$,
where $\{m_{1,j}\}_{j=1}^\infty\cup\{m_{2,j}\}_{j=1}^\infty$ is a
sequence of local-$(X,q,d,\tau)$-molecules  centered, respectively,
at cubes $\{Q_{1,j}\}_{j=1}^\infty\cup\{Q_{2,j}\}_{j=1}^\infty$,
and $\{\lambda_{1,j}\}_{j=1}^\infty\cup\{\lambda_{2,j}\}_{j=1}^\infty$
is a sequence of non-negative numbers satisfying that
\begin{equation*}
\left\|\left[\sum_{i=1}^2\sum_{j=1}^\infty\left(\frac{\lambda_{i,j}}
{\|\mathbf{1}_{Q_{i,j}}\|_X}\right)^s
\mathbf{1}_{Q_{i,j}}\right]^{1/s}\right\|_X<\infty.
\end{equation*}
Here, for any $j\in\nn$, $\ell(Q_{1,j})<1$ and $\ell(Q_{2,j})\geq1$
with $\ell(Q)$ denoting the side length of the cube $Q$.

By the observation that, for any $j\in\nn$, $m_{1,j}$ is also an
$(X,q,d,\tau)$-molecule, and the molecular characterization of $H_X(\rn)$
(see \cite[Theorem 3.9]{shyy}), we find that
\begin{align}\label{10.29.1}
\left\|\sum_{j=1}^\infty\lambda_{1,j}m_{1,j}\right\|_{h_X(\rn)}
&\le\left\|\sum_{j=1}^\infty\lambda_{1,j}m_{1,j}\right\|_{H_X(\rn)}\notag\\
&\ls \left\|\left[\sum_{j=1}^\infty\left(\frac{\lambda_{1,j}}
{\|\mathbf{1}_{Q_{1,j}}\|_X}\right)^s
\mathbf{1}_{Q_{1,j}}\right]^{1/s}\right\|_X
<\infty.
\end{align}
Moreover, for any $j\in\nn$, let $m_{2,j}^{(0)}:=m_{2,j}\mathbf{1}_{Q_{2,j}}$
and, for any $k\in\nn$,
$$m_{2,j}^{(k)}:=2^{k(\tau+n/q)}\frac{\|\mathbf{1}_{Q_{2,j}}\|_X}
{\|\mathbf{1}_{2^kQ_{2,j}}\|_X}m_{2,j}\mathbf{1}_{S_k(Q_{2,j})}.$$
Then it is easy to show that, for any $j\in\nn$ and $k\in\zz_+$, $m_{2,j}^{(k)}$ is a
local-$(X,q,d)$-atom supported in $2^kQ_{2,j}$, and
$$\sum_{j=1}^\infty\lambda_{2,j}m_{2,j}=\sum_{j=1}^\infty\sum_{k=0}^\infty
\lambda_{2,j}2^{-k(\tau+n/q)}\frac{\|\mathbf{1}_{2^kQ_{2,j}}\|_X}
{\|\mathbf{1}_{Q_{2,j}}\|_X}m_{2,j}^{(k)},$$
which, combined with Theorem \ref{ah} and $\tau > n(1/\theta-1/q)$, further implies that
\begin{align*}
\left\|\sum_{j=1}^\infty\lambda_{2,j}m_{2,j}\right\|_{h_X(\rn)}&
\lesssim\left\|\left[\sum_{j=1}^\infty\sum_{k=0}^\infty\left(\lambda_{2,j}
2^{-k(\tau+n/q)}\frac{1}{\|\mathbf{1}_{Q_{2,j}}\|_X}\right)^s
\mathbf{1}_{2^kQ_{2,j}}\right]^{1/s}\right\|_X\\
&\lesssim \left\|\left[\sum_{j=1}^\infty\sum_{k=0}^\infty2^{-k(\tau+n/q)s+kns/\theta}
\left(\frac{\lambda_{2,j}}{\|\mathbf{1}_{Q_{2,j}}\|_X}\right)^s
M^{(\theta)}\left(\mathbf{1}_{Q_{2,j}}\right)\right]^{1/s}\right\|_X\\
&\lesssim \left\|\left[\sum_{j=1}^\infty\left(\frac{\lambda_{2,j}}
{\|\mathbf{1}_{Q_{2,j}}\|_X}\right)^s\mathbf{1}_{Q_{2,j}}\right]^{1/s}\right\|_X<\infty.
\end{align*}
From this and \eqref{10.29.1}, it follows that $f\in h_X(\rn)$ and
$$\|f\|_{h_X(\rn)}\lesssim\left\|\left[\sum_{i=1}^2
\sum_{j=1}^\infty\left(\frac{\lambda_{i,j}}
{\|\mathbf{1}_{Q_{i,j}}\|_X}\right)^s
\mathbf{1}_{Q_{i,j}}\right]^{1/s}\right\|_X<\infty,$$
which completes the proof of Theorem \ref{mech}.
\end{proof}

Now we give the notions of local Littlewood--Paley functions as follows.

\begin{definition}
Let $\psi,\ \varphi\in\mathcal{S}(\rn)$ satisfy $\mathbf{1}_{Q({\vec 0_n},2)}
\le\psi\le\mathbf{1}_{Q({\vec 0_n},4)}$ and $\mathbf{1}_{Q(\vec{0}_n,4)\setminus
Q(\vec{0}_n,2)}\le\varphi\le\mathbf{1}_{Q(\vec{0}_n,8)\setminus Q(\vec{0}_n,1)}$,
$b\in (1,\infty)$ and $\lz\in(0,\fz)$. Then, for any $f\in\mathcal{S}'(\rn)$,
the \emph{local Lusin-area function $S_l(f)$, $g$-function $g_l(f)$} and
\emph{$g^\ast_\lambda$-function $(g^\ast_\lambda)_l(f)$} are
defined, respectively, by setting, for any $x\in\rn$,
\begin{equation*}
S_l(f)(x):=\left\{\int_0^1\int_{B(x,t)}
\lf|\varphi(tD)(f)(y)\r|^2\,\frac{dydt}{t^{n+1}}\right\}^{\frac{1}{2}}
+(\psi_1^\ast f)_b(x),
\end{equation*}
\begin{equation*}
g_l(f)(x):=\left\{\int_0^1\lf|\varphi(tD)(f)(x)\r|^2\,\frac{dt}{t}\right\}^\frac{1}{2}
+(\psi_1^\ast f)_b(x)
\end{equation*}
and
\begin{equation*}
(g_\lambda^\ast)_l(f)(x):=\left\{\int_0^1\int_{\rn}\left(\frac{t}{t+|x-y|}\right)^{\lambda n}
\lf|\varphi(tD)(f)(x)\r|^2\,\frac{dydt}{t^{n+1}}\right\}^\frac{1}{2}+(\psi_1^\ast f)_b(x),
\end{equation*}
where $(\psi_1^\ast f)_b$ is as in \eqref{p}.
\end{definition}

Then we have the following characterizations of $h_X(\rn)$
via local Littlewood--Paley functions.

\begin{theorem}\label{lpl}
Let $\psi,\ \varphi\in \mathcal{S}(\rn)$
satisfy
$$\mathbf{1}_{Q({\vec 0_n},2)}\le\psi\le\mathbf{1}_{Q({\vec 0_n},4)}\quad \text{and} \quad
\mathbf{1}_{Q(\vec{0}_n,4)\setminus Q(\vec{0}_n,2)}\le\varphi\le
\mathbf{1}_{Q(\vec{0}_n,8)\setminus Q(\vec{0}_n,1)}.$$
Assume that $X$ is a ball quasi-Banach function space satisfying
\eqref{ma} with $0<\theta<s\le 1$, Assumption \ref{a2}
with some $q\in(1,\fz]$ and the same $s$ as in \eqref{ma}, and \eqref{ix}.
Then
\begin{itemize}
\item[\rm(i)] $f\in h_X(\rn)$ if and only if $f \in \mathcal{S}'(\rn)$ and
$\|S_l(f)\|_X<\infty$. Moreover, for any $f\in h_X(\rn)$,
$\|f\|_{h_X(\rn)}\sim\|S_l(f)\|_X$ with the positive equivalence
constants independent of $f$.

\item[\rm(ii)] If $X$ further satisfies \eqref{ma2} with the same $\theta$ and $s$ as in \eqref{ma}, then
$f\in h_X(\rn)$ if and only if $f \in \mathcal{S}'(\rn)$ and
$\|g_l(f)\|_X<\infty$. Moreover, for any $f\in h_X(\rn)$,
$\|f\|_{h_X(\rn)}\sim\|g_l(f)\|_X$ with the positive equivalence
constants  independent of $f$.

\item[\rm(iii)] Let $\lambda\in(\max\{2/\theta,2/\theta+(1-2/q)\},\fz)$.
Then $f\in h_X(\rn)$ if and only if $f \in \mathcal{S}'(\rn)$ and
$\|(g_\lambda^\ast)_l(f)\|_X<\infty$. Moreover, for any $f\in h_X(\rn)$,
$\|f\|_{h_X(\rn)}\sim\|(g_\lambda^\ast)_l(f)\|_X$ with the positive equivalence
constants  independent of $f$.
\end{itemize}
\end{theorem}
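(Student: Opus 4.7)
The strategy for all three parts is uniform. I would combine Lemma \ref{rhh}, which gives the splitting
\[
\|f\|_{h_X(\rn)} \sim \|(\psi_1^\ast f)_b\|_X + \|(1-\psi(D))f\|_{H_X(\rn)},
\]
with the corresponding global Hardy-space characterizations: Lemma \ref{la} for (i), Theorem \ref{g}(i) for (ii), and Theorem \ref{g}(ii) for (iii). For each part, the Peetre-type summand $(\psi_1^\ast f)_b$ on either side of the equivalence is trivially part of the local Littlewood--Paley function or is directly extracted from the $h_X(\rn)$ quasi-norm. Thus the content of the theorem reduces to showing that the global Littlewood--Paley function of $g := (1-\psi(D))f$ is equivalent, in $X$-quasi-norm, to the local Littlewood--Paley function of $f$, modulo contributions that are bounded by $\|(\psi_1^\ast f)_b\|_X$.

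The decisive frequency-support observation is the following. Because $\psi \equiv 1$ on $Q(\vec 0_n, 2)$ and $\varphi \equiv 0$ on $Q(\vec 0_n, 1)$, the multiplier $\varphi(t\cdot)\,\psi(\cdot)$ vanishes identically for every $t \in (0, 1/4]$, while $\varphi(t\cdot)(1-\psi(\cdot))$ vanishes identically for every $t > 4$. Therefore $\varphi(tD)g = \varphi(tD)f$ on $(0,1/4]$ and $\varphi(tD)g \equiv 0$ on $(4,\infty)$. The global $t$-integrals defining $S(g)$, $g(g)$, and $g_\lambda^\ast(g)$ accordingly split into a small-$t$ piece over $(0,1/4]$ which (after an innocuous extension of the range to $(0,1)$) exactly coincides with the local version of the corresponding Littlewood--Paley function applied to $f$, and a bounded-$t$ piece over $(1/4, 4]$. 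To show that $g$ vanishes weakly at infinity, which is needed to apply Lemma \ref{la} and Theorem \ref{g} in the sufficiency direction, I would use a standard frequency-support argument: for $\chi\in\cs(\rn)$ and $s$ large, $\chi(sD)g$ has arbitrarily narrow frequency support inside $\supp(1-\psi)$, forcing convergence to $0$ in $\cs'(\rn)$.

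The main technical step will be the bounded-$t$ piece, where $\varphi(tD)(1-\psi(D))$ acts as convolution by a Schwartz kernel $K_t$ whose $\cs(\rn)$-seminorms are uniformly bounded in $t\in[1/4,4]$. Here I would invoke the Calder\'on reproducing formula \eqref{11.5a} to express $K_t \ast f$ in terms of $\varphi(sD)f$ for $s$ in a bounded range together with a $\psi(D)f$-type term; an application of the Peetre-type pointwise inequality (as in Lemma \ref{ep}) and the Fefferman--Stein vector-valued maximal inequality \eqref{ma} (together with \eqref{ma2} for (ii)) then dominates this contribution, in $X$-quasi-norm, by a local Peetre maximal function $(\widetilde\psi_1^\ast f)_b$ for a suitable auxiliary $\widetilde\psi\in\cs(\rn)$, which by Lemma \ref{lgc} and Remark \ref{r4.1} is equivalent in $X$ to $\|(\psi_1^\ast f)_b\|_X$. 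For part (iii), the additional $y$-integration over $\rn$ weighted by $(t/(t+|x-y|))^{\lambda n}$ is handled exactly as in the proof of Theorem \ref{g}(ii): I would dyadically decompose $\rn = \{|x-y|<t\} \cup \bigcup_{m\ge 0}\{2^mt \le |x-y|<2^{m+1}t\}$, bound each annular piece by a Lusin-area function with aperture $2^{m+1}$, and sum using the aperture estimate of Lemma \ref{af} together with the hypothesis $\lambda \in (\max\{2/\theta,\,2/\theta+(1-2/q)\},\fz)$ to ensure geometric convergence.

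The principal obstacle will be carrying out the bounded-$t$ estimate cleanly: the Calder\'on reproducing formula must be applied so that the resulting auxiliary kernels admit uniform Schwartz-type control, and the subsequent reduction to a local Peetre maximal function must preserve the vector-valued structure so that Assumption \ref{a} (and Assumption \ref{a2} via Lemma \ref{lgc}) can absorb it into $\|(\psi_1^\ast f)_b\|_X$. Once this technical bridge between the bounded-$t$ contribution and $(\psi_1^\ast f)_b$ is in place, the remainder of the argument is a routine combination of the frequency-support splitting described above with Lemma \ref{rhh} and the global characterizations, mirroring the scheme already used to prove Theorem \ref{g}.
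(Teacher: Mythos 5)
Your necessity direction (bounding $\|S_l(f)\|_X$ by $\|f\|_{h_X(\rn)}$ via Lemma \ref{rhh}, the frequency splitting $\varphi(tD)f=\varphi(tD)(1-\psi(D))f+\varphi(tD)\psi(D)f$, and Lemma \ref{la} applied to $(1-\psi(D))f$) is sound, and in fact shorter than the paper's argument, which instead runs the local atomic decomposition (Theorem \ref{ah}) through direct kernel estimates. The genuine gap is in the sufficiency direction, precisely at the bounded-$t$ transition piece you flag as ``the main technical step''. For $t\in(1/4,4]$ the multiplier $\varphi(t\cdot)(1-\psi(\cdot))$ lives on the annulus $\{1\lesssim\|\xi\|_\infty\lesssim 16\}$, most of which is invisible to $\psi(D)f$: so this piece cannot be dominated by any single-scale Peetre quantity built from $\psi(D)f$, and the claimed equivalence $\|(\widetilde\psi_1^\ast f)_b\|_X\sim\|(\psi_1^\ast f)_b\|_X$ is not what Lemma \ref{lgc} gives (that lemma compares maximal functions taken over all scales $t\in(0,1)$, and its use here would require $\|m^{\ast\ast}_{b,N}(f)\|_X\sim\|f\|_{h_X(\rn)}$, i.e.\ exactly the membership you are trying to prove). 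After inserting the reproducing formula \eqref{11.5a}, the surviving part of the transition piece is an average of $\varphi(sD)f$ over a compact range $s\in[c,1)$ against uniformly Schwartz kernels; converting such an $L^1$-type convolution average into control by the $L^2$-aggregate $\widetilde S_l(f)$ produces, pointwise, $M^{(2)}(\widetilde S_l(f))$ or, via the Peetre/Lemma \ref{ep} route, an $\ell^2$-in-scale object whose $X$-estimate is exactly the place where the paper's proof of Theorem \ref{g}(i) needed \eqref{ma2}. But \eqref{ma2} is \emph{not} assumed in parts (i) and (iii) of Theorem \ref{lpl}, so the tools you name (\eqref{ma} plus Lemma \ref{ep}) cannot close this estimate; for $X=L^p(\rn)$ with $p\le1$ the operator $M^{(2)}$ is simply unbounded.

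This is why the paper takes a different route for sufficiency: it first proves a local molecular characterization of $h_X(\rn)$ (Theorem \ref{mech}, via Theorem \ref{ah}), then in Lemma \ref{10.29.2} applies the tent-space atomic decomposition (Lemma \ref{tad}) to $\varphi(tD)(f)\mathbf{1}_{\{0<t<1\}}$, shows that each $\int_0^1\phi(tD)[a_j(\cdot,t)]\,\frac{dt}{t}$ is a local molecule, and decomposes the low-frequency term $\eta(D)\psi(D)f$ into unit-scale molecules controlled pointwise by $(\psi_1^\ast f)_b$ (unit-scale local molecules need no vanishing moments, which is the crucial advantage of working in $h_X(\rn)$ rather than reducing to $H_X(\rn)$). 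Parts (ii) and (iii) are then obtained by repeating the proof of Theorem \ref{g} with the local aperture estimate (Lemma \ref{10.10.1}) replacing Lemma \ref{af}, so that \eqref{ma2} enters only where it is actually hypothesized, namely in (ii). To repair your proposal you would need to replace the maximal-function treatment of the transition piece by this tent-space/molecular mechanism (or an equivalent argument that works under \eqref{ma} and Assumption \ref{a2} alone); as written, the sufficiency halves of (i) and (iii) do not go through.
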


To prove Theorem \ref{lpl}, we need the following several lemmas.

\begin{lemma}\label{10.29.2}
Let all the notation be the same as in Theorem \ref{lpl}. If $f\in \cs'(\rn)$ satisfies $\|S_l(f)\|_X<\fz$,
then $f\in h_X(\rn)$ and there exists a positive constant $C$, independent of $f$, such that
$\|f\|_{h_X(\rn)}\le C\|S_l(f)\|_X$.\end{lemma}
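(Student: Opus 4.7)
The plan is to combine Lemma \ref{rhh}, which splits the $h_X(\rn)$-norm into a Peetre-type maximal piece and the $H_X(\rn)$-norm of $(1-\psi(D))f$, with the Lusin-area characterization of $H_X(\rn)$ from Lemma \ref{la}. The Peetre piece $\|(\psi_1^\ast f)_b\|_X$ is bounded by $\|S_l(f)\|_X$ by the very definition of $S_l$, so the substantive work is to estimate $\|(1-\psi(D))f\|_{H_X(\rn)}$.

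Before applying Lemma \ref{la}, I would verify that $(1-\psi(D))f$ vanishes weakly at infinity: for any $\widetilde\varphi\in\mathcal{S}(\rn)$ and $t$ sufficiently large, the supports of the Fourier multipliers $\widetilde\varphi(t\cdot)$ and $1-\psi$ become disjoint (since $\psi\equiv 1$ on $Q(\vec{0}_n,2)$ while $\widetilde\varphi(t\cdot)$ concentrates near the origin), so $\widetilde\varphi(tD)(1-\psi(D))f=0$ for such $t$. Lemma \ref{la} then reduces the goal to showing $\|S((1-\psi(D))f)\|_X\lesssim\|S_l(f)\|_X$.

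The core step is the pointwise estimate
\[
S((1-\psi(D))f)(x) \lesssim \left\{\int_0^1\int_{B(x,t)}|\varphi(tD)(f)(y)|^2\,\frac{dy\,dt}{t^{n+1}}\right\}^{1/2} + m_{b,N}^{\ast\ast}(f)(x),
\]
which I would obtain by splitting the $t$-integral according to the support relations between $\varphi(t\cdot)$ and $1-\psi$. For $t\in(0,1/4]$ the support of $\varphi(t\cdot)$ lies where $1-\psi\equiv 1$, so $\varphi(tD)(1-\psi(D))f=\varphi(tD)f$ and this piece is absorbed into the Lusin-area part of $S_l(f)$. For $t\ge 4$ the Fourier supports are disjoint and the integrand vanishes. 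For the intermediate range $t\in(1/4,4]$, the kernel $K_t:=\mathcal{F}^{-1}[\varphi(t\cdot)(1-\psi)]$ is Schwartz with seminorms bounded uniformly in $t$. Rescaling, for a fixed $s\in(0,1)$ (say $s=1/8$), the function $\widetilde K_t(u):=s^n K_t(su)/C_1$ lies in the family $\mathcal{F}_N$ for an absolute constant $C_1$, and one checks $(\widetilde K_t)_s\ast f = C_1^{-1}K_t\ast f$. The definition of $m_{b,N}^{\ast\ast}(f)$, together with $|y-x|<t\le 4$, then yields $|K_t\ast f(y)|\lesssim m_{b,N}^{\ast\ast}(f)(x)$; integrating over the bounded region in $(t,y)$ produces the second term in the pointwise bound.

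Taking $X$-norms and applying Lemma \ref{lgc}, which gives $\|m_{b,N}^{\ast\ast}(f)\|_X\sim\|(\psi_1^\ast f)_b\|_X\le\|S_l(f)\|_X$, and then combining Lemmas \ref{la} and \ref{rhh}, completes the argument. The main obstacle is the intermediate range $t\in(1/4,4]$: the Fourier support of $K_t$ sits in an annulus disjoint from the low-frequency support of $\psi$, so $K_t\ast f$ cannot be expressed directly in terms of $\psi(D)f$. The remedy is to route the estimate through the local grand maximal function $m_{b,N}^{\ast\ast}(f)$ and to exploit the equivalence with $(\psi_1^\ast f)_b$ supplied by Lemma \ref{lgc}.
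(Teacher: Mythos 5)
Your outer frame (Lemma \ref{rhh} plus the Lusin-area characterization in Lemma \ref{la}), the treatment of the ranges $t\in(0,1/4]$ and $t\in[4,\infty)$ via Fourier-support considerations, and the verification that $(1-\psi(D))f$ vanishes weakly at infinity are all sound (though the phrase ``the supports become disjoint'' is only literally correct for compactly supported multipliers; for a general Schwartz $\widetilde\varphi$ you must argue via rapid decay of $\widetilde\varphi(t\cdot)$ on $\supp(1-\psi)$). The fatal problem is the intermediate range $t\in(1/4,4]$. You bound $|K_t\ast f(y)|\lesssim m_{b,N}^{\ast\ast}(f)(x)$ and then invoke ``Lemma \ref{lgc}, which gives $\|m_{b,N}^{\ast\ast}(f)\|_X\sim\|(\psi_1^\ast f)_b\|_X$.'' Lemma \ref{lgc} says no such thing: it identifies the $X$-quasi-norms of the \emph{local} maximal functions $m(f,\Phi)$, $m_a^\ast(f,\Phi)$, $m_b^{\ast\ast}(f,\Phi)$, $m_{b,N}^{\ast\ast}(f)$, all of which involve the full range of scales $t\in(0,1)$, whereas $(\psi_1^\ast f)_b$ is a single-scale, low-pass quantity. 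The claimed equivalence is false in general: if $\mathcal{F}f$ is supported in $\{3\le|\xi|_\infty\le4\}$ then $\psi(D)f=0$, so $(\psi_1^\ast f)_b\equiv0$, while $m_{b,N}^{\ast\ast}(f)\not\equiv0$ unless $f=0$. What Lemma \ref{lgc} does give is $\|m_{b,N}^{\ast\ast}(f)\|_X\sim\|m_b^{\ast\ast}(f,\Phi)\|_X\sim\|f\|_{h_X(\rn)}$, so your pointwise bound only yields $\|S((1-\psi(D))f)\|_X\lesssim\|S_l(f)\|_X+\|f\|_{h_X(\rn)}$, and the argument becomes circular: you would be bounding $\|f\|_{h_X(\rn)}$ by $\|S_l(f)\|_X+\|f\|_{h_X(\rn)}$ without knowing a priori that the latter is finite, and with no small constant to absorb it. The difficulty you correctly identified (the kernel $K_t$ lives on a unit-scale frequency annulus not seen by $\psi(D)f$) is real, and routing it through the grand maximal function does not resolve it.

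The paper's proof avoids this by never trying to control unit-scale annulus frequencies pointwise. It starts from the Calder\'on reproducing formula \eqref{11.5a}, $f=\eta(D)\psi(D)f+\int_0^1\phi(tD)\varphi(tD)f\,\frac{dt}{t}$; the low-frequency piece is split over unit cubes into local molecules whose coefficients are dominated pointwise by $(\psi_1^\ast f)_b$, and the integral piece is handled at the level of norms: $\varphi(tD)f\,\mathbf{1}_{\{0<t<1\}}$ belongs to the tent space $T_X^1(\mathbb{R}^{n+1}_+)$ because $\|S_l(f)\|_X<\infty$, Lemma \ref{tad} provides a tent-space atomic decomposition, each $\int_0^1\phi(tD)[a_j(\cdot,t)]\,\frac{dt}{t}$ is shown to be a local molecule, and Theorem \ref{mech} concludes. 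To repair your approach you would need a genuine norm-level mechanism of this kind for the $t\in(1/4,4]$ contribution (note that a $g$-function/Peetre-maximal argument as in Theorem \ref{g}(i) is unavailable here, since part (i) of Theorem \ref{lpl} does not assume \eqref{ma2}); a pointwise bound by any maximal function equivalent to $\|f\|_{h_X(\rn)}$ cannot close the estimate.
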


\begin{proof}
Let $f\in \cs'(\rn)$ satisfy $\|S_l(f)\|_X<\infty$.
Then, by \cite[Proposition 1.1.6]{gl14}, we know that there exist
$\phi,\ \eta\in\cs(\rn)$, with $\vec 0_n\notin \supp(\cf^{-1}(\phi))$ and
$B(\vec 0_n,1)\subset \supp(\cf^{-1}(\eta))$, such that
\begin{equation}\label{11.5a}
f=\eta(D)\psi(D)(f)+\int_0^1\phi(tD)\varphi(tD)(f)\,\frac{dt}{t}\quad\text{in}\quad \mathcal{S}'(\rn).
\end{equation}
Let $2\zz:=\{2z:\ z\in\zz\}$ and $\mathcal{Q}_1$ be the collection of all cubes in $\rn$
which are translations of $(0,2]^n$ and whose vertices lie on the lattice $(2\zz)^n$.
Take an arrangement of all cubes of $\mathcal{Q}_1$,
which is denoted by $\{Q_{2,j}\}_{j=1}^\infty$.
Then, for almost every $x\in\rn$,
$$\eta(D)\psi(D)(f)(x)=\sum_{j=1}^\infty\eta(D)\psi(D)(f)(x)\mathbf{1}_{Q_{2,j}}(x).$$
For any $j\in\nn$, if $\|\eta(D)\psi(D)(f)\|_{L^\infty(Q_{2,j})}=0$, define
$$\lambda_{2,j}:=0\quad \text{and}\quad m_{2,j}:=0;$$
if $\|\eta(D)\psi(D)(f)\|_{L^\infty(Q_{2,j})}\neq0$, define
$$\lambda_{2,j}:=\|\mathbf{1}_{Q_{2,j}}\|_X\|
\eta(D)\psi(D)(f)\|_{L^\infty(Q_{2,j})} \quad \text{and} \quad
m_{2,j}:=\frac{\eta(D)\psi(D)(f)\mathbf{1}_{Q_{2,j}}}
{\|\mathbf{1}_{Q_{2,j}}\|_X\|\eta(D)\psi(D)(f)\|_{L^\infty(Q_{2,j})}}.$$
Then, for almost every $x\in\rn$,
\begin{equation*}
\eta(D)\psi(D)(f)(x)=\sum_{j=1}^\infty\lambda_{2,j}m_{2,j}(x).
\end{equation*}
Since $|Q_{2,j}|>1$ for any $j\in\nn$, it follows that,
for any $j\in\nn$, $m_{2,j}$ is a local-$(X,\infty,d,\tau)$-molecule
for any $\tau\in(0,\infty)$. Then, similarly to \eqref{10.31.1}, we find that,
for any $x\in\rn$,
\begin{align*}
&\left[\sum_{j=1}^\infty\left(\frac{\lambda_{2,j}}{\|\mathbf{1}_{Q_{2,j}}\|_X}\right)^s
\mathbf{1}_{Q_{2,j}}(x)\right]^{1/s}\\
&\hs\le\sup_{y\in\rn,|y-x|<c}\int_{\rn}|(\cf^{-1}\eta)(y-z)|
\frac{|(\psi(D)f(z)|}{(1+|z-x|)^b}(1+|z-x|)^b\,dz\lesssim(\psi_1^\ast f)_b(x),
\end{align*}
which, combined with Theorem \ref{mech}, further implies that
\begin{equation}\label{11.5b}
\lf\|\eta(D)\psi(D)(f)\r\|_{h_X(\rn)}\lesssim\left\|\left[\sum_{j=1}^\infty
\left(\frac{\lambda_{2,j}}{\|\mathbf{1}_{Q_{2,j}}\|_X}\right)^s
\mathbf{1}_{Q_{2,j}}\right]^{1/s}\right\|_X
\lesssim\lf\|(\psi_1^\ast f)_b\r\|_X\lesssim\|S_l(f)\|_X.
\end{equation}

Furthermore, by $\|S_l(f)\|_X<\infty$, we conclude that
$$\left\|\left\{\int_0^1\int_{B(\cdot,t)}\lf|\varphi(tD)(f)(y)\r|^2\,
\frac{dydt}{t^{n+1}}\right\}^{\frac{1}{2}}\right\|_X<\infty,$$
which implies that $\varphi(tD)(f)\mathbf{1}_{\{0<t<1\}}\in T^1_X(\mathbb{R}_+^{n+1})$.
From Lemma \ref{tad}, we deduce that there exist
a sequence $\{\lambda_j\}_{j\in\nn}\subset[0,\infty)$ and a sequence $\{a_j\}_{j\in\nn}$
of $(T_X^1,\infty)$-atoms supported, respectively,  in $\{T(B_j)\}_{j\in\nn}$
such that, for almost every $(x,t)\in\mathbb{R}^{n+1}_+$,
\begin{equation}\label{tsd1}
\varphi(tD)(f)(x)\mathbf{1}_{\{0<t<1\}}(t)=\sum_{j=1}^\infty\lambda_j a_j(x,t)
\end{equation}
and
\begin{equation}\label{tsd2}
|\varphi(tD)(f)(x)|\mathbf{1}_{\{0<t<1\}}(t)=\sum_{j=1}^\infty\lambda_j |a_j(x,t)|.
\end{equation}
Moreover,
\begin{equation}\label{lmnes}
\left\|\left[\sum_{j=1}^\infty
\left(\frac{\lambda_j}{\|\mathbf{1}_{B_j}\|_X}\right)^s
\mathbf{1}_{B_j}\right]^{1/s}\right\|_X\sim\|\varphi(tD)(f)
\mathbf{1}_{\{0<t<1\}}\|_{T^1_X(\mathbb{R}^{n+1}_+)}\lesssim\|S_l(f)\|_X.
\end{equation}
We now claim that
\begin{equation}\label{exor}
\int_0^1\phi(tD)\varphi(tD)(f)\,\frac{dt}{t}=\sum_{j=1}^\infty
\lambda_j\int_0^1\phi(tD)[a_j(\cdot,t)]\,\frac{dt}{t} \quad\text{in}\quad \cs'(\rn).
\end{equation}
To this end, for any $f\in\cs'(\rn)$, it follows that there exist positive integers $k$ and $m$,
depending only on $f$, such that, for any $x\in\rn$,
\begin{align}\label{scf}
|\varphi(tD)(f)(x)|&=\lf|\lf\langle f,\lf(\cf^{-1}\varphi\r)_t(x-\cdot)\r\rangle\r|\notag\\
&\lesssim \sum_{|\alpha|\leq k,|\beta|\leq m} \sup_{y\in\rn}\lf|y^\alpha
\partial^\beta\lf[\lf(\cf^{-1}\varphi\r)_t\r](x-y)\r|\notag\\
&\sim t^{-n}\sup_{|\alpha|\leq k,|\beta|\leq m}\sup_{y\in\rn}
\left|(x+y)^\alpha\partial^\beta\lf(\cf^{-1}\varphi\r)\left(\frac{y}{t}\right)\right|\notag\\
&\lesssim t^{-n}\sup_{|\alpha|\leq k,|\beta|\leq m}
\sup_{y\in\rn}\sum_{0\leq l\leq|\alpha|}|x|^l|y|^{|\alpha|-l}
\left|\partial^\beta\lf(\cf^{-1}\varphi\r)\left(\frac{y}{t}\right)\right|
\lesssim t^{-n-k} (1+|x|)^k.
\end{align}
By \cite[Lemma 2.4]{ysy} and the fact that, for any $\alpha \in \zz_+^n$,
$\int_{\rn} x^\alpha(\cf^{-1}\phi)(x)dx=0$, we find that,
for any $M\in\nn$ with $M>n+k+1$ and $x\in\rn$,
$$\lf|\lf(\cf^{-1}\phi\r)_t\ast\gamma(x)\r|\lesssim t^M(1+|x|)^{-n-M},$$
which, together with \eqref{tsd2} and \eqref{scf}, implies that
\begin{align*}
\int_0^1 \sum_{j=1}^\infty\lambda_j\lf|\lf\langle a_j(\cdot,t),(\cf^{-1}\phi)_t\ast
\gamma\r\rangle\r|\,\frac{dt}{t}
&\leq\int_0^1 \sum_{j=1}^\infty\lambda_j\int_{\rn}|a_j(x,t)|\left|(\cf^{-1}\phi)_t\ast
\gamma(x)\r|\,dx\,\frac{dt}{t}\\
&=\int_0^1 \int_{\rn}\left[\sum_{j=1}^\infty\lambda_j|a_j(x,t)|\right]\left|(\cf^{-1}\phi)_t\ast
\gamma(x)\r|\,dx\,\frac{dt}{t}\\
&=\int_0^1\int_{\rn}|\varphi(tD)(f)(x)|\lf|(\cf^{-1}\phi)_t\ast\gamma(x)\r|
\,dx\,\frac{dt}{t}\\
&\lesssim \int_0^1\int_{\rn}t^{-n-k} (1+|x|)^kt^M(1+|x|)^{-n-M}\,dx\frac{dt}{t}
\lesssim 1.
\end{align*}
From this, \eqref{tsd1} and the Fubini theorem, we deduce that, for any $\gamma\in\cs(\rn)$,
\begin{align*}
\left\langle \int_0^1\phi(tD)\varphi(tD)(f)\,\frac{dt}{t}, \gamma\right\rangle
&=\lim_{\varepsilon\to0^+}\left\langle \int_\varepsilon^1\phi(tD)\varphi(tD)(f)\,
\frac{dt}{t}, \gamma\right\rangle\\
&=\lim_{\varepsilon\to0^+}
\int_\varepsilon^1\left\langle\phi(tD)\varphi(tD)(f),\gamma\right\rangle\,\frac{dt}{t}\\
&=\lim_{\varepsilon\to0^+}\int_\varepsilon^1\left\langle \varphi(tD)(f), (\cf^{-1}\phi)_t\ast\gamma\right\rangle\,\frac{dt}{t}\\
&=\int_0^1 \sum_{j=1}^\infty\lambda_j\lf\langle a_j(\cdot,t),(\cf^{-1}\phi)_t\ast
\gamma\r\rangle\,\frac{dt}{t}\\
&=\sum_{j=1}^\infty\lambda_j\int_0^1
\lf\langle a_j(\cdot,t),(\cf^{-1}\phi)_t\ast\gamma\r\rangle\,\frac{dt}{t}\\
&=\left\langle\sum_{j=1}^\infty\lambda_j\int_0^1
\phi(tD)[a_j(\cdot,t)]\,\frac{dt}{t},\gamma\right\rangle,
\end{align*}
which proves \eqref{exor}, where $\varepsilon\to0^+$ means $\varepsilon\in(0,\fz)$ and $\varepsilon\to0$.
By \eqref{tsd2}, we conclude that, for any $t\in[1,\fz)$ and $x\in\rn$,
$a_j(x,t)=0$. Therefore, for any $t\in[1,\fz)$,
$$\phi(tD)(a_j(\cdot,t))=\int_{\rn}(\cf^{-1}\phi)_t(y)a_j(\cdot-y,t)\,dy=0,$$
which further implies that
$$\int_0^1\phi(tD)(a_j(\cdot,t))\,\frac{dt}{t}
=\int_0^\infty\phi(tD)(a_j(\cdot,t))\,\frac{dt}{t}.$$
Let $q\in(1,\infty)$, $d\in[d_X,\infty)\cap\zz_+$ and $\tau\in (n(1/\theta-1/q),\infty)$.
Then an argument similar to that used in the proof of \cite[Lemma 4.8]{hyy}
shows that, for any $j\in\nn$, $\int_0^1\phi(tD)[a_j(\cdot,t)]\,\frac{dt}{t}$ is
an $(X,q,d,\tau)$-molecule up to a harmless constant multiple.
From Theorem \ref{mech} and \eqref{lmnes}, it follows that
$$\left\|\int_0^1\phi(tD)\varphi(tD)(f)\,\frac{dt}{t}\right\|_{h_X(\rn)}
\sim\left\|\left[\sum_{j=1}^\infty
\left(\frac{\lambda_j}{\|\mathbf{1}_{B_j}\|_X}\right)^s
\mathbf{1}_{B_j}\right]^{1/s}\right\|_X\lesssim\|S_l(f)\|_X,$$
which, combined with \eqref{11.5a} and \eqref{11.5b},
further implies that $f\in h_X(\rn)$
and $\|f\|_{h_X(\rn)}\ls\|S_l(f)\|_X$. This finishes the proof of Lemma \ref{10.29.2}.
\end{proof}

The following lemma is the converse case of Lemma \ref{10.29.2}.

\begin{lemma}\label{10.9.5}
Let all the notation be the same as in Theorem \ref{lpl}. Then there exists a positive constant $C$ such that,
for any $f\in h_X(\rn)$,
$\|S_l(f)\|_X\le C\|f\|_{h_X(\rn)}$.
\end{lemma}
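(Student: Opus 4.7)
The plan is to decompose $S_l(f)$ into its two defining pieces and bound each by $\|f\|_{h_X(\rn)}$. For the Peetre-type piece $(\psi_1^\ast f)_b$, Lemma \ref{rhh} immediately gives $\|(\psi_1^\ast f)_b\|_X \lesssim \|f\|_{h_X(\rn)}$, so the entire task reduces to controlling the truncated Lusin-area piece
\[
\widetilde{S}_l(f)(x) := \left\{\int_0^1\int_{B(x,t)}|\varphi(tD)(f)(y)|^2\,\frac{dy\,dt}{t^{n+1}}\right\}^{1/2}.
\]

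By Corollary \ref{ld} and a density argument, it suffices to work with $f\in L^2(\rn)\cap h_X(\rn)$. Theorem \ref{ah} together with Remark \ref{l2} provides a decomposition $f=\sum_{j=1}^\infty\lambda_j a_j$ converging in both $\cs'(\rn)$ and $L^2(\rn)$, where each $a_j$ is a local-$(X,\infty,d)$-atom supported in a cube $Q_j$ and
\[
\left\|\left[\sum_{j=1}^\infty\lf(\frac{\lambda_j}{\|\mathbf{1}_{Q_j}\|_X}\r)^s\mathbf{1}_{Q_j}\right]^{1/s}\right\|_X \lesssim \|f\|_{h_X(\rn)}.
\]
The $L^2$ convergence plus the $L^2$-boundedness of $\widetilde{S}_l$ (inherited from the classical full Lusin-area function via Plancherel) justifies
\[
\widetilde{S}_l(f)\le \sum_{j=1}^\infty \lambda_j \widetilde{S}_l(a_j)\le \sum_{j=1}^\infty\lambda_j \widetilde{S}_l(a_j)\mathbf{1}_{4Q_j} + \sum_{j=1}^\infty\lambda_j \widetilde{S}_l(a_j)\mathbf{1}_{(4Q_j)^\complement} =: \mathrm{I} + \mathrm{II}.
\]
For $\mathrm{I}$, the $L^2$-boundedness of $\widetilde{S}_l$ combined with the atom size gives $\|\widetilde{S}_l(a_j)\mathbf{1}_{4Q_j}\|_{L^2(\rn)}\lesssim |Q_j|^{1/2}\|\mathbf{1}_{Q_j}\|_X^{-1}$, and then Lemma \ref{r} bounds $\|\mathrm{I}\|_X$ by the right-hand side of the displayed inequality above.

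For $\mathrm{II}$, the key pointwise bound I would establish is
\[
\widetilde{S}_l(a_j)(x)\mathbf{1}_{(4Q_j)^\complement}(x) \lesssim \frac{1}{\|\mathbf{1}_{Q_j}\|_X}M^{(\theta)}(\mathbf{1}_{Q_j})(x),
\]
after which \eqref{ma} delivers $\|\mathrm{II}\|_X$ with the required control. To prove this bound I split into two cases. When $|Q_j|<1$, the atom has vanishing moments up to order $d\ge d_X$, and I use the Taylor expansion of $(\cf^{-1}\varphi)_t$ around the center $x_j$ together with the Schwartz decay of $\cf^{-1}\varphi$; the integration $\int_0^1\int_{B(x,t)}(\cdot)\,dy\,dt/t^{n+1}$ combined with the choice $d\ge\lceil n(1/\theta-1)\rceil$ yields a decay of the form $\ell(Q_j)^{n+d+1}/|x-x_j|^{n+d+1}$, which is dominated by $[M(\mathbf{1}_{Q_j})(x)]^{(n+d+1)/n}\lesssim M^{(\theta)}(\mathbf{1}_{Q_j})(x)$. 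When $|Q_j|\ge 1$ the atom need not have vanishing moments; however, the constraint $t\in(0,1)$ keeps $t$ strictly smaller than $\ell(Q_j)$, so for $x\in(4Q_j)^\complement$ and $y\in B(x,t)$ the point $y$ stays far from $\supp(a_j)$, and the Schwartz decay of $\cf^{-1}\varphi$ yields $|\varphi(tD)(a_j)(y)|\lesssim t^N|x-x_j|^{-n-N}\|a_j\|_{L^1(\rn)}$ for arbitrary $N$; choosing $N$ sufficiently large and integrating in $t\in(0,1)$ again leads to a bound of the form $|x-x_j|^{-(n+N)}\|\mathbf{1}_{Q_j}\|_X^{-1}$, dominated by the same maximal quantity $\|\mathbf{1}_{Q_j}\|_X^{-1}M^{(\theta)}(\mathbf{1}_{Q_j})(x)$.

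The main obstacle is the second case (atoms with $|Q_j|\ge 1$ lacking vanishing moments), because the classical Lusin-area estimates for $H_X(\rn)$ rely crucially on cancellation; here cancellation must be replaced by the truncation $t<1\le \ell(Q_j)$ together with Schwartz decay, and one must choose the decay exponent $N$ consistently with the parameter $\theta$ appearing in \eqref{ma}. Once this pointwise estimate is in hand, assembling $\|\mathrm{I}\|_X+\|\mathrm{II}\|_X\lesssim \|f\|_{h_X(\rn)}$ and combining with the bound on $(\psi_1^\ast f)_b$ completes the proof.
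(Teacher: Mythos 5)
Your treatment of the two pieces of $S_l(f)$ --- Lemma \ref{rhh} for $(\psi_1^\ast f)_b$, the near/far splitting of $\widetilde{S}_l(a_j)$, the domination of the far parts by $\|\mathbf{1}_{Q_j}\|_X^{-1}M^{(\theta)}(\mathbf{1}_{Q_j})$ followed by \eqref{ma}, and Lemma \ref{r} for the near parts --- is essentially the paper's argument, and your far-field estimates (Taylor expansion plus $d\ge d_X$ for the small cubes; the truncation $t\in(0,1)\le\ell(Q_j)$ together with the Schwartz decay of $\cf^{-1}\varphi$ for the large cubes) are exactly what is needed. The genuine problem is your opening reduction to $f\in L^2(\rn)\cap h_X(\rn)$ via Corollary \ref{ld} and ``a density argument''. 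Corollary \ref{ld} requires $X$ to have an absolutely continuous quasi-norm, and this hypothesis is deliberately absent from Theorem \ref{lpl} (hence from this lemma): the hypotheses of Section \ref{s5} are designed to cover spaces such as Morrey spaces, for which absolute continuity fails, so the density step is simply not available here. Moreover, even where density does hold, passing the inequality $\|S_l(f)\|_X\lesssim\|f\|_{h_X(\rn)}$ from a dense subclass to all of $h_X(\rn)$ is not automatic for a sublinear operator: one needs an extra limiting step (convergence in $h_X(\rn)$ implies convergence in $\cs'(\rn)$, hence pointwise convergence of $\varphi(tD)(f_n)(y)$, then Fatou's lemma in the cone integral and a Fatou-type property of $\|\cdot\|_X$), which you do not supply.

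Both issues disappear if you drop the $L^2$ detour altogether: Theorem \ref{ah} already gives, for every $f\in h_X(\rn)$, an atomic decomposition $f=\sum_{j}\lambda_j a_j$ converging in $\cs'(\rn)$ with the required quasi-norm control, and since $\varphi(tD)(f)(y)=\lf\langle f,(\cf^{-1}\varphi)_t(y-\cdot)\r\rangle=\sum_{j}\lambda_j\varphi(tD)(a_j)(y)$ pointwise, Minkowski's inequality in $L^2(\frac{dy\,dt}{t^{n+1}})$ over the truncated cone yields $\widetilde{S}_l(f)\le\sum_{j}\lambda_j\widetilde{S}_l(a_j)$ directly; this is how the paper proceeds, splitting the index set according to $\ell(Q_j)<1$ or $\ell(Q_j)\ge1$ and invoking the $H_X(\rn)$ area-function argument (as in Lemma \ref{la}) for the small-cube atoms. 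One further small point: Lemma \ref{r} is tied to the exponent $q$ of Assumption \ref{a2}, so for the near parts you should estimate $\|\mathbf{1}_{4Q_j}\widetilde{S}_l(a_j)\|_{L^q(\rn)}$ with that $q$ (the atoms are $L^\infty$-atoms, so this costs nothing), rather than fixing $q=2$.
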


\begin{proof}
For any $f\in h_X(\rn)$, by Lemma  \ref{rhh}, we know that
$(\psi_1^\ast f)_b \in X$ and
\begin{equation}\label{11.2.1}
\lf\|(\psi_1^\ast f)_b\r\|_X\lesssim \|f\|_{h_X(\rn)}.
\end{equation}
For  any $x\in\rn$, let
$$\widetilde{S}_l(f)(x):=\left\{\int_0^1\int_{B(x,t)}\lf|\varphi(tD)(f)(y)\r|^2\,
\frac{dydt}{t^{n+1}}\right\}^{\frac{1}{2}}.$$
From Theorem \ref{ah}, we deduce that there exist a sequence $\{a_j\}_{j=1}^\infty$
of local-$(X,\infty,d)$-atoms supported,  respectively, in cubes
$\{Q_j\}_{j=1}^\infty$ and a sequence
$\{\lambda_j\}_{j=1}^\infty$ of non-negative numbers such that
\begin{equation}\label{10.9.1}
f=\sum_{j=1}^\infty \lambda_j a_j \quad\quad \text{in}\quad \mathcal{S}'(\rn)
\end{equation}
and
\begin{equation}\label{10.9.2}
\left\|\left[\sum_{j=1}^\infty\left(\frac{\lambda_j}{\|\mathbf{1}_{Q_j}\|_X}\right)^s
\mathbf{1}_{Q_j}\right]^{1/s}\right\|_X \lesssim\|f\|_{h_X(\rn)}.
\end{equation}
Let $\cq_1:=\{j\in\nn:\ \ell(Q_j)\in(0,1)\}$ and $\cq_2:=\{j\in\nn:\ \ell(Q_j)\in[1,\fz)\}$.
Then, by \eqref{10.9.1}, we find that, for any $x\in\rn$,
\begin{equation}\label{11.5c}
\widetilde{S}_l(f)(x)\le\sum_{j=1}^\fz\lambda_j\widetilde{S}_l\lf(a_j\r)(x)
=\sum_{j\in\cq_1}\lambda_{j}\widetilde{S}_l\lf(a_{j}\r)(x)+
\sum_{j\in\cq_2}\lambda_{j}\widetilde{S}_l\lf(a_j\r)(x)=:\mathrm{I}_1+\mathrm{I}_2.
\end{equation}
Using the fact that, for any $j\in\cq_1$, $a_{j}$ is an $(X,\infty,d)$-atom,
by an argument similar to that used in the proof of Lemma \ref{la}, we conclude that
\begin{equation}\label{11.5d}
\|\mathrm{I}_1\|_X\lesssim\left\|\left[\sum_{j\in\cq_1}\left(\frac{\lambda_{j}}
{\|\mathbf{1}_{Q_{j}}\|_X}\right)^s
\mathbf{1}_{Q_{j}}\right]^{1/s}\right\|_X.
\end{equation}

Now we  deal with $\mathrm{I}_2$. Let $q\in(1,\infty)$ be as in
Lemma \ref{r}. Since $\widetilde{S}_l$ is bounded on $L^q(\rn)$,
it follows that, for any $j\in\cq_2$,
$$\lf\|\mathbf{1}_{4Q_{j}}\widetilde{S}_l\lf(a_{j}\r)\r\|_X\lesssim
\|a_{j}\|_{L^q(\rn)}
\lesssim|Q_{j}|^{1/q}\|\mathbf{1}_{Q_{j}}\|_X^{-1},$$
which, combined with Lemma \ref{r}, implies that
\begin{equation}\label{10.9.3}
\left\|\sum_{j\in\cq_2}\lambda_{j}\mathbf{1}_{4Q_{j}}\widetilde{S}_l\lf(a_{j}\r)
\right\|_X\lesssim\left\|\left[\sum_{j\in\cq_2}^\infty\left(\frac{\lambda_{j}}
{\|\mathbf{1}_{Q_{j}}\|_X}\right)^s
\mathbf{1}_{Q_{j}}\right]^{1/s}\right\|_X.
\end{equation}
Moreover, by the fact that, for any $j\in\cq_2$,
$\ell(Q_{j})\ge1$, we know that, for any $t\in(0,1)$, $x\notin 4Q_{j}$,
$z\in Q_{j}$ and $y\in B(x,t)$,
$1\lesssim|y-z|\sim |x-x_{Q_{j}}|$, where
$x_{Q_{j}}$ denotes the center of the cube $Q_{j}$ and $\ell(Q_{j})$ its side length, which, together with
$\varphi \in \cs(\rn)$, further implies that, for any $j\in\nn$, $N\in\nn$, $t\in(0,1)$,
$x\notin 4Q_{j}$ and $y\in B(x,t)$,
\begin{align*}
|\varphi(tD)a_{j}(y)|&=\left|\int_{\rn}(\cf^{-1}(\varphi))_t(y-z)a_{j}(z)\,dz\right|
\le\int_{Q_{j}}|(\cf^{-1}(\varphi))_t(y-z)||a_{j}(z)|\,dz\\
&\lesssim\|a_{j}\|_{L^\infty(\rn)}t^{-n}\int_{Q_{j}}
\left(\frac{t}{|y-z|}\right)^N\,dz
\lesssim\|a_{j}\|_{L^\infty(\rn)}t^{N-n}\frac{[\ell(Q_{j})]^n}{|x-x_{Q_{j}}|^N}.
\end{align*}
From this, we deduce that, for any $j\in\cq_2$, $x\notin 4Q_{j}$
and $N\in(n/\theta,\infty)$,
\begin{align*}
\lf|\widetilde{S}_l\lf(a_{j}\r)(x)\r|&\lesssim \|a_{j}\|_{L^\infty(\rn)}
\frac{[\ell(Q_{j})]^n}{|x-x_{Q_{j}}|^N}\left(\int_0^1t^{2N-2n-1}\,dt\right)^{1/2}\\
&\lesssim \frac{1}{\|\mathbf{1}_{Q_{j}}\|_X}\frac{[\ell(Q_{j})]^N}
{|x-x_{Q_{j}}|^N}\lesssim\frac{1}{\|\mathbf{1}_{Q_{j}}\|_X}M^{(\theta)}
\lf(\mathbf{1}_{Q_{j}}\r)(x),
\end{align*}
which, combined with \eqref{ma}, implies that
\begin{equation}\label{10.9.4}
\left\|\sum_{j\in\cq_2}\lambda_{j}\mathbf{1}_{(4Q_{j})^\complement}
\widetilde{S}_l\lf(a_{j}\r)\right\|_X\lesssim
\left\|\left[\sum_{j\in\cq_2}\left(\frac{\lambda_{j}}
{\|\mathbf{1}_{Q_{j}}\|_X}\right)^s
\mathbf{1}_{Q_{j}}\right]^{1/s}\right\|_X.
\end{equation}
By \eqref{10.9.2}, \eqref{11.5c}, \eqref{11.5d}, \eqref{10.9.3} and \eqref{10.9.4},
we conclude that
$\|\widetilde{S}_l(f)\|_X\lesssim\|f\|_{h_X(\rn)}$,
which, together with \eqref{11.2.1}, implies that
$\|S_l(f)\|_X\ls\|f\|_{h_X(\rn)}$. This finishes the proof of Lemma \ref{10.9.1}.
\end{proof}

For any $\alpha \in [1,\infty)$, $f\in\cs'(\rn)$ and $x\in\rn$, let
\begin{equation*}
(\widetilde{S}_\alpha)_l(f)(x):=\left\{\int_0^1\int_{B(x,\alpha t)}
\lf|\varphi(tD)(f)(y)\r|^2\,
\frac{dydt}{t^{n+1}}\right\}^{\frac{1}{2}}.
\end{equation*}
Using Lemma \ref{af}, we obtain the aperture estimate of the local Lusin-area function
as follows.

\begin{lemma}\label{10.10.1}
Let $\az\in[1,\fz)$. Assume that $X$ is a ball quasi-Banach
function space satisfying both \eqref{ma} with $0<\theta<s\le 1$ and Assumption \ref{a2}
with some $q\in(1,\fz]$ and the same $s$ as in \eqref{ma}.
Then there exists a positive constant $C$, independent of $\alpha$, such that, for any  $f \in\cs'(\rn)$,
$$\lf\|(\widetilde{S}_\alpha)_l(f)\r\|_X\le C\max\lf\{\alpha^{(\frac{1}{2}-\frac1q)n},1\r\}
\alpha^{\frac n\theta}\lf\|\widetilde{S}_l(f)\r\|_X.$$
\end{lemma}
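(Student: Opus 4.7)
The strategy is to deduce Lemma \ref{10.10.1} directly from the already-proved tent-space aperture estimate in Lemma \ref{af}. The idea is that the local Lusin-area function is nothing but the global Lusin-area function of a time-truncated input on $\mathbb{R}_+^{n+1}$, so one should build a suitable tent-space function from $\varphi(tD)(f)$ and then quote Lemma \ref{af}.

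More precisely, I would introduce the measurable function
$$g(y,t) := \varphi(tD)(f)(y)\,\mathbf{1}_{(0,1)}(t), \qquad (y,t)\in \mathbb{R}_+^{n+1}.$$
Measurability on $\mathbb{R}_+^{n+1}$ is immediate because $\varphi(tD)(f)$ is smooth in $y$ and continuous in $t$. Unwinding the definitions of $\Gamma_\alpha(x)$ and $\mathcal{A}_\alpha(\cdot)$, one checks that for every $x\in \rn$ and $\alpha\in[1,\infty)$,
$$\mathcal{A}_\alpha(g)(x) = \left\{\int_0^1\int_{B(x,\alpha t)}\lf|\varphi(tD)(f)(y)\r|^2\,\frac{dy\,dt}{t^{n+1}}\right\}^{1/2} = (\widetilde{S}_\alpha)_l(f)(x),$$
the indicator $\mathbf{1}_{(0,1)}(t)$ truncating the time integral to $(0,1)$ while the spatial integration remains over $B(x,\alpha t)$. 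Specialising to $\alpha = 1$ yields $\mathcal{A}_1(g) = \widetilde{S}_l(f)$.

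With these two identifications in hand, I would simply apply Lemma \ref{af} to the function $g$ under the hypotheses on $X$ (namely \eqref{ma} with some $0<\theta<s\le 1$ and Assumption \ref{a2} with the same $s$, which are precisely the hypotheses of Lemma \ref{10.10.1}), obtaining
$$\lf\|(\widetilde{S}_\alpha)_l(f)\r\|_X = \lf\|\mathcal{A}_\alpha(g)\r\|_X \ls \max\lf\{\alpha^{(1/2-1/q)n},1\r\}\,\alpha^{n/\theta}\,\lf\|\mathcal{A}_1(g)\r\|_X = \max\lf\{\alpha^{(1/2-1/q)n},1\r\}\,\alpha^{n/\theta}\,\lf\|\widetilde{S}_l(f)\r\|_X,$$
with the implicit constant independent of $\alpha$ and $f$, as required.

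I do not anticipate any real obstacle here: once the time-truncation trick is observed, the lemma is an immediate corollary of Lemma \ref{af}, and no further assumption on $X$ or on $f$ beyond those already in force is needed. The only point that deserves a brief remark is that $g$ need not lie a priori in $T^1_X(\mathbb{R}^{n+1}_+)$, but this is harmless because Lemma \ref{af} is stated for arbitrary measurable $f\colon \mathbb{R}^{n+1}_+\to\mathbb{C}$ (both sides of the inequality being allowed to equal $+\infty$), so the estimate is vacuous unless $\widetilde{S}_l(f)\in X$ and, in the latter case, gives the desired quantitative bound.
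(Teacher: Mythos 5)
Your proposal is correct and is essentially identical to the paper's own proof: the paper likewise applies Lemma \ref{af} to the time-truncated function $|\varphi(tD)(f)|\mathbf{1}_{(0,1)}$, using the identifications $\ca_\alpha(|\varphi(tD)(f)|\mathbf{1}_{(0,1)})=(\widetilde{S}_\alpha)_l(f)$ and $\ca_1(|\varphi(tD)(f)|\mathbf{1}_{(0,1)})=\widetilde{S}_l(f)$. Your remark on the case $\widetilde{S}_l(f)\notin X$ matches the paper's implicit reduction in Lemma \ref{af} (where one may assume the input lies in $T^1_X(\mathbb{R}^{n+1}_+)$), so nothing is missing.
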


\begin{proof}
By Lemma \ref{af}, we find that, for any $\alpha\in[1,\infty)$,
\begin{align*}
\lf\|(\widetilde{S}_\alpha)_l(f)\r\|_X&=\lf\|\ca_\alpha\lf(|\varphi(tD)(f)
|\mathbf{1}_{(0,1)}\r)\r\|_X\\
&\lesssim \max\lf\{\alpha^{(\frac{1}{2}-\frac1q)n},1\r\}
\alpha^{\frac n\theta}\lf\|\ca_1(|\varphi(tD)(f)|\mathbf{1}_{(0,1)})\r\|_X\\
&\sim\max\lf\{\alpha^{(\frac{1}{2}-\frac1q)n},1\r\}
\alpha^{\frac n\theta}\lf\|\widetilde{S}_l(f)\r\|_X,
\end{align*}
which completes the proof of Lemma \ref{10.10.1}.
\end{proof}

Now we prove Theorem \ref{lpl} by using Lemmas \ref{10.29.2}
through \ref{10.10.1}.

\begin{proof}[Proof of Theorem \ref{lpl}]
(i) of this theorem is obtained directly by Lemmas \ref{10.29.2} and \ref{10.9.5}.
Moreover, via replacing Lemma \ref{af} by Lemma \ref{10.10.1} and then repeating
the proof of Theorem \ref{g}, we can prove (ii) and (iii) of this theorem,
and the details are omitted here. This finishes the proof of Theorem \ref{lpl}.
\end{proof}

\noindent\textbf{Acknowledgements}.
The first and the third authors would like to thank
Ciqiang Zhuo, Ziyi He and Yangyang Zhang for many helpful discussions
on the subject of this article. Moreover, the authors would
like to thank Yoshihiro Sawano and Kwok Pun Ho
for several helpful discussions on the subject of this article and they
would also like to thank the referee for her/his very careful reading and useful
comments which indeed improved the quality of this article and, particularly, motivated the authors
to obtain Theorem \ref{ccz-new}, Remark \ref{morr-lw}, Theorems \ref{gcz-new} and \ref{pdo-new}.

\bigskip

\noindent Fan Wang and Dachun Yang (Corresponding author)

\medskip

\noindent Laboratory of Mathematics and Complex Systems (Ministry of Education of China),
School of Mathematical Sciences, Beijing Normal University,
Beijing 100875, People's Republic of China

\smallskip

\noindent{\it E-mails:} \texttt{fanwang@mail.bnu.edu.cn} (F. Wang)

\noindent\phantom{{\it E-mails:} }\texttt{dcyang@bnu.edu.cn} (D. Yang)

\bigskip

\noindent Sibei Yang

\medskip

\noindent School of Mathematics and Statistics, Gansu Key Laboratory
of Applied Mathematics and Complex Systems, Lanzhou University,
Lanzhou 730000, People's Republic of China

\smallskip

\noindent{\it E-mail:} \texttt{yangsb@lzu.edu.cn}

\end{document}